\documentclass[12pt,reqno]{amsart}
\usepackage{amsmath,mathtools,enumerate}
\usepackage{tikz}
\usepackage[all]{xy}
\usetikzlibrary{decorations.markings}
\usepackage{xcolor}
\usepackage{amsthm}
\usepackage{amsfonts}
\usepackage{amssymb}
\usepackage{setspace}
\usepackage{youngtab}
\usepackage{ytableau}
\usepackage{empheq}
\usepackage[bookmarks,colorlinks,breaklinks]{hyperref}  
\hypersetup{linkcolor=blue,citecolor=red,filecolor=blue,urlcolor=blue} 
\theoremstyle{plain}

\oddsidemargin=0in
\evensidemargin=0in
\textwidth=6.50in             
\definecolor{gr40}{gray}{0.40}

\headheight=10pt
\headsep=10pt
\topmargin=.5in
\textheight=8in

\newcommand{\beqn}{\begin{eqnarray}}
\newcommand{\eeqn}{\end{eqnarray}}

\newtheorem{Theorem}{Theorem}[section]
\newtheorem{Proposition}[Theorem]{Proposition}
\newtheorem{Definition}[Theorem]{Definition}
\newtheorem{Example}[Theorem]{Example}
\newtheorem{Corollary}[Theorem]{Corollary}
\newtheorem{Lemma}[Theorem]{Lemma}
\newtheorem{Remark}[Theorem]{Remark}

\newtheorem{Algorithm}[Theorem]{Algorithm}

\newcommand{\cskew}{/\!\! /}
\newcommand{\evac}{\mathrm{evac}}
\newcommand{\rect}{\mathrm{rect}}
\newcommand{\tjdt}{\mathrm{tjdt}}
\newcommand{\jdt}{\mathrm{jdt}}

\newcommand{\comp}{\mathrm{comp}}
\newcommand{\set}{\mathrm{set}}
\newcommand{\des}{\mathrm{Des}}
\newcommand{\inv}{\mathrm{Inv}}
\newcommand{\sha}{\mathrm{sh}}
\newcommand{\std}{\mathrm{std}}
\newcommand{\ctau}{\tau}
\newcommand{\ctaup}{\bar{\tau}} 
\newcommand{\ncsa}{\mathbf{s}_{\alpha}}
\newcommand{\ncsb}{\mathbf{s}_{\beta}}
\newcommand{\ncsg}{\mathbf{s}_{\gamma}}
\newcommand{\nch}{\mathbf{h}}
\newcommand{\rib}{\mathbf{r}} 
\newcommand{\ncs}{\mathbf{s}} 
\newcommand{\up}{u} 
\newcommand{\down}{\mathfrak{d}} 
\newcommand{\downrow}{\mathfrak{v}}


\definecolor{myblue}{rgb}{0.4, 0.5, 0.7}

\newcommand{\tc}{\textcolor}
\newcommand{\tcr}{\tc{red}}
\newcommand{\tcb}{\tc{blue}}
\definecolor{myblue}{rgb}{0.6, 0.7, 1}
\definecolor{anotherblue}{rgb}{0.3,0.3,1}
\definecolor{mygreen}{rgb}{0.3,0.5,0}
\definecolor{myred}{rgb}{0.8,0.3,0.1}
\newcommand*\mybluebox[1]{%
    \colorbox{myblue}{\hspace{1em}#1\hspace{1em}}}
    
\newcommand{\SRT}{\ensuremath{\operatorname{SRT}}}
\newcommand{\SSRT}{\ensuremath{\operatorname{SSRT}}}
\newcommand{\SRCT}{\ensuremath{\operatorname{SRCT}}}
\newcommand{\SSRCT}{\ensuremath{\operatorname{SSRCT}}}

\begin{document}
\title[Backward jdt slides and right Pieri rule]{Backward jeu de taquin slides for composition tableaux and a noncommutative Pieri rule}
\author{VASU V. TEWARI}
\address{
Vasu V. Tewari\\
Department of Mathematics\\
University of British Columbia\\
Vancouver, BC V6T 1Z2\\
Canada
}
\email{\href{mailto:vasu@math.ubc.ca}{vasu@math.ubc.ca}}
\subjclass[2010]{Primary 05E05, 20C30}
\keywords{Jeu de taquin, noncommutative symmetric function, Schur function, tableau, Pieri rule}

\begin{abstract}
We give a backward jeu de taquin slide analogue on semistandard reverse composition tableaux. These tableaux were first studied by Haglund, Luoto, Mason and van Willigenburg when defining quasisymmetric Schur functions. Our algorithm for performing backward jeu de taquin slides on semistandard reverse composition tableaux results in a natural operator on compositions that we call the jdt operator. This operator in turn gives rise to a new poset structure on compositions whose maximal chains we enumerate. As an application, we also give a noncommutative Pieri rule for noncommutative Schur functions that uses the jdt operators.
\end{abstract}
\maketitle
\tableofcontents
\section{Introduction}
In this article we introduce an analogue of a combinatorial procedure, called jeu de taquin, for semistandard reverse composition tableaux and study the implications in the setting of noncommutative symmetric functions. Classically, jeu de taquin, literally the `teasing game', is a set of sliding rules defined on semistandard Young tableaux (or semistandard reverse tableaux) that preserves the property of being semistandard while changing the underlying shape. This procedure is of utmost importance in algebraic combinatorics, representation theory and algebraic geometry. It was introduced by Sch\"{u}tzenberger \cite{Schutzenberger}, and its many remarkable properties have been studied in depth since \cite{fulton-1, sagan, stanley-ec2}. The procedure plays a crucial role in the combinatorics of permutations and Young tableaux, with deep links to the Robinson-Schensted algorithm and the plactic monoid. One of the most important applications of Sch\"{u}tzenberger's jeu de taquin is in giving a combinatorial rule for computing the tensor product of two representations of the symmetric group, commonly called the Littlewood-Richardson rule.

Analogues of jeu de taquin have been found for shifted tableaux \cite{Haiman,Sagan-2}, Littelmann's crystal paths \cite{vanLeeuwen}, increasing tableaux \cite{TY-1}, edge labeled Young tableaux \cite{TY-2} and d-complete posets \cite{Proctor,RN}. Procedures such as promotion and evacuation, that are consequences of jeu de taquin and important in their own right, have also been studied \cite{PonWang,Stanley-1}. Recently, an infinite version of jeu de taquin has been studied for infinite Young tableaux \cite{sniady}.

Our focus here is on semistandard reverse composition tableaux. These tableaux arise naturally in the work of Haglund, Luoto, Mason and van Willigenburg \cite{HLMvW} for defining a distinguished basis of the Hopf algebra of quasisymmetric functions, called the basis of quasisymmetric Schur functions. These functions are indexed by compositions and are obtained from a certain specialization of nonsymmetric Macdonald polynomials \cite{HHL}. The dual basis elements corresponding to them, which belong to the Hopf algebra of noncommutative symmetric functions, are noncommutative Schur functions introduced in \cite{BLvW}. These functions are noncommutative lifts of the classical Schur functions, and resemble them by possessing properties that are noncommutative analogues of Pieri rules, Kostka numbers, and Littlewood-Richardson rule. 

A special case of the noncommutative Littlewood-Richardson rule proved in \cite{BLvW} is a noncommutative analogue of the classical Pieri rules. These correspond to expanding the product of noncommutative Schur functions $\ncs_{(n)}\cdot \ncs_{\alpha}$ and $\ncs_{(1,\ldots,1)}\cdot \ncsa$ in the basis of noncommutative Schur functions, where $n$ denotes a positive integer and $\alpha$ denotes a composition. Given that we are computing products in a noncommutative Hopf algebra, it is natural to consider the products $\ncs_{\alpha}\cdot \ncs_{(n)}$ and $\ncs_{\alpha}\cdot \ncs_{(1,\ldots,1)}$. The statement of the noncommutative Littlewood-Richardson rule in \cite{BLvW}, which allows us to compute the aforementioned products, requires rectification of standard reverse tableaux and a map defined by Mason in \cite{Mason-rho}, denoted by $\rho$, that links semistandard reverse tableaux and semistandard reverse composition tableaux. One could ask whether it is possible to eliminate the use of Mason's map and introduce an analogue of rectification for semistandard reverse composition tableaux. This was achieved by Bechard in \cite{Bechard}. For what we aspire to compute, it is preferable to consider a procedure that is the inverse of Bechard's rectification.

The outline of this article is as follows. After reminding the reader of some classical objects from the theory of symmetric functions in Section \ref{section:background sym}, we proceed to describe the noncommutative analogues of the same in the algebra of noncommutative symmetric functions, in Section \ref{section: background nsym}. In the same section, we introduce the distinguished basis of noncommutative Schur functions after introducing semistandard reverse composition tableaux. We introduce certain operators on compositions, namely box-removing operators and jdt operators, in Section \ref{section: operators on compositions}. We state our two main results in Section \ref{section: Results}. First, the procedure for executing the backward jeu de taquin slides on semistandard reverse composition tableaux is outlined using Algorithms \ref{results algorithm1} and \ref{results algorithm2}. Second, a multiplicity-free expansion for the products $\ncs_{\alpha}\cdot \ncs_{(n)}$ and $\ncs_{\alpha}\cdot \ncs_{(1,\ldots,1)}$ in terms of the noncommutative Schur functions, namely the right Pieri rule, is stated in Theorem \ref{theorem: right pieri rules}. We then proceed to give the proofs of validity of the algorithms described earlier in Section \ref{section: backward jdt slides straight shape} and the connection is made precise in Theorem \ref{backwardjdtslideonssrct}. Next, in Subsection \ref{section: jdt operators}, we study the effect of the slides on underlying composition shapes using jdt operators. In Section \ref{section: slides for skew shape}, we describe a procedure to perform backward jeu de taquin slides on skew semistandard reverse composition tableaux. Using the jdt operators, in Section \ref{section: new poset} we endow the set of compositions with a new poset structure $\mathcal{R}_c$. Finally, in Section \ref{section: right pieri rule}, we prove the right Pieri rule in Theorem \ref{theorem: right pieri rules actual proof} and enumerate the maximal chains in $\mathcal{R}_c$ in Theorem \ref{theorem: generalized counting maximal chains in right pieri poset}.

\section*{Acknowledgements}
The author would like to thank Stephanie van Willigenburg for helpful guidance. He also wishes to thank Melissa Bechard, Matthieu Josuat-Verg\`{e}s, Sarah Mason, and Ed Richmond for helpful conversations.
\section{Backgroung on symmetric functions}\label{section:background sym}
We will start by defining some combinatorial structures that we will encounter frequently. All the notions introduced in this section are covered in detail in \cite{macdonald-1,sagan,stanley-ec2}. 
\subsection{Partitions}
A \emph{partition} $\lambda$ is a finite list of positive integers $(\lambda_1,\ldots,\lambda_k)$ satisfying $\lambda_1\geq \lambda_2\geq \cdots\geq \lambda_k$. The integers appearing in the list are called the \emph{parts} of the partition.

Given a partition $\lambda=(\lambda_1,\ldots,\lambda_k)$, the \textit{size} $\lvert \lambda\rvert$ is defined to be $\sum_{i=1}^{k}\lambda_i$. The number of parts of $\lambda$ is called its \textit{length}, and is denoted by $l(\lambda)$. If $\lambda$ is a partition satisfying $\lvert \lambda\rvert=n$, then we denote this by $\lambda \vdash n$. By convention, there is a unique partition of size and length equalling $0$, and we denote it by $\varnothing$.

We will depict a partition using its \textit{Young diagram}. Given a partition $\lambda=(\lambda_1,\ldots,\lambda_k)\vdash n$, the Young diagram of $\lambda$, also denoted by $\lambda$, is the left-justified array of $n$ boxes, with $\lambda_i$ boxes in the $i$-th row. We will be using the French convention where the rows are numbered from bottom to top and the columns from left to right. We refer to the box in the $i$-th row and $j$-th column by the ordered pair $(i,j)$. If $\lambda$ and $\mu$ are partitions such that $\mu \subseteq \lambda$, that is, $l(\mu) \leq l(\lambda)$ and $\mu_{i} \leq \lambda_{i}$ for all $i=1,2,\ldots,l(\mu)$, then the \textit{skew shape} $\lambda / \mu$ is obtained by removing the first $\mu_{i}$ boxes from the $i$-th row of the Young diagram of $\lambda$ for $1\leq i\leq l(\mu)$. Given a skew shape $\lambda/\mu$, we call $\mu$ the \textit{inner shape} and $\lambda$ the \textit{outer shape}. We refer to $\lambda$ as the \textit{outer shape} and to $\mu$ as the \textit{inner shape}. If the inner shape is $\varnothing$, instead of writing $\lambda/\varnothing$, we just write $\lambda$ and call $\lambda$ a \textit{straight shape}. The \emph{size} of a skew shape $\lambda/ \mu$, denoted by $\vert \lambda/\mu\vert$, is the number of boxes in the skew shape, which is $\vert \lambda\vert-\vert\mu\vert$.

Next, we define addable nodes and removable nodes of a partition. An \textit{addable node} of a partition $\lambda$ is a position $(i,j)$ where a box can be appended to a part of $\lambda$ so that the resulting shape is still a partition. Note that the addable nodes of a partition do not belong to the partition. On the other hand, a \textit{removable node} of $\lambda$ is a position $(i,j)$ where a box can be removed from a part of $\lambda$ so that the resulting shape is still a partition. Unlike addable nodes, removable nodes of a partition belong to the partition. Furthermore, given a partition, both addable nodes and removable nodes are uniquely determined by the column to which they belong. In the definitions just given, we have identified the partition $\lambda$ with its Young diagram, also called $\lambda$.

We will define a lattice structure on the set of partitions, called \textit{Young's lattice}, as follows. If $\mu$ is a partition obtained by adding a box at an addable node of the partition $\lambda$, we say that $\mu$ covers $\lambda$ and we denote it by $\lambda \prec \mu$. The partial order obtained by taking the transitive closure of $\prec$ allows to endow the set of partitions with the structure of a lattice, that we will denote $\mathcal{Y}$.
\begin{Example}
The partition $\lambda = (3,2,2,1)$ is represented by the following Ferrers diagram.
\begin{eqnarray*}
\ydiagram{1,2,2,3}
\end{eqnarray*}
The box in the lower left corner is indexed by the pair $(1,1)$. The addable nodes of $\lambda$ are $(1,4)$, $(2,3)$, $(4,2)$ and $(5,1)$ while the removable nodes are $(1,3)$, $(3,2)$ and $(4,1)$.
Also, the partition $\mu=(3,3,2,1)$ covers $\lambda$ in $\mathcal{Y}$, that is, $(3,2,2,1)\prec (3,3,2,1)$.
\end{Example}

\subsection{Semistandard reverse tableaux}
In this subsection, we will introduce certain classical objects that play a central role in the theory of symmetric functions.
\begin{Definition}\label{def:SSRT}
Given a skew shape $\lambda/\mu$, a \emph{semistandard reverse tableau ($\SSRT$)} $T$ of \emph{shape} $\lambda/\mu$ is a filling of the boxes of $\lambda/\mu$ with positive integers, satisfying the condition that the entries in $T$ are weakly decreasing along each row read from left to right and strictly decreasing along each column read from bottom to top.
\end{Definition}
We will denote the set of all $\SSRT$s of shape $\lambda/\mu$ by $\SSRT(\lambda/\mu)$. Given an $\SSRT$ $T$, the entry in box $(i,j)$ is denoted by $T_{(i,j)}$. The shape underlying $T$ is denoted by $\sha(T)$.

A \textit{standard reverse tableau} ($\SRT$) $T$ of shape $\lambda/\mu$ is an $\SSRT$ that contains every positive integer in $[|\lambda/\mu|]=\{1,2,\ldots,|\lambda/\mu|\}$ exactly once. 

The \textit{column reading word} of an $\SSRT$ is the word obtained by reading the entries of every column in increasing order from left to right.

Let $\{x_1,x_2,\ldots\}$ be an alphabet comprising of countably many commuting indeterminates. Given any $\SSRT$ $T$ of shape $\lambda\vdash n$, we will associate a monomial $x^{T}$ with it as follows.
\begin{eqnarray*}
x^{T}=\prod_{(i,j)\in\lambda}x_{T_{(i,j)}}
\end{eqnarray*}
\begin{Example}
Shown below are an $\SSRT$ $T$ of shape $(4,3,3,1)$ and its associated monomial.
\begin{eqnarray*}
T=\ytableausetup{mathmode,boxsize=1em}
\begin{ytableau}
1\\3&3&2\\6&5&4\\7&7&6&3
\end{ytableau} \hspace{6mm} 
x^{T}=x_7^2x_6^2x_5x_4x_3^3x_2x_1
\end{eqnarray*}
The column reading word of $T$ is $1367 \text{ } 357 \text{ }246\text{ }3$.
\end{Example}

\subsection{Symmetric functions}
The algebra of symmetric functions, denoted by $\Lambda$, is the algebra freely generated over $\mathbb{Q}$ by countably many commuting variables $\{h_1,h_2,\ldots\}$. Assigning degree $i$ to $h_i$ (and then extending this multiplicatively) allows us to endow $\Lambda$ with the structure of a graded algebra. A basis for the degree $n$ component of $\Lambda$, denoted by $\Lambda^{n}$, is given by the \textit{complete homogeneous symmetric functions} of degree $n$, $$\{h_{\lambda}=h_{\lambda_1}\cdots h_{\lambda_k}: \lambda=(\lambda_1,\ldots,\lambda_k)\vdash  n\}.$$

A concrete realization of $\Lambda$ is obtained by embedding $\Lambda=\mathbb{Q}[h_1,h_2,\ldots]$ in $\mathbb{Q}[[x_1,x_2,\ldots]]$, that is, the ring of formal power series in countably many commuting indeterminates $\{x_1,x_2,\ldots\}$, under the identification (extended multiplicatively)
$$h_i \longmapsto \text{ sum of all distinct monomials in $x_1,x_2,\ldots$ of degree $i$}.$$
This viewpoint allows us to consider symmetric functions as formal power series $f$ in the $x$ variables with the property that $f(x_{\pi(1)},x_{\pi(2)},\ldots)=f(x_{1},x_{2},\ldots)$ for every permutation $\pi$ of the positive integers $\mathbb{N}$.

The \textit{Schur function} indexed by the partition $\lambda$, $s_{\lambda}$, is defined as follows.
\begin{eqnarray*}
s_{\lambda}=\displaystyle\sum_{T\in \SSRT(\lambda)}x^{T}
\end{eqnarray*}
While not immediate from the definition above, it is a fact that $s_{\lambda}$ is a symmetric function. Additionally, the elements of the set $\{s_{\lambda}:\lambda\vdash n\}$ form a basis of $\Lambda ^{n}$ for any positive integer $n$.

\subsection{Jeu de taquin slides on SSRTs}
In this subsection, we will describe backward jeu de taquin slides in the setting of semistandard reverse tableaux. The exposition here follows that in \cite{sagan} analogously.

Let $\lambda \vdash n$ and $T\in SSRT(\lambda)$. Given $i_0,j_0\geq 1$ such that $\lambda$ has an addable node in position $c=(i_0,j_0)$, a \textit{backward jeu de taquin slide} on $T$ starting from $c$ gives an $\SSRT$ $\jdt_{j_0} (T)$ in the manner outlined below.
\begin{enumerate}
\item Let $c=(i_0,j_0)$.
\item \textbf{While} $c$ is not equal to $(1,1)$ \textbf{do}\begin{itemize}\item If $c=(i,j)$, then let $c'$ be the box of $min(T_{(i-1,j)},T_{(i,j-1)})$. If only one of $T_{(i-1,j)}$ and $T_{(i,j-1)}$ exists then the minimum is taken to be that single value. Furthermore, if $T_{(i-1,j)}=T_{(i,j-1)}$, then $c'=(i-1,j)$.
\item Slide $T_{c'}$ into position $c$. Let $c:=c'$.
\end{itemize}
\item The final skew $\SSRT$ obtained is $\jdt_{j_0}(T)$.
\end{enumerate}

We give an example demonstrating the above algorithm.
\begin{Example}
Let $\lambda = (3,2,2,1)$ and let $T\in SSRT(\lambda)$ be
$\ytableausetup{mathmode,boxsize=1em}
\begin{ytableau}
2\\5 & 4\\ 6 & 5\\8 & 7 &1
\end{ytableau}$.

If we start a backward jeu de taquin slide from the addable node given by $c=(2,3)$, then the algorithm executes in the following manner.
\begin{eqnarray*}
\ytableausetup{mathmode,boxsize=1em}
\begin{ytableau}
2\\5 & 4\\ 6 & 5 & \bullet \\8 & 7 &1
\end{ytableau}
\rightarrow
\begin{ytableau}
2\\5 & 4\\ 6 & 5 & 1 \\8 & 7 & \bullet
\end{ytableau}
\rightarrow
\begin{ytableau}
2\\5 & 4\\ 6 & 5 & 1 \\8 & \bullet &7 
\end{ytableau}
\rightarrow
\begin{ytableau}
2\\5 & 4\\ 6 & 5 & 1 \\ \bullet  & 8 &7 
\end{ytableau}
\end{eqnarray*}
Thus, we have that
\begin{eqnarray*}
\emph{jdt}_{3}(T)&=&\ytableausetup{mathmode,boxsize=1em}\begin{ytableau}
2\\5 & 4\\ 6 & 5 & 1 \\ \none & 8 &7 
\end{ytableau}.
\end{eqnarray*}
\end{Example}

The above algorithm can be generalized naturally to $\SSRT$ of skew shape. Now let $T$ denote an $\SSRT$ of skew shape $\lambda /\mu$. Suppose $i_0,j_0\geq 1$ are such that the position $c=(i_0,j_0)$ is an addable node of $\lambda$. Then $\jdt _{j_0}(T)$ is obtained as outlined below.

\begin{enumerate}
\item Let $c=(i_0,j_0)$.
\item \textbf{While} $c$ is not an addable node of $\mu$ \textbf{do}\begin{itemize}\item If $c=(i,j)$, then let $c'$ be the box of $min(T_{(i-1,j)},T_{(i,j-1)})$. If only one of $T_{(i-1,j)}$ and $T_{(i,j-1)}$ exists then the minimum is taken to be that single value. Furthermore, if $T_{(i-1,j)}=T_{(i,j-1)}$, then $c'=(i-1,j)$.
\item Slide $T_{c'}$ into position $c$. Let $c:=c'$.
\end{itemize}
\item The final skew $\SSRT$ obtained is $\jdt_{j_0}(T)$.
\end{enumerate}

\begin{Example}\label{example: backward jdt slide on skew SSRT}
Consider the $\SSRT$ $T$ of shape $(5,4,3,1)/ (3,2)$ as shown below.
\begin{eqnarray*}
\ytableausetup{mathmode,boxsize=1em}
\begin{ytableau}
1\\3&1&1\\ \bullet & \bullet & 2 &1\\ \bullet & \bullet & \bullet &4 &3
\end{ytableau}
\end{eqnarray*}
Suppose we want to compute $\jdt_{4}(T)$. Then the following sequence of sliding moves occurs, where the box shaded red denotes the box into which entries slide.
\begin{eqnarray*}
\ytableausetup{mathmode,boxsize=1em}
\begin{ytableau}
1\\3&1&1 & *(red)\bullet\\ \bullet & \bullet & 2 &1\\ \bullet & \bullet & \bullet &4 &3
\end{ytableau}
\longrightarrow
\begin{ytableau}
1\\3&1&1 & 1\\ \bullet & \bullet & 2 & *(red)\bullet \\ \bullet & \bullet & \bullet &4 &3
\end{ytableau}
\longrightarrow
\begin{ytableau}
1\\3&1&1 & 1\\ \bullet & \bullet & *(red)\bullet & 2 \\ \bullet & \bullet & \bullet &4 &3
\end{ytableau}
\end{eqnarray*}
Thus, we have
\begin{eqnarray*}
\emph{jdt}_{4}(T)&=&\ytableausetup{mathmode,boxsize=1em}\begin{ytableau}
1\\3&1&1 & 1\\ \none & \none & \none & 2 \\ \none & \none & \none &4 &3
\end{ytableau}.
\end{eqnarray*}
\end{Example}

For completeness, we will discuss the \textit{forward jeu de taquin} slide next. It is the inverse of the backward jeu de taquin slide discussed above. Let $T$ denote an $\SSRT$ of skew shape $\lambda /\mu$. Suppose $i_0,j_0\geq 1$ are such that the position $c=(i_0,j_0)$ is a removable node of $\mu$. Then the $\SSRT$ obtained after a forward jeu de taquin slide is initiated from position $c$, which we denote by $\jdt^{f} _{j_0}(T)$, is obtained as outlined below.

\begin{enumerate}
\item Let $c=(i_0,j_0)$.
\item \textbf{While} $c$ is not an addable node of $\lambda$ \textbf{do}\begin{itemize}\item If $c=(i,j)$, then let $c'$ be the box of $max(T_{(i+1,j)},T_{(i,j+1)})$. If only one of $T_{(i+1,j)}$ and $T_{(i,j+1)}$ exists then the maximum is taken to be that single value. Furthermore, if $T_{(i+1,j)}=T_{(i,j+1)}$, then $c'=(i+1,j)$.
\item Slide $T_{c'}$ into position $c$. Let $c:=c'$.
\end{itemize}
\item The final $\SSRT$ obtained is $\jdt^{f}_{j_0}(T)$.
\end{enumerate}
\begin{Example}
Let $T$ denote the $\SSRT$ below.
\begin{eqnarray*}
\ytableausetup{mathmode,boxsize=1em}\begin{ytableau}
1\\3&1&1 & 1\\ \bullet & \bullet & \bullet & 2 \\ \bullet & \bullet & \bullet &4 &3
\end{ytableau}
\end{eqnarray*}
Then $\jdt^{f}_{3}(T)$ is obtained by reversing the steps in Example \ref{example: backward jdt slide on skew SSRT} and we get that
\begin{eqnarray*}
\ytableausetup{mathmode,boxsize=1em}
\jdt^{f}_{3}(T)=\begin{ytableau}
1\\3&1&1\\ \bullet & \bullet & 2 &1\\ \bullet & \bullet & \bullet &4 &3
\end{ytableau}.
\end{eqnarray*}
\end{Example}

\subsection{Rectification of SRTs}\label{subsection: vrsk}
A procedure that is closely tied to the jeu de taquin slide is rectification. To describe it in a way that is convenient for us, we need to introduce a slight variant of the Robinson- Schensted algorithm. 

A discussion of the original algorithm is present in the appendix. Our variant establishes a bijection between permutations in $\mathfrak{S}_n$ and pairs of $\SRT$s of the same shape. We will need the notion of a \textit{partial tableau} that we define to be an $\SSRT$ with distinct entries. Furthemore, we will abuse notation and use $\varnothing$ to denote the empty tableau.

We will outline the algorithm next. Our input is a permutation $\sigma\in \mathfrak{S}_n$, and the output is a pair of $\SRT$s of the same shape and size $n$.
\begin{Algorithm}[Variant of RS insertion algorithm]\label{algorithm: variant RS algorithm}
\hfill
\begin{enumerate}
\item Let $P_{0}=Q_{0}=\varnothing$.
\item \textbf{For} $i=1$ to $n$, let $y\coloneqq \sigma(i)$ and \textbf{do}
\begin{enumerate}[(a)]
\item Set $r\coloneqq$ first row of $P_{i-1}$.
\item \textbf{While} $y$ is greater than some number in row $r$, \textbf{do}
\begin{itemize}
\item Let $x$ be the greatest number smaller than $y$ and replace $x$ by $y$ in $P_{i-1}$.
\item Set $y\coloneqq x$ and $r\coloneqq$ the next row.
\end{itemize}
\item Now that $y$ is smaller than every number in row $r$, place $y$ at the end of row $r$ and call the resulting partial tableau $P_{i}$. If $(a,b)$ is where $y$ got placed, the partial tableau $Q_{i}$ is obtained by placing $n-i+1$ in position $(a,b)$ in $Q_{i-1}$. 
\end{enumerate}
\item Finally, set $P\coloneqq P_{n}$ and $Q\coloneqq Q_{n}$.
\end{enumerate}
\end{Algorithm}
The $\SRT$s $P$ and $Q$ thus obtained are called the \textit{insertion tableau} and \textit{recording tableau} respectively. We present an example next.
\begin{Example}\label{example: insertion variant RSK}
Consider the permutation $\sigma = 3157624$ in one line notation. Then the insertion algorithm works as shown below, where at each stage we note the pair $(P_i,Q_i)$.
\begin{align*}
&(\varnothing,\varnothing)\mapsto
\left(\ytableausetup{mathmode,boxsize=1em}
\begin{ytableau}
3
\end{ytableau}
,
\begin{ytableau}
7
\end{ytableau}\right)\mapsto
\left(\ytableausetup{mathmode,boxsize=1em}
\begin{ytableau}
3 & 1
\end{ytableau}
,
\begin{ytableau}
7&6
\end{ytableau}\right)\mapsto
\left(\ytableausetup{mathmode,boxsize=1em}
\begin{ytableau}
3\\5 & 1
\end{ytableau}
,
\begin{ytableau}
5\\7&6
\end{ytableau}\right)\mapsto
\left(\ytableausetup{mathmode,boxsize=1em}
\begin{ytableau}
3\\5\\7 & 1
\end{ytableau}
,
\begin{ytableau}
4\\5\\7&6
\end{ytableau}\right)\mapsto
\left(\ytableausetup{mathmode,boxsize=1em}
\begin{ytableau}
3\\5 & 1\\7 & 6
\end{ytableau}
,
\begin{ytableau}
4\\5&3\\7&6
\end{ytableau}\right)\\&\mapsto
\left(\ytableausetup{mathmode,boxsize=1em}
\begin{ytableau}
3\\5 & 1\\7 & 6 & 2
\end{ytableau}
,
\begin{ytableau}
4\\5&3\\7&6&2
\end{ytableau}\right)\mapsto 
\left(\ytableausetup{mathmode,boxsize=1em}
\begin{ytableau}
3 &1\\5 & 2\\7 & 6 & 4
\end{ytableau}
,
\begin{ytableau}
4&1\\5&3\\7&6&2
\end{ytableau}\right)
\end{align*}
\end{Example} 

Now, given an $\SRT$ $T$, let $w_T$ denote its column reading word. Note that $w_T$ is also a permutation written in single line notation. The \textit{rectification} of $T$, denoted $\rect(T)$ is defined to be $P(w_T)$, that is, the insertion tableau obtained on performing the above mentioned variant of Robinson-Schensted algorithm on $w_T$.
\begin{Example}
Consider the $\SRT$ $T$ of skew shape given below.
$$
\ytableausetup{mathmode,boxsize=1em}
\begin{ytableau}
3&1\\ \none & 5\\ \none & 7&6 &2\\ \none & \none &\none & 4
\end{ytableau}
$$
Then the column reading word is $3 \text{ } 157 \text{ } 6\text{ } 24$. The insertion tableau for this reading word has already been computed in Example \ref{example: insertion variant RSK}, and thus $\rect(T)$ is as follows.
$$
\ytableausetup{mathmode, boxsize=1em}
\begin{ytableau}
3 &1\\5 & 2\\7 & 6 & 4
\end{ytableau}
$$
\end{Example}
An important property of rectification that will prove crucial for us is the fact that it behaves nicely with respect to jeu de taquin slides. More precisely, if $T'$ is some $\SRT$ obtained by performing a backward jeu de taquin slide on some $\SRT$ $T$, then $\rect(T)=\rect(T')$. Rectification also allows us to describe another important combinatorial procedure called evacuation which we describe next.

Given an $\SRT$ $T$, the \textit{evacuation} action on $T$, denoted by $e(T)$, is obtained using the algorithm below.
\begin{Algorithm}[Evacuation]\label{algorithm:evacuation}
\hfill
\begin{enumerate}
\item Let $U$ be a Ferrers diagram of shape $\lambda$, where $\lambda = sh(T) \vdash n$.
\item Let the entry in the box in position $(1,1)$ in $T$ be $a$. Remove this entry, and compute the rectification of the remaining $\SRT$ of skew shape. Equivalently, one can remove the entry in the box in position $(1,1)$ in $T$ and let $T'$ be the resulting skew $\SRT$, and compute $\jdt^{f}_{1}(T')$. This viewpoint makes it clear that the underlying shape of the rectified tableau is obtained by removing a removable node from $\lambda$. Let this node be in position $(i,j)$.
\item To the box given by $(i,j)$ in $U$, assign the entry $n-a+1$.
\item Repeat the previous two steps until all the boxes in $U$ are filled. Finally, $e(T)=U$.
\end{enumerate}
\end{Algorithm}
\begin{Example}
Consider the insertion tableau from Example \ref{example: insertion variant RSK}. We demonstrate Algorithm \ref{algorithm:evacuation} by drawing the tableau $T$ and the partially filled $U$ at each step.
\begin{align*}
&\ytableausetup{mathmode,boxsize=1em}
\begin{ytableau}
3&1\\5&2\\7&6&4
\end{ytableau},
\begin{ytableau}
*(white) & *(white)\\*(white) & *(white)\\*(white) & *(white) & *(white)  
\end{ytableau} \mapsto 
\begin{ytableau}
3&1\\5&2\\6&4
\end{ytableau},
\begin{ytableau}
*(white) & *(white)\\*(white) & *(white)\\*(white) & *(white) & 1
\end{ytableau}\mapsto
\begin{ytableau}
1\\3&2\\5&4
\end{ytableau},
\begin{ytableau}
*(white) & 2\\*(white) & *(white)\\*(white) & *(white) & 1
\end{ytableau}\mapsto
\begin{ytableau}
1\\3\\4&2
\end{ytableau},
\begin{ytableau}
*(white) & 2\\*(white) & 3\\*(white) & *(white) & 1
\end{ytableau}\mapsto
\begin{ytableau}
1\\3&2
\end{ytableau},
\begin{ytableau}
4 & 2\\*(white) & 3\\*(white) & *(white) & 1
\end{ytableau}\\&\mapsto
\begin{ytableau}
1\\2
\end{ytableau},
\begin{ytableau}
4 & 2\\*(white) & 3\\*(white) & 5 & 1
\end{ytableau}\mapsto
\begin{ytableau}
1
\end{ytableau},
\begin{ytableau}
4 & 2\\6 & 3\\*(white) & 5 & 1
\end{ytableau}\mapsto
\varnothing ,
\begin{ytableau}
4 & 2\\6 & 3\\7 & 5 & 1
\end{ytableau}
\end{align*}
Thus, $e(T)$ is the $\SRT$ obtained in the last step above.
\end{Example}
Next, we will note down a property of our variant of the Robinson-Schensted algorithm. The proof is presented in the Appendix.
\begin{Lemma}\label{lemma: basic properties of RSK}
Let $w\in \mathfrak{S}_n$ and suppose further that $w\mapsto (P,Q)$ using the variant of Robinson-Schensted algorithm outlined earlier. Then
\begin{align*}
w^{-1} \mapsto (e(Q),e(P)).
\end{align*}
\end{Lemma}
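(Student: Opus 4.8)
The plan is to reduce the statement to the classical symmetries of the ordinary Robinson--Schensted correspondence, by observing that Algorithm~\ref{algorithm: variant RS algorithm} is nothing but ordinary RS insertion performed with the linear order on $\{1,\ldots,n\}$ reversed. Let $\phi$ be the order-reversing involution $\phi(k)=n+1-k$, let $w_{0}$ denote the longest permutation (so $w_{0}=\phi$ as a permutation), and for $u\in\mathfrak{S}_{n}$ write $u^{c}=\phi\circ u$ for its complement. Since our variant bumps the \emph{greatest entry smaller} than $y$ rather than the smallest entry larger than $y$, a single insertion step of the variant agrees with a classical insertion step once the order is reversed; relabelling the alphabet by $\phi$ then converts reversed-order classical insertion into ordinary classical insertion. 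I would record this as the dictionary $P_{\mathrm{var}}(u)=\phi\bigl(P_{\mathrm{cl}}(u^{c})\bigr)$, and, because the variant records $n-i+1$ at step $i$ while classical RS records $i$, also $Q_{\mathrm{var}}(u)=\phi\bigl(Q_{\mathrm{cl}}(u^{c})\bigr)$, where $P_{\mathrm{cl}},Q_{\mathrm{cl}}$ are the ordinary insertion and recording tableaux and $\phi$ acts on a tableau by relabelling entries.

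Next I would establish the matching dictionary for evacuation: the evacuation $e$ of Algorithm~\ref{algorithm:evacuation}, defined on $\SRT$s via forward $\jdt$ slides, is the reversed-order version of classical evacuation $e_{\mathrm{cl}}$, so that $e(T)=\phi\bigl(e_{\mathrm{cl}}(\phi(T))\bigr)$ for every standard reverse tableau $T$. This holds because a forward $\jdt$ slide on a reverse tableau is exactly a classical slide carried out in the reversed order, hence commutes with the relabelling $\phi$, and the recording value $n-a+1$ in the evacuation algorithm is precisely the one compatible with $\phi$. With both dictionaries in hand the only external inputs are the standard symmetries of RS (see \cite{sagan,stanley-ec2}): the inverse symmetry $P_{\mathrm{cl}}(u^{-1})=Q_{\mathrm{cl}}(u)$ and $Q_{\mathrm{cl}}(u^{-1})=P_{\mathrm{cl}}(u)$; the complement symmetry $P_{\mathrm{cl}}(u^{c})=e_{\mathrm{cl}}(P_{\mathrm{cl}}(u))^{t}$ and $Q_{\mathrm{cl}}(u^{c})=Q_{\mathrm{cl}}(u)^{t}$; together with the facts that $e_{\mathrm{cl}}$ is an involution and commutes with transposition.

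To finish I would evaluate both sides of the claimed identity through these dictionaries. Applying the dictionary and then the inverse and complement symmetries to $w^{-1}$ gives $P_{\mathrm{var}}(w^{-1})=\phi\bigl(P_{\mathrm{cl}}((w^{-1})^{c})\bigr)=\phi\bigl(e_{\mathrm{cl}}(Q_{\mathrm{cl}}(w))^{t}\bigr)$ and $Q_{\mathrm{var}}(w^{-1})=\phi\bigl(Q_{\mathrm{cl}}((w^{-1})^{c})\bigr)=\phi\bigl(P_{\mathrm{cl}}(w)^{t}\bigr)$. On the other hand, the evacuation dictionary together with the complement symmetry yields $e(Q_{\mathrm{var}}(w))=\phi\bigl(e_{\mathrm{cl}}(Q_{\mathrm{cl}}(w)^{t})\bigr)$ and $e(P_{\mathrm{var}}(w))=\phi\bigl(e_{\mathrm{cl}}(e_{\mathrm{cl}}(P_{\mathrm{cl}}(w))^{t})\bigr)$. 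Because $e_{\mathrm{cl}}$ commutes with transposition and is an involution, the right-hand expressions simplify to $\phi\bigl(e_{\mathrm{cl}}(Q_{\mathrm{cl}}(w))^{t}\bigr)$ and $\phi\bigl(P_{\mathrm{cl}}(w)^{t}\bigr)$, so $P_{\mathrm{var}}(w^{-1})=e(Q)$ and $Q_{\mathrm{var}}(w^{-1})=e(P)$, which is the assertion.

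The genuinely new content, and the main obstacle, is justifying the two dictionaries rigorously; the concluding algebra is pure bookkeeping with classical theorems. Verifying $P_{\mathrm{var}}(u)=\phi(P_{\mathrm{cl}}(u^{c}))$ demands a careful step-by-step comparison of the two bumping routes under order reversal, and the evacuation dictionary requires confirming both that forward $\jdt$ on reverse tableaux matches classical slides after relabelling and that $n-a+1$ is the correct recording value. These are exactly the order-reversal compatibilities, and I expect the column-reading-word and $\rect$ material already developed in this section to render them routine. Alternatively, one can avoid passing to the classical setting altogether by mimicking the biword (growth-diagram) proof of RS symmetry directly in the reverse setting, with the decreasing recording convention accounting for the appearance of $e$.
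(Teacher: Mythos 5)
Your proposal is correct and follows essentially the same route as the paper's appendix proof: both pass to the classical Robinson--Schensted correspondence via complementation (your dictionary $P_{\mathrm{var}}(u)=\phi(P_{\mathrm{cl}}(u^{c}))$, $Q_{\mathrm{var}}(u)=\phi(Q_{\mathrm{cl}}(u^{c}))$ and the relation $e(T)=(\evac(T^{c}))^{c}$ are exactly the paper's observations), and both then finish with the classical inverse symmetry, Sch\"utzenberger's evacuation/complement theorem from \cite{stanley-ec2}, and the facts that $\evac$ is an involution commuting with transposition. The paper likewise states the two dictionaries as observations rather than proving them in detail, so your flagged ``main obstacle'' is treated at the same level of rigor there.
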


\section{Background on noncommutative symmetric functions}\label{section: background nsym}
\subsection{Compositions}
A \textit{composition} of $n$ is an ordered sequence of non-negative integers $\alpha = (\alpha_1,\ldots,\alpha_k)$ such that $\sum_{i=1}^{k}\alpha_{i}$ equals $n$. The integers $\alpha_i$ are called the \textit{parts} of $\alpha$. Given a composition $\alpha=(\alpha_1,\ldots,\alpha_k)$, the \textit{size} $\lvert \alpha\rvert$ is defined to be $\sum_{i=1}^{k}\alpha_i$. The number of parts of $\alpha$ is called its \textit{length}, and is denoted by $l(\alpha)$. As was the case with partitions, $\varnothing$ denotes the unique composition of size 0 and length 0. 

If all parts of a composition $\alpha$ are positive, then the composition is called a \textit{strong} composition. If there are parts that equal $0$, then we call $\alpha$ a \textit{weak} composition. The notation $\alpha \vDash n$ denotes that $\alpha$ is a composition of $n$. We will mainly be using strong compositions (and henceforth will drop the adjective strong). If we do require weak compositions, we will state this explicitly. Given a composition $\alpha$, we denote by $\widetilde{\alpha}$ the partition obtained by sorting the parts of $\alpha$ in weakly decreasing order.

A composition $\alpha=(\alpha_1,\ldots,\alpha_k)$ of $n$ will be represented pictorially by a \textit{reverse composition diagram} that is a left-justified array of $n$ boxes with $\alpha_{i}$ boxes in row $i$. Our labeling of rows will follow the English convention. Hence the rows are labeled from top to bottom and the columns from left to right.

There exists a bijection between compositions of $n$ and subsets of $[n-1]$ that we recall now. Given a composition $\alpha=(\alpha_1,\ldots,\alpha_k)\vDash n$, we can associate a subset of $[n-1]$, called $\set (\alpha)=\{\alpha_1,\alpha_1+\alpha_2,\ldots,\alpha_1+\cdots +\alpha_{k-1}\}$. In the opposite direction, given a set $S=\{i_1< \cdots  <i_j\}\subseteq [n-1]$, we can associate a composition of $n$, called $\comp (S)=(i_1,i_2-i_1,\ldots, i_j-i_{j-1},n-i_j)$.

Finally, we define the \textit{refinement order} on compositions. Given compositions $\alpha$ and $\beta$, we say that $\alpha \succcurlyeq \beta$ if one obtains parts of $\alpha$ in order by adding together adjacent parts of $\beta$ in order. The composition $\beta$ is said to be a \textit{refinement} of $\alpha$, or equivalently, $\alpha$ is said to be a \textit{coarsening} of $\beta$.  

\begin{Example}
Let $\alpha=(1,3,1,1,2)\vDash 8$. Then $l(\alpha)=5$ and $\set (\alpha)=\{1,4,5,6\}\subseteq [7]$. The reverse composition diagram of $\alpha$ is as shown.
\begin{eqnarray*}
\ydiagram{1,3,1,1,2}
\end{eqnarray*}
Also $(4,1,3)\succcurlyeq (1,3,1,1,2)$.
\end{Example}

\subsection{Semistandard reverse composition tableaux }\label{neededlater}
Let $\alpha = (\alpha_1,\ldots, \alpha_l)$ and $\beta$ be compositions. Define a cover relation, $\lessdot _{c}$, on compositions in the following way.
\begin{eqnarray*}
\alpha \lessdot_{c} \beta \text{ if and only if }\left\lbrace \begin{array}{ll} \beta= (1,\alpha_1,\ldots,\alpha_l) & \text{or}\\ \beta= (\alpha_1,\ldots,\alpha_k +1,\ldots,\alpha_l) & \text{and $\alpha_i\neq \alpha_k$ for all $i<k$} \end{array}\right.
\end{eqnarray*}
The \emph{reverse composition poset} $\mathcal{L}_{c}$ is the poset on compositions where the partial order $<_{c}$ is obtained by taking the transitive closure of the cover relation $\lessdot_{c}$ above.
If $\beta <_{c} \alpha$, the \textit{skew reverse composition shape} $\alpha \cskew \beta$ is defined to be the array of boxes
\begin{eqnarray*}
\alpha\cskew \beta = \{(i,j): (i,j)\in \alpha, (i,j)\notin \beta \}
\end{eqnarray*}
where $\beta$ is drawn in the bottom left corner of $\alpha$. We refer to $\alpha$ as the \textit{outer shape} and to $\beta$ as the \textit{inner shape}. If the inner shape is $\varnothing$, instead of writing $\alpha \cskew \varnothing$, we just write $\alpha$ and refer to $\alpha$ as a \textit{straight shape}. The \emph{size} of the skew reverse composition shape $\alpha\cskew \beta$, denoted by $\vert \alpha\cskew\beta\vert$, is the number of boxes in the skew reverse composition shape, that is, $\vert \alpha\vert-\vert\beta\vert$. Now, we will define a semistandard reverse composition tableau.
\begin{Definition}
A \emph{semistandard reverse composition tableau} ($\SSRCT$) $\tau$ of \emph{shape} $\alpha \cskew \beta$ is a filling 
\begin{eqnarray*}
\tau: \alpha\cskew \beta \longrightarrow \mathbb{Z}^{+}
\end{eqnarray*}
that satisfies the following conditions
\begin{enumerate}
\item the rows are weakly decreasing from left to right,
\item the entries in the first column are strictly increasing from top to bottom,
\item if $i< j$ and $(j,k+1) \in \alpha \cskew \beta$ and either $(i,k)\in \beta$ or $\tau(i,k) \geq \tau(j,k+1)$ then either $(i,k+1)\in \beta$ or both $(i,k+1) \in \alpha \cskew \beta $ and $\tau(i,k+1) > \tau(j,k+1)$.
\end{enumerate}
\end{Definition}
The shape underlying an $\SSRCT$ $\tau$ will be denoted by $\sha (\tau)$. We will denote the set of $\SSRCT$s of shape $\alpha\cskew \beta$ by $\SSRCT(\alpha\cskew \beta)$

When considering an $\SSRCT$, the boxes of the inner shape are filled with bullets. This given, the third condition in the definition above is equivalent to the non-existence of the following configurations in the filling $\tau$.
\begin{eqnarray*}
\ytableausetup{mathmode,boxsize=1.3em}
\begin{array}{lllllllllllll}
\begin{ytableau}
\bullet & y\\ \none & \none[\vdots]\\\none & z
\end{ytableau}& \text{ } & \text{ } & \text{ } &
\begin{ytableau}
\bullet \\ \none & \none[\vdots]\\\none & z
\end{ytableau}& \text{ } & \text{ } & \text{ } &
\begin{ytableau}
x & y\\ \none & \none[\vdots]\\\none & z
\end{ytableau}& \text{ } & \text{ } & \text{ } &
\begin{ytableau}
x \\ \none & \none[\vdots]\\\none & z
\end{ytableau}\\
z\geq y>0&  \text{ } & \text{ } & \text{ } &z>0&  \text{ } & \text{ } & \text{ } &  x \geq z\geq y>0 & \text{ } & \text{ } & \text{ } & x\geq z>0
\end{array}
\end{eqnarray*}
The existence of such a configuration in a filling will be termed a \textit{triple rule violation}.
Our strategy to show that a given filling is an $\SSRCT$ will be to show that there are no triple rule violations therein.

The \textit{column reading word} of an $\SSRCT$ $\tau$ is the word obtained by reading the entries in each column in increasing order, where we traverse the columns from left to right. We denote it by $w_{\tau}$.

A \emph{standard reverse composition tableau} (abbreviated to $\SRCT$) is an $\SSRCT$ in which the filling is a bijection $\tau: \alpha\cskew\beta \longrightarrow [|\alpha\cskew \beta|]$. That is, in an $\SRCT$ each number from the set $\{1,2,\ldots, |\alpha\cskew\beta|\}$ appears exactly once. 
\begin{Example}\label{exampleskewssrctstraightsrct}
An $\SSRCT$ of shape $(2,5,4,2)\cskew (2,1,2)$ (left) and an $\SRCT$ of shape $(3,4,2,3)$.
\begin{eqnarray*}
\ytableausetup{mathmode,boxsize=1.3em}
\begin{ytableau}
4 &3\\ \bullet & \bullet &7 &5 &1\\\bullet & 7 &6 &2\\ \bullet & \bullet
\end{ytableau}\hspace{15mm}
\begin{ytableau}
 6& 4& 1\\8& 7& 5& 2\\ 10& 3\\ 12& 11& 9
\end{ytableau}
\end{eqnarray*}
The column reading word of the $\SSRCT$ $\tau$ on the left is $w_{\tau}=4\hspace{2mm} 37\hspace{2mm}67\hspace{2mm}25\hspace{2mm}1$.
\end{Example}

Also associated with an $\SRCT$ is its \textit{descent set}. Given an $\SRCT$ $\ctau$ of shape $\alpha\vDash n$, its descent set, denoted by $\des (\ctau)$, is defined to be the set of all integers $i$ such that $i+1$ lies weakly to the right of $i$ in $\ctau$. Note that $\des (\ctau)$ is a subset of $[n-1]$. The \textit{descent composition} of $\ctau$, denoted by $\comp (\ctau)$, is the composition of $n$ associated with $\des (\ctau)$. As an example, the descent set of the $\SRCT$ in Example \ref{exampleskewssrctstraightsrct} is $\{1,3,4,6,8,10\}\subseteq [11]$. Hence the associated descent composition is $(1,2,1,2,2,2,2)\vDash 12$.

Given a composition $\alpha$, there exists a unique $\SRCT$ $\tau_{\alpha}$ so that the underlying shape of $\tau_{\alpha}$ is $\alpha$ and $\comp(\tau_{\alpha})=\alpha$. This tableau is called the \textit{canonical tableau} of shape $\alpha$. To construct $\tau_{\alpha}$ given $\alpha = (\alpha_1,\ldots,\alpha_k)$, place consecutive integers from $1+\sum_{i=1}^{r-1}\alpha_i$ to $\sum_{i=1}^{r}\alpha_i$ in that order from right to left in the $r$-th row from the top in the reverse composition diagram of $\alpha$ for $1\leq r\leq k$.

\begin{Example}\label{example: canonical tableau}
Let $\alpha=(3,4,2,3)$. Then $\tau_{\alpha}$ is as shown below.
$$
\ytableausetup{mathmode,boxsize=1.3em}
\begin{ytableau}
3&2&1\\7&6&5&4\\9&8\\12 &11 & 10
\end{ytableau}
$$
\end{Example}

Next, we discuss the relationship between $\SSRCT$s and $\SSRT$s. We will start by defining a generalization of the bijection defined by Mason in \cite{Mason-rho} (the generalization itself is presented in \cite{LMvW}). Let $\SSRCT(-\cskew\alpha)$ denote the set of all $\SSRCT$s with inner shape $\alpha$, and let $\SSRT(-/\widetilde{\alpha})$ denote the set of $\SSRT$s with inner shape $\widetilde{\alpha}$ (which might be empty). Then the \emph{generalized $\rho$ map}
\begin{eqnarray*}
\rho_{\alpha}: \SSRCT(-\cskew\alpha) \longrightarrow \SSRT(-/\widetilde{\alpha})
\end{eqnarray*}
is defined as follows. Given an $\SSRCT$ $\tau$, write the entries in each column in decreasing order from bottom to top, and bottom justify the new columns thus obtained on the inner shape $\widetilde{\alpha}$. This yields $\rho_{\alpha}(\tau)$.

We give the description of the inverse map $\rho_{\alpha}^{-1}$ next. Let $T$ be an $\SSRT$.
\begin{enumerate}
\item Take the first column of the $\SSRT$, and place the entries of this column above the first column of the inner shape $\alpha$ in increasing order from top to bottom.
\item Now take the set of entries in the second column in decreasing order and place them in the row with smallest index so that either
\begin{itemize}
\item the box to the immediate left of the number being placed is filled and the entry therein is weakly greater than the number being placed
\item the box to the immediate left of the number being placed belongs to the inner shape $\alpha$.
\end{itemize}
\end{enumerate}
As a matter of convention, instead of writing $\rho_{\varnothing}$ or $\rho_{\varnothing}^{-1}$, we will use $\rho$ or $\rho^{-1}$ respectively.
\begin{Example}
An $\SSRCT$ of shape $(2,5,4,2)\cskew (2,1,2)$ (left), and the corresponding $\SSRT$ of shape $(5,4,2,2)/(2,2,1)$ (right).
\begin{eqnarray*}
\ytableausetup{mathmode,boxsize=1.1em}
\begin{ytableau}
4 &3\\ \bullet & \bullet &7 &5 &1\\\bullet & 7&6 &2\\ \bullet & \bullet
\end{ytableau}
\mathrel{\mathop{\rightleftarrows}^{\mathrm{\rho_{(2,1,2)}}}_{\mathrm{\rho_{(2,1,2)}^{-1}}}} 
\ytableausetup{mathmode,boxsize=1.1em}
\begin{ytableau}
4&3\\ \bullet &7\\\bullet&\bullet&6&2\\\bullet&\bullet&7&5&1
\end{ytableau}
\end{eqnarray*}
\end{Example}

\subsection{Noncommutative symmetric functions}\label{NSym}
An algebra closely related to $\Lambda$ is the algebra of \textit{noncommutative symmetric functions} $\mathbf{NSym}$, introduced in \cite{GKLLRT}. It is the free associative algebra $\mathbb{Q}\langle \nch_1,\nch_2 ,\ldots \rangle$ generated by a countably infinite number of indeterminates $\nch_k$ for $k\geq 1$. Assigning degree $k$ to $\nch_k$, and extending this multiplicatively allows us to endow $\mathbf{NSym}$ with the structure of a graded algebra. A natural basis for the degree $n$ graded component of $\mathbf{NSym}$, denoted by $\mathbf{NSym}^{n}$, is given by the \textit{noncommutative complete homogeneous symmetric functions}, $\{\mathbf{h}_{\alpha}=\mathbf{h}_{\alpha_1}\cdots \mathbf{h}_{\alpha_k}:\alpha=(\alpha_1,\ldots,\alpha_k)\vDash n\}$.
The link between $\Lambda$ and $\mathbf{NSym}$ is made manifest through the \textit{forgetful} map, $\chi: \mathbf{NSym}\to \Lambda$, defined by mapping $\nch_i$ to $h_i$ and extending multiplicatively. Thus, the images of elements of $\mathbf{NSym}$ under $\chi$ are elements of $\Lambda$.

$\mathbf{NSym}$ has another important basis called the noncommutative ribbon Schur basis. We will denote the \textit{noncommutative ribbon Schur function} indexed by a composition $\beta$ by $\rib_{\beta}$. The following \cite[Proposition 4.13]{GKLLRT} can be taken as the definition of the noncommutative ribbon Schur functions.
\begin{eqnarray*}
\rib_{\beta}&=&\displaystyle\sum_{\alpha \succcurlyeq \beta}(-1)^{l(\beta)-l(\alpha)}\nch_{\alpha}
\end{eqnarray*}
Using the above basis, we can define the noncommutative Schur functions.

\subsection{Noncommutative Schur functions}
We will now describe a distinguished basis for $\mathbf{NSym}$, introduced in \cite{BLvW}, called the basis of \textit{noncommutative Schur functions}. These functions are naturally indexed by compositions, and the noncommutative Schur function indexed by a composition $\alpha$ will be denoted by $\ncs_{\alpha}$. They are defined implicitly using the relation
\begin{eqnarray*}\label{def:implicitnoncommutativeschur}
\rib_{\beta}=\displaystyle\sum_{\alpha \vDash \vert\beta\vert} d_{\alpha\beta}\ncs_{\alpha}
\end{eqnarray*}
where $d_{\alpha\beta}$ is the number of $\SRCT$s of shape $\alpha$ and descent composition $\beta$.

The noncommutative Schur function $\ncsa$ satisfies the following important property \cite[Equation 2.12]{BLvW}. \begin{align*}\chi(\ncsa)=s_{\widetilde{\alpha}}\end{align*}
Thus, the noncommutative Schur functions are noncommutative lifts of the Schur functions in $\mathbf{NSym}$ and, in fact, share many properties with the Schur functions. The interested reader should refer to \cite{BLvW, LMvW} for a detailed study of these functions.
For our purposes, we require the noncommutative Littlewood-Richardson rule proved in \cite{BLvW}.
\begin{Theorem}\cite[Theorem 3.5]{BLvW}\label{theorem: noncommutative LR rule}
Given compositions $\alpha$ and $\beta$, the product $\ncsa \cdot \ncsb$ can be expanded in the basis of noncommutative Schur functions as follows
\begin{align*}
\ncsa\cdot \ncsb = \displaystyle\sum_{\gamma}C_{\alpha\beta}^{\gamma}\ncsg
\end{align*}
where $C_{\alpha\beta}^{\gamma}$ counts the number of $\SRCT$s of shape $\gamma\cskew \beta$ that rectify to the canonical tableau of shape $\alpha$.
\end{Theorem}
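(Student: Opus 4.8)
The plan is to deduce the product formula from the dual statement in the Hopf algebra of quasisymmetric functions $\mathrm{QSym}$, together with the behaviour of rectification. Recall that $\mathbf{NSym}$ and $\mathrm{QSym}$ are dual graded Hopf algebras, with the noncommutative ribbon functions $\rib_\beta$ dual to the fundamental quasisymmetric functions $F_\beta$, and the noncommutative Schur functions $\ncs_\alpha$ dual to the quasisymmetric Schur functions $\quas_\alpha$. Under this pairing the product on $\mathbf{NSym}$ is adjoint to the coproduct on $\mathrm{QSym}$, so that
\begin{align*}
C_{\alpha\beta}^{\gamma}=\langle \ncsa\cdot\ncsb,\ \quas_\gamma\rangle=\langle \ncsa\otimes\ncsb,\ \Delta\quas_\gamma\rangle.
\end{align*}
Hence it suffices to prove that $\Delta\quas_\gamma=\sum_{\beta}\quas_{\gamma\cskew\beta}\otimes\quas_\beta$, where $\quas_{\gamma\cskew\beta}$ is the \emph{skew quasisymmetric Schur function}, and then that $\quas_{\gamma\cskew\beta}=\sum_{\alpha}C_{\alpha\beta}^{\gamma}\,\quas_\alpha$ with $C_{\alpha\beta}^{\gamma}$ as claimed. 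The coproduct identity is routine from the monomial (SSRCT) description once the skew objects are in place, so the content is entirely in the skew expansion.

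First I would establish the fundamental expansion $\quas_{\gamma\cskew\beta}=\sum_{\tau\in\SRCT(\gamma\cskew\beta)}F_{\comp(\tau)}$. This follows from a standardization argument converting the monomial description of $\quas_{\gamma\cskew\beta}$ into a sum of fundamental quasisymmetric functions indexed by the descent compositions of the underlying standard objects, exactly as in the straight-shape case; in particular $\quas_\alpha=\sum_{S\in\SRCT(\alpha)}F_{\comp(S)}$, and the canonical tableau $\tau_\alpha$ contributes the term $F_\alpha$ since $\comp(\tau_\alpha)=\alpha$.

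The key step is to organize $\SRCT(\gamma\cskew\beta)$ by rectification. I would use three facts, each transported to the composition setting through $\rho$ from the classical theory of standard reverse tableaux: (i) rectification is well-defined, independent of the slide order, so ``rectifies to a given tableau'' is meaningful; (ii) jeu de taquin slides preserve the descent composition of a standard tableau, whence $\comp(\tau)=\comp(\rect(\tau))$; and (iii) for a fixed straight shape $\alpha$, the number of $\tau\in\SRCT(\gamma\cskew\beta)$ rectifying to a given $S\in\SRCT(\alpha)$ is independent of the choice of $S$. Granting these, group the fundamental expansion of $\quas_{\gamma\cskew\beta}$ by the straight shape $\alpha$ of the rectification target. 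For fixed $\alpha$ the contribution is $\sum_{S\in\SRCT(\alpha)}\sum_{\tau:\ \rect(\tau)=S}F_{\comp(\tau)}$, which by (ii) equals $\sum_{S\in\SRCT(\alpha)}\bigl(\#\{\tau:\rect(\tau)=S\}\bigr)F_{\comp(S)}$, and by (iii) equals $C_{\alpha\beta}^{\gamma}\sum_{S\in\SRCT(\alpha)}F_{\comp(S)}=C_{\alpha\beta}^{\gamma}\,\quas_\alpha$, where $C_{\alpha\beta}^{\gamma}$ is evaluated by taking the representative $S=\tau_\alpha$. Summing over $\alpha$ gives $\quas_{\gamma\cskew\beta}=\sum_\alpha C_{\alpha\beta}^{\gamma}\,\quas_\alpha$, as needed.

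I expect the main obstacle to be establishing (i), (ii), and (iii) in the composition-tableau world, since the naive slide need not be confluent, descent-preserving, or fiber-balanced on reverse composition shapes. The clean route is to push everything through the generalized map $\rho$ into the classical theory, where confluence, descent preservation, and independence of the rectification target are standard (and where Lemma~\ref{lemma: basic properties of RSK} governs the interaction of rectification with evacuation and inverses); the delicate point is verifying that $\rho$ intertwines the composition slides with the classical ones and preserves descent compositions, so that the classical Littlewood--Richardson count reads back as a count of SRCTs rectifying to $\tau_\alpha$. A useful consistency check throughout is the forgetful map: applying $\chi$ collapses $\sum_\gamma C_{\alpha\beta}^{\gamma}\ncsg$ to $s_{\widetilde\alpha}s_{\widetilde\beta}$, forcing $\sum_{\gamma:\ \widetilde\gamma=\lambda}C_{\alpha\beta}^{\gamma}=c^{\lambda}_{\widetilde\alpha\,\widetilde\beta}$, the classical Littlewood--Richardson coefficient.
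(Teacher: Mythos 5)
You should first be aware that the paper you are trying to match does not prove this statement at all: it is imported verbatim from \cite{BLvW} (Theorem 3.5 there), with the paper only adding the clarification that ``rectifies'' means $\rho^{-1}(P(w_{\tau_1}))=\tau_2$. So the relevant comparison is with the proof in \cite{BLvW}, and at the level of architecture your proposal reconstructs it correctly: Hopf duality of $\mathbf{NSym}$ and $\mathrm{QSym}$, adjointness of product and coproduct, the coproduct formula $\Delta\quas_\gamma=\sum_\beta\quas_{\gamma\cskew\beta}\otimes\quas_\beta$, the fundamental expansion $\quas_{\gamma\cskew\beta}=\sum_{\tau\in\SRCT(\gamma\cskew\beta)}F_{\comp(\tau)}$, and grouping by rectification class. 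Your facts (i) and (ii) also transport through $\rho$ exactly as you hope: (i) is automatic because rectification of an $\SRCT$ is \emph{defined} by reading-word insertion, so no confluence is needed; and (ii) holds because descent sets depend only on the columns of entries, $\rho$ preserves the set of entries in every column (so in fact $w_\tau=w_{\rho_\beta(\tau)}$ as words), and classical jeu de taquin preserves descent sets of standard reverse tableaux.

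The genuine gap is your fact (iii), and the route you propose for it --- ``push everything through $\rho$ into the classical theory, where \ldots independence of the rectification target [is] standard'' --- cannot work, because the composition-shape statement is strictly stronger than its classical image. Under $\rho_\beta$, the fiber $\{\tau\in\SRCT(\gamma\cskew\beta):\rect(\tau)=S\}$ corresponds to the set of $T\in\SRT(\widetilde\gamma/\widetilde\beta)$ with $\rect(T)=\rho(S)$ \emph{and} with $\rho_\beta^{-1}(T)$ having composition outer shape exactly $\gamma$; classical theory makes the count of the full fiber $\{T:\rect(T)=U\}$ independent of $U\in\SRT(\widetilde\alpha)$, but says nothing about how that fiber splits among the composition shapes $\gamma$ refining $\widetilde\gamma$, and that splitting genuinely depends on $U$. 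Concretely, take $\beta=(1)$, $\widetilde\gamma=(2,1,1)$, $\widetilde\alpha=(2,1)$: there are exactly three $\SRT$s of shape $(2,1,1)/(1)$, and each of the two $\SRT$s of shape $(2,1)$, namely $U_1=\rho(\tau_{(1,2)})$ and $U_2=\rho(\tau_{(2,1)})$, has exactly one of them rectifying to it (the classical fiber count is the Littlewood--Richardson number $1$); however, the tableau rectifying to $U_1$ pulls back under $\rho_{(1)}^{-1}$ to outer shape $(1,2,1)$, while the one rectifying to $U_2$ pulls back to outer shape $(2,1,1)$. Thus for the fixed shape $\gamma\cskew\beta=(1,2,1)\cskew(1)$ the refined fiber count is $1$ over $U_1$ and $0$ over $U_2$: it is \emph{not} constant on $\SRT(\widetilde\alpha)$, only on the subfamily $\{\rho(S):S\in\SRCT(\alpha)\}$ for each fixed composition $\alpha$, which is exactly what must be proved. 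Establishing that constancy --- the fine interaction between rectification and Mason's map --- is the technical core of \cite{BLvW} and does not follow from the classical theory by transport; without it, your grouping argument stalls at $\quas_{\gamma\cskew\beta}=\sum_{\alpha}\sum_{S\in\SRCT(\alpha)}\#\{\tau:\rect(\tau)=S\}\,F_{\comp(S)}$ and cannot be collapsed to $\sum_\alpha C_{\alpha\beta}^{\gamma}\quas_\alpha$.
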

We should clarify what it means to rectify for $\SRCT$s. We say that an $\SRCT$ $\tau_1$ of shape $\gamma\cskew \beta$ rectifies to a tableau $\tau_2$ of straight shape $\alpha$ if $\rho^{-1}(P(w_{\tau_1}))=\tau_2$.
\section{Box removing operators and jeu de taquin operators on compositions}\label{section: operators on compositions}
In this section, we will define some operators on compositions that will play an important role later.

The \textit{box removing operators} on compositions, denoted by $\down_i$ for $i\geq 1$, have the following description: $\down_i(\alpha)=\alpha'$ where $\alpha'$ is the composition obtained by subtracting 1 from the rightmost part of length $i$ in $\alpha$ and omitting any $0$s that might result in so doing. If there is no such part, then $\down_i(\alpha)=0$.
\begin{Example}
Let $\alpha=(2,1,2)$. Then $\down_1(\alpha)=(2,2)$ and $\down_2(\alpha)=(2,1,1)$.
\end{Example}
Given a nonempty set of positive integers $S=\{a_1<a_2<\cdots <a_r\}$, we will also be needing the following \text{p}roduct of box removing operators.
\begin{eqnarray*}
\downrow_{S}=\down_{a_1}\down_{a_2}\cdots \down_{a_r}
\end{eqnarray*}
Note that the product above is not commutative, and we will consider $\downrow_S$ as operators $\down_{a_r}$, $\down_{a_{r-1}}$, $\ldots$, $\down_{a_1}$ applied in succession to a composition, in that order. As a matter of convention, $\downrow_{\varnothing}$ is to be interpreted as the identity map.
Of particular interest to us is the operator $\downrow_{[i]}$ for $i\geq 1$, where $[i]=\{1,\ldots,i\}$. We define [0] to be the empty set $\varnothing$. Then $\downrow_{[0]}$ is the identity map.

Next, define an operator $a_i$ that appends a part of length $i$ to a composition $\alpha$ at the end, for $i\geq 1$. For example, $a_2((2,1,3))=(2,1,3,2)$. Notice also that $a_j\down_2((3,5,1))=0$ for any positive integer $j$, as $\down_2((3,5,1))=0$.

With the definitions of $a_i$ and $\downrow_{[i]}$, we can define the \textit{jeu de taquin operators} (henceforth abbreviated to \textit{jdt operators}), $\up_i$ for $i\geq 1$, as follows.
\begin{eqnarray*}
\up_i=a_i\downrow_{[i-1]}
\end{eqnarray*}

\begin{Example}
We will compute $\up_4(\alpha)$ where $\alpha=(6,3,2,3,1,5,4)$. Firstly, $\down_1\down_2\down_3(\alpha)=(6,3,2,1,5,4)$, and thus $a_4\downrow_{[3]}(\alpha)=(6,3,2,1,5,4,4)$.
\end{Example}
\begin{Remark}\label{remark: up operator on partition}
Let $\alpha$ be a composition and $\lambda =\widetilde{\alpha}$. Let $\beta = u_i(\alpha)$ for some positive integer $i$. Then $\widetilde{\beta}$ is the partition obtained by adding a box at the addable node to $\lambda$ in column $i$.
\end{Remark}

\section{Statement of main results}\label{section: Results}
In this section, we will state our algorithm for our analogue of the classical backward jeu de taquin slide. We will work under the following assumptions in this section and for all the proofs in the coming sections, unless otherwise stated.
\begin{itemize}
\item $i$ will denote a positive integer $\geq 1$.
\item $T$ will refer to a fixed $\SSRT$ of shape $\lambda$.
\item The $\SSRCT$ $\rho^{-1}(T)$ will be denoted by $\tau$ and we will assume that it has shape $\alpha$ where $\alpha$ clearly satisfies $\widetilde{\alpha}=\lambda$.
\item There exists an addable node in column $i+1$ of $\lambda$. 
\end{itemize}
\subsection{Backward jeu de taquin slides for SSRCTs}\label{subsection: results 1}
With the notation from above, define a positive integer $r_1$ to be such that such that $\alpha_{r_1}$ is the bottommost part of length $i$ in $\alpha$. This given define positive integers $r_j$ for $j\geq 2$ recursively subject to the following conditions.
\begin{enumerate}[I.]
\item $r_j<r_{j-1}$.
\item $r_j$ is the greatest positive integer such that $\tau (r_{j},i)>\tau(r_{j-1},i)\geq \tau (r_{j},i+1)$. If the box in position $(r_{j},i+1)$ does not belong to $\tau$, we let $\tau(r_{j},i+1)$ equal 0.
\end{enumerate}
Clearly the $r_j$s defined above are finitely many. Let $r_k$ denote the minimum element and
$S=\{r_1>\cdots > r_k\}$.
Next we give an algorithm $\phi_{i+1}$ that takes $\tau$ as input and outputs an $\SSRCT$ $\phi_{i+1}(\tau)$ using the set $S$ above. 
\begin{Algorithm}\label{results algorithm1}
\hfill
\begin{enumerate}[(i)]
\item For j=$k$ down to $1$ in that order,
\begin{itemize}
\item Replace the entry in box $(r_{j},i)$ by the entry in box $(r_{j-1},i)$. The entry $\tau(r_k,i)$ \emph{exits} the tableau in every iteration. 
\item Remove the box in position $(r_{1},i)$. 
\end{itemize}
\item Denote the resulting filling by $\phi_{i+1}(\tau)$.
\end{enumerate}
\end{Algorithm}
We will use the above algorithm and give a procedure on $\SSRCT$s of straight shape, referred to as $\mu_i$, and this is analogous to the procedure $\jdt_i$ on $\SSRT$s of straight shape. Our input is again $\tau$ and the output is an $\SSRCT$ $\mu_{i}(\tau)$.
\begin{Algorithm}\label{results algorithm2}
\hfill
\begin{enumerate}[(i)]
\item If $i\geq 2$, compute first the $\SSRCT$ $\phi_{2}\circ \cdots \circ\phi_{i}(\tau)$. Denote this by $\tau'$. If $i=1$, define $\tau'$ to be $\tau$. Furthermore, store the entries that exit in an array $H$, in the case $i\geq 2$.
\item Let $\sha(\tau')=\gamma=(\gamma_1,\ldots,\gamma_k)$. Let $\delta = (\gamma_1,\ldots,\gamma_k,i)$.
\item Consider a filling of the skew reverse composition shape $\delta \cskew (1)$ where the top $k$ rows are filled according to $\tau'$, and the last row is filled by the entries in $H$ in decreasing order from left to right.
\item The filling of the skew reverse composition shape $\delta \cskew (1)$ obtained above is defined to be $\mu_{i}(\tau)$.
\end{enumerate}
\end{Algorithm}
Now we will state a theorem that gives credence to our claim that $\mu_i$ is analogous to $\jdt_i$.
\begin{Theorem}\label{theorem: results backwardjdtslideonssrct}
With the notation as before, the following diagram commutes.
\begin{eqnarray*}
\xymatrix{ \text{$\SSRT$\emph{s}} \ar[d]_{\jdt_{i}} \ar[r]^{\rho^{-1}} & \text{$\SSRCT$\emph{s}}\ar[d]^{\mu_{i}}\\ \text{$\SSRT$\emph{s}} \ar[r]_{\rho_{(1)}^{-1}} & \text{$\SSRCT$\emph{s}}}
\end{eqnarray*}
\end{Theorem}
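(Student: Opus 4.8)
The plan is to prove the identity of maps $\mu_i\circ\rho^{-1}=\rho_{(1)}^{-1}\circ\jdt_i$ by reducing everything to how the four maps act on the \emph{columns} of the tableaux involved, and then matching the two composites column by column and row by row. The foundational observation I would record first is that $\rho$ and $\rho^{-1}$ (and likewise $\rho_{(1)}$, $\rho_{(1)}^{-1}$) are column-preserving: they only sort the entries within each column and redistribute whole columns among the rows, so the multiset of entries occupying the $j$-th column of $T$ equals that of the $j$-th column of $\tau=\rho^{-1}(T)$ for every $j$ (up to the offset from an inner cell when present). Consequently, to show the square commutes it suffices to establish (a) that $\mu_i(\tau)$ and $\rho_{(1)}^{-1}(\jdt_i(T))$ have the same column multisets, and (b) that these common columns are distributed into the same rows.

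For (a) I would analyze $\jdt_i$ entirely through its effect on column contents. Because the hole introduced at the addable node can only move down or to the left as it travels to $(1,1)$, it crosses each boundary between consecutive columns at most once, and tracking its path shows that the slide promotes exactly one entry from a column into the next across every boundary it crosses, vacates the cell $(1,1)$ (thereby creating the inner shape $(1)$), and fills the cell created at the addable node. The core of the proof is to match a single such promotion with the operator $\phi_{i+1}$ of Subsection \ref{subsection: results 1}, which governs the boundary between columns $i$ and $i+1$. Here one must unwind the resorting performed by $\rho$: a single displaced entry in the strictly sorted column of $T$ corresponds, after $\rho^{-1}$, to the shift of an entire chain of cells inside the corresponding column of $\tau$, and I would show that this chain is exactly $r_1>\cdots>r_k$ and that the inequality $\tau(r_j,i)>\tau(r_{j-1},i)\ge \tau(r_j,i+1)$ is precisely the $\rho$-image of the slide's min-rule restricted to columns $i$ and $i+1$. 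Thus the entry that $\phi_{i+1}$ causes to exit equals the entry the slide promotes across that boundary.

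It then remains to determine where these exited entries are re-placed, which simultaneously resolves (b). On the top path, Algorithm \ref{results algorithm2} collects the exited entries in $H$ and installs them, in decreasing order, as a new bottom row on the inner shape $(1)$; since this row sits at the bottom and begins one cell to the right of the first column, it re-deposits the entry that left each column into the next column, exactly matching the destination of the slide. To complete (b) I would verify that the top rows produced by $\tau'=\phi_2\circ\cdots\circ\phi_i(\tau)$, together with this new bottom row, coincide with the $\SSRCT$ obtained by applying $\rho_{(1)}^{-1}$ to the slid $\SSRT$; in other words, that the explicit recipe defining $\mu_i$ reproduces the placement rule of $\rho_{(1)}^{-1}$ on the promoted columns. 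I would organize this as an induction on $i$: the base case handles a slide crossing a single boundary, and the inductive step peels off the topmost boundary, handled by $\phi_i$, reducing to a slide crossing one fewer boundary. At each stage one confirms that the intermediate filling is a legitimate $\SSRCT$ by excluding triple-rule violations, which is exactly the role played by the inequality defining the chain $r_1>\cdots>r_k$.

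The step I expect to be the main obstacle is the single-boundary matching in (a): proving that the chain $r_1>\cdots>r_k$ and its defining inequality faithfully encode the $\rho$-conjugate of one slide promotion. This demands careful treatment of the boundary cases of the slide rule — when only one of the two candidate cells exists, when the two competing entries are equal (so that the tie-break $c'=(i-1,j)$ applies), and when the cell $(r_j,i+1)$ is absent (so $\tau(r_j,i+1)$ is set to $0$) — and of the fact that $\rho$ resorts each column, so that a single displaced entry in the sorted column of $T$ unfolds into a multi-cell shift in the unsorted column of $\tau$. Once this correspondence is established for one boundary together with its $\SSRCT$-validity, parts (a) and (b) assemble through the column-preservation of $\rho_{(1)}^{-1}$ and the inductive bookkeeping above.
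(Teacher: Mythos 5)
Your proposal is correct and follows essentially the same route as the paper: your ``single-boundary promotion'' is precisely the paper's truncated slide $\tjdt_{i+1}$ matched with $\phi_{i+1}$ (Theorem \ref{commutativediagramforPhi}), your induction peeling off one boundary at a time is the paper's recursion $H_{i}(\tjdt_{i+1}(T))\cup\{f_{i+1}(T)\}=H_{i+1}(T)$, and your final column-multiset argument together with the triple-rule checks for the appended bottom row is exactly how the paper concludes in Theorem \ref{backwardjdtslideonssrct}. The technical obstacle you flag --- showing the chain $r_1>\cdots>r_k$ encodes the $\rho$-conjugate of one promotion, including the tie-breaking and zero conventions --- is indeed where the paper spends its effort (Lemmas \ref{lemma0} through \ref{cruciallemma2}).
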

\begin{Example}
Given below is an $\SSRT$ $T$ (left) and the $\SSRCT$ $\tau= \rho^{-1}(T)$ (right).
\begin{eqnarray*}
T=\ytableausetup{centertableaux,boxsize=1.25em}
\begin{ytableau}
8\\19 & 9 & 6\\24 & 17 & 7 & 3\\26 & 23 & 18 & 10\\37 & 28 & 25 & 20\\43 & 35 & 27 & 22 & 16 & 12 & 1\\44 & 38 & 36 & 29 & 21 & 15 & 4 & 2\\47 & 42 & 41 & 33 & 32 & 31 & 13 & 5\\48 & 46 & 45 & 40 & 39 & 34 & 30 & 14 & 11
\end{ytableau}
\tau =\ytableausetup{centertableaux,boxsize=1.25em}
\begin{ytableau}
8\\19 & 17 & 7 & 3\\24 & 23 & 18 & 10\\26 & 9 & 6\\37 & 35 & 27 & 22 & 21 & 15 & 13 & 5\\43 & 42 & 41 & 40 & 39 & 34 & 30 & 14 & 11\\44 & 38 & 36 & 33 & 32 & 31 & 4 & 2\\47 & 46 & 45 & 29 & 16 & 12 & 1\\48 & 28 & 25 & 20
\end{ytableau}
\end{eqnarray*}

We will compute $\mu_{9}(\tau)$. We start by computing $\phi_{2}\circ \cdots \circ \phi_{9}(\tau)$. To comprehensively illustrate the theorem, this is shown step by step below. The entries in rows $r_j$ for $1\leq j\leq k$ that we need in applying Algorithm \ref{results algorithm1} are depicted in green, except the entry that exits, which is depicted in red.
\begin{eqnarray*}
\begin{array}{ccc}
\begin{ytableau}
8\\19 & 17 & 7 & 3\\24 & 23 & 18 & 10\\26 & 9 & 6\\37 & 35 & 27 & 22 & 21 & 15 & 13 & *(red) 5\\43 & 42 & 41 & 40 & 39 & 34 & 30 & 14 & 11\\44 & 38 & 36 & 33 & 32 & 31 & 4 & *(green) 2\\47 & 46 & 45 & 29 & 16 & 12 & 1\\48 & 28 & 25 & 20
\end{ytableau} & 
\begin{ytableau}
8\\19 & 17 & 7 & 3\\24 & 23 & 18 & 10\\26 & 9 & 6\\37 & 35 & 27 & 22 & 21 & 15 & *(red) 13 & 2\\43 & 42 & 41 & 40 & 39 & 34 & 30 & 14 & 11\\44 & 38 & 36 & 33 & 32 & 31 & *(green)4\\47 & 46 & 45 & 29 & 16 & 12 & *(green) 1\\48 & 28 & 25 & 20
\end{ytableau} &
\begin{ytableau}
8\\19 & 17 & 7 & 3\\24 & 23 & 18 & 10\\26 & 9 & 6\\37 & 35 & 27 & 22 & 21 & 15 & 4 & 2\\43 & 42 & 41 & 40 & 39 & *(red) 34 & 30 & 14 & 11\\44 & 38 & 36 & 33 & 32 & *(green) 31 & 1\\47 & 46 & 45 & 29 & 16 & *(green) 12\\48 & 28 & 25 & 20
\end{ytableau}
\\\text{ }& \text{ } &\text{ }\\ \phi_{9}(\tau) & \phi_{8}\circ\phi_{9}(\tau) & \phi_{7}\circ \phi_{8} \circ\phi_{9}(\tau)\\
\text{ }& \text{ } &\text{ }\\
\begin{ytableau}
8\\19 & 17 & 7 & 3\\24 & 23 & 18 & 10\\26 & 9 & 6\\37 & 35 & 27 & 22 & 21 & 15 & 4 & 2\\43 & 42 & 41 & 40 & *(red) 39 & 31 & 30 & 14 & 11\\44 & 38 & 36 & 33 & *(green) 32 & 12 & 1\\47 & 46 & 45 & 29 & *(green) 16\\48 & 28 & 25 & 20
\end{ytableau} &
\begin{ytableau}
8\\19 & 17 & 7 & 3\\24 & 23 & 18 & 10\\26 & 9 & 6\\37 & 35 & 27 & 22 & 21 & 15 & 4 & 2\\43 & 42 & 41 & *(red) 40 & 32 & 31 & 30 & 14 & 11\\44 & 38 & 36 & *(green) 33 & 16 & 12 & 1\\47 & 46 & 45 & *(green) 29\\48 & 28 & 25 & *(green) 20
\end{ytableau} &
\begin{ytableau}
8\\19 & 17 & 7 & 3\\24 & 23 & 18 & 10\\26 & 9 & 6\\37 & 35 & 27 & 22 & 21 & 15 & 4 & 2\\43 & 42 & 41 & 33 & 32 & 31 & 30 & 14 & 11\\44 & 38 & 36 & 29 & 16 & 12 & 1\\47 & 46 & *(red) 45 & 20\\48 & 28 & *(green) 25
\end{ytableau}\\
\text{ } & \text{ } & \text{ }\\ 
\phi_{6}\circ \cdots \circ\phi_{9}(\tau) & \phi_{5}\circ \cdots \circ\phi_{9}(\tau) & \phi_{4}\circ \cdots \circ\phi_{9}(\tau) \\
\text{ } & \text{ } & \text{ }
\\
\begin{ytableau}
8\\19 & 17 & 7 & 3\\24 & 23 & 18 & 10\\26 & 9 & 6\\37 & 35 & 27 & 22 & 21 & 15 & 4 & 2\\43 & 42 & 41 & 33 & 32 & 31 & 30 & 14 & 11\\44 & 38 & 36 & 29 & 16 & 12 & 1\\47 & *(red)46 & 25 & 20\\48 & *(green)28 
\end{ytableau} &
\begin{ytableau}
8\\19 & 17 & 7 & 3\\24 & 23 & 18 & 10\\26 & 9 & 6\\37 & 35 & 27 & 22 & 21 & 15 & 4 & 2\\43 & 42 & 41 & 33 & 32 & 31 & 30 & 14 & 11\\44 & 38 & 36 & 29 & 16 & 12 & 1\\47 & 28 & 25 & 20\\*(red)48 
\end{ytableau} & \text{ }
\\
\text{ } & \text{ } & \text{ }\\
\phi_{3}\circ \cdots \circ \phi_{9}(\tau) & \phi_2\circ \cdots \circ \phi_9(\tau) & \text{ }
\end{array}
\end{eqnarray*}
Thus, finally we have the following. 
\begin{eqnarray*}
\begin{array}{cc}
\ytableausetup{centertableaux}
\begin{ytableau}
8\\19 & 17 & 7 & 3\\24 & 23 & 18 & 10\\26 & 9 & 6\\37 & 35 & 27 & 22 & 21 & 15 & 4 & 2\\43 & 42 & 41 & 33 & 32 & 31 & 30 & 14 & 11\\44 & 38 & 36 & 29 & 16 & 12 & 1\\47 & 28 & 25 & 20\\ \none & *(red)48 & *(red) 46 & *(red) 45 & *(red) 40 & *(red) 39 & *(red) 34 & *(red) 13 & *(red) 5
\end{ytableau} &
\begin{ytableau}
8\\19 & 9 & 6\\24 & 17 & 7 & 3\\26 & 23 & 18 & 10\\37 & 28 & 25 & 20\\43 & 35 & 27 & 22 & 16 & 12 & 1\\44 & 38 & 36 & 29 & 21 & 15 & 4 & 2\\47 & 42 & 41 & 33 & 32 & 31 & 30& 13 & 5\\\none & 48 & 46 & 45 & 40 & 39 & 34 & 14 & 11
\end{ytableau}
\\
\text{ } &\text{ }\\
 \mu_{9}(\tau) & \jdt_{9}(T)
\end{array}
\end{eqnarray*}
One can see that $\rho_{(1)}(\mu_{9}(\tau))=\jdt_{9}(T)$.
\end{Example}
Computing $\mu_i$ applied to $\SSRCT$s of skew reverse composition shape is very similar, and is discussed in Section \ref{section: slides for skew shape}.
\subsection{Right Pieri rule}\label{subsection: results 2}
In this subsection, we will state our right Pieri rule using the jdt operators and then give an example. The proof is given in Section \ref{section: right pieri rule}.
\begin{Theorem}[Right Pieri rule]\label{theorem: right pieri rules}
Let $\alpha$ be a composition and $n$ a positive integer. Then
\begin{eqnarray*}
\ncsa\cdot \ncs_{(n)}=\displaystyle\sum_{\substack{\beta\vDash |\alpha|+n, u_{i_n}\cdots u_{i_1}(\alpha)=\beta\\i_n>\cdots >i_1}} \ncsb ,\\
\ncsa\cdot \ncs_{(1^n)}=\displaystyle\sum_{\substack{\beta\vDash |\alpha|+n, u_{i_n}\cdots u_{i_1}(\alpha)=\beta\\i_n\leq \cdots \leq i_1}} \ncsb .
\end{eqnarray*}
\end{Theorem}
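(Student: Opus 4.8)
The plan is to combine the noncommutative Littlewood-Richardson rule (Theorem \ref{theorem: noncommutative LR rule}) with the generalized map $\rho$ and the backward jeu de taquin machinery. Specializing Theorem \ref{theorem: noncommutative LR rule} to $\beta=(n)$ and $\beta=(1^n)$ gives
\[
\ncsa\cdot \ncs_{(n)}=\sum_{\gamma}C_{\alpha,(n)}^{\gamma}\,\ncsg,\qquad \ncsa\cdot \ncs_{(1^n)}=\sum_{\gamma}C_{\alpha,(1^n)}^{\gamma}\,\ncsg,
\]
where $C_{\alpha,(n)}^{\gamma}$ counts the $\SRCT$s of shape $\gamma\cskew (n)$ rectifying to $\tau_{\alpha}$, and similarly for inner shape $(1^n)$. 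The first reduction I would make is to push these counts over to the classical side: since $\rho_{(n)}$ reorders entries only within columns, the column reading word of an $\SRCT$ $\tau_1$ and of the $\SSRT$ $\rho_{(n)}(\tau_1)$ coincide, so $\tau_1$ rectifies to $\tau_{\alpha}$ exactly when the $\SRT$ $\rho_{(n)}(\tau_1)$, of shape $\widetilde{\gamma}/(n)$, classically rectifies to $\rho(\tau_{\alpha})$, a fixed $\SRT$ of straight shape $\widetilde{\alpha}$.

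The multiplicity-freeness would then follow cheaply. For a fixed underlying partition $\widetilde{\gamma}$, the number of $\SRT$s of shape $\widetilde{\gamma}/(n)$ rectifying to a fixed tableau of straight shape $\widetilde{\alpha}$ is the classical Littlewood-Richardson coefficient $c^{\widetilde{\gamma}}_{\widetilde{\alpha},(n)}$, which by the classical Pieri rule is $1$ when $\widetilde{\gamma}/\widetilde{\alpha}$ is a horizontal strip and $0$ otherwise (respectively a vertical strip in the $(1^n)$ case). Because $\rho_{(n)}$ is a bijection, the unique such $\SRT$ (when it exists) has a single $\SRCT$ preimage, whose outer composition shape is one specific composition $\gamma_0$ with $\widetilde{\gamma_0}=\widetilde{\gamma}$. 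Hence $C_{\alpha,(n)}^{\gamma},C_{\alpha,(1^n)}^{\gamma}\in\{0,1\}$, and exactly one composition $\gamma_0$ occurs for each admissible underlying partition; both expansions are therefore multiplicity-free.

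The remaining, and most delicate, task is to identify $\gamma_0$ with $u_{i_n}\cdots u_{i_1}(\alpha)$ for the correct monotone index sequence. Here I would build the unique representative $\SRT$ from $\rho(\tau_{\alpha})$ by performing backward jeu de taquin slides in the strip columns $i_1<\cdots<i_n$, and transport these slides through $\rho^{-1}$ to the operations $\mu_{i}$ on $\SSRCT$s via the commuting square of Theorem \ref{theorem: results backwardjdtslideonssrct} (and its skew-shape version from Section \ref{section: slides for skew shape}). By Remark \ref{remark: up operator on partition} and the definition $u_i=a_i\downrow_{[i-1]}$, each such slide alters the outer composition shape precisely by the operator $u_i$, so iterating yields $\gamma_0=u_{i_n}\cdots u_{i_1}(\alpha)$; the same Remark guarantees that this product is nonzero exactly when the corresponding strip exists.

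The step I expect to be the main obstacle is the geometric bookkeeping that ties the type of the inner shape to the monotonicity of the indices: I must show that the $n$ exiting boxes assemble into the single row $(n)$ precisely when the slides are taken in strictly increasing columns, forcing $i_n>\cdots>i_1$, and into the single column $(1^n)$ precisely when taken in weakly decreasing columns, forcing $i_n\leq\cdots\leq i_1$. This requires a careful analysis of how the skew backward slide enlarges the inner shape box by box, together with a verification that the iterated slides compose to the stated operator product without ever introducing a triple rule violation in the resulting $\SSRCT$s. Granting this, the two index conditions match the horizontal and vertical strip conditions coming from the classical Pieri rule, every admissible $\gamma=u_{i_n}\cdots u_{i_1}(\alpha)$ arises with coefficient exactly $1$, and the two claimed expansions follow.
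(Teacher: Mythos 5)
Your architecture is sound and, in one substantive respect, genuinely different from the paper's proof (Theorem \ref{theorem: right pieri rules actual proof}). Both arguments start from Theorem \ref{theorem: noncommutative LR rule}, and your transfer to the classical side is valid: $\rho_{(n)}$ permutes entries only within columns, so column reading words are preserved, and the rectification condition for an $\SRCT$ of shape $\gamma\cskew(n)$ is equivalent to the classical rectification condition for the corresponding $\SRT$ of shape $\widetilde{\gamma}/(n)$. From there you obtain multiplicity-freeness from the classical Pieri rule, i.e. $c^{\widetilde{\gamma}}_{\widetilde{\alpha},(n)}\in\{0,1\}$, together with bijectivity of $\rho_{(n)}$. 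The paper instead stays entirely inside its own jeu de taquin framework: Remark \ref{remark: important remark} (resting on invertibility of $\jdt$ for $\SRT$s and Corollary \ref{lemma: SRT slides to get row/column shape}) shows that every $\SRCT$ counted by $C^{\gamma}_{\alpha,(n)}$ has the form $\mu_{i_n}\cdots\mu_{i_1}(\tau_{\alpha})$ with $i_n>\cdots>i_1$, Lemma \ref{lemma: slides in increasing order right pieri} shows each such tableau does rectify to $\tau_{\alpha}$, and distinctness of the resulting outer shapes follows from Remark \ref{remark: up operator on partition}. Your route buys a small economy: since uniqueness comes from the classical Pieri rule, you need only the constructive direction (monotone slide sequences produce the stated inner shape), not its converse, which the paper's argument does require; the price is importing classical Littlewood--Richardson facts that the paper never uses.

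The one place your proposal is not yet a proof is exactly the step you flag: that slides from columns $i_1<\cdots<i_n$ vacate the single row $(n)$ (respectively, that weakly decreasing columns vacate $(1^n)$). You propose to establish this by a direct box-by-box analysis of how the skew slide enlarges the inner shape; this is precisely the delicate point that the machinery of Section \ref{section: new poset} was built to circumvent, and a from-scratch analysis would amount to re-proving Corollary \ref{corollary: slides to get row/column shape}. The paper obtains that corollary not by tracking boxes but by identifying the inner shape after a slide sequence $w$ with $\sha(\rho^{-1}(Q(\std(w))))$ (Theorems \ref{theorem: jdt sequences and recording tableau} and \ref{lemma: predicting inner skew shape after slides}), thereby reducing the monotonicity statement to a property of recording tableaux under the Robinson--Schensted variant and evacuation. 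If at this point you simply cite Corollary \ref{corollary: slides to get row/column shape} (or its $\SRT$ version, Corollary \ref{lemma: SRT slides to get row/column shape}), your argument closes and is complete; if the box-by-box analysis is meant literally, that is where the real remaining work lies, and it is substantially harder than the phrase suggests.
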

In the statement above, $(1^n)$ denotes the composition with $n$ parts equalling $1$ each.
\begin{Example}
Let $\alpha=(1,4,2)$. We will compute $\ncsa\cdot \ncs_{(3)}$ first. Thus, we are interested in finding positive integers $i_1<i_2<i_3$ such that $u_{i_3}u_{i_2}u_{i_1}(\alpha)$ is a composition of size $10$. One can see that we have the following choices.
\begin{eqnarray*}
\begin{array}{lll}
u_3u_2u_1(\alpha)=(1,4,2,3) & u_5u_2u_1(\alpha)=(1,2,2,5) & u_4u_3u_1(\alpha)=(1,4,1,4)\\u_5u_3u_1(\alpha)=(1,3,1,5) & u_6u_5u_1(\alpha)=(1,2,1,6) & u_4u_3u_2(\alpha)=(4,2,4)\\u_5u_3u_2(\alpha)=(3,2,5) & u_6u_5u_2(\alpha)=(2,2,6) & u_5u_4u_3(\alpha)=(1,4,5) \\ u_6u_5u_3(\alpha)=(1,3,6) & u_7u_6u_5(\alpha)=(1,2,7) & \text{ }
\end{array}
\end{eqnarray*}
This gives us the following expansion for $\ncs_{(1,4,2)}\cdot \ncs_{(3)}$.
\begin{eqnarray*}
\ncs_{(1,4,2)}\cdot \ncs_{(3)}=&& \ncs_{(1,4,2,3)}+\ncs_{(1,2,2,5)}+\ncs_{(1,4,1,4)}+\ncs_{(1,3,1,5)}+\ncs_{(1,2,1,6)}+\ncs_{(4,2,4)}\\&&+\ncs_{(3,2,5)}+\ncs_{(2,2,6)}+\ncs_{(1,4,5)}
+\ncs_{(1,3,6)}+\ncs_{(1,2,7)}
\end{eqnarray*}
Next, we compute $\ncsa\cdot \ncs_{(1,1,1)}$. Then we are interested in finding positive integers $i_1\geq i_2\geq i_3$ such that $u_{i_3}u_{i_2}u_{i_1}(\alpha)$ is a valid composition of size $10$. Thus, we have the following choices.
\begin{eqnarray*}
\begin{array}{lll}
u_1u_1u_1(\alpha)=(1,4,2,1,1,1) & u_1u_1u_2(\alpha)= (4,2,2,1,1) & u_1u_1u_3(\alpha)=(1,4,3,1,1)\\u_1u_1u_5(\alpha)=(1,2,5,1,1) & u_1u_2u_3(\alpha)=(4,3,2,1) & u_1u_2u_5(\alpha)=(2,5,2,1)\\u_1u_3u_5(\alpha)=(1,5,3,1) & u_2u_3u_5(\alpha)=(5,3,2) & \text{ }
\end{array}
\end{eqnarray*}
This gives us the following expansion for $\ncs_{(1,4,2)}\cdot \ncs_{(1,1,1)}$.
\begin{eqnarray*}
\ncs_{(1,4,2)}\cdot \ncs_{(1,1,1)}=&& \ncs_{(1,4,2,1,1,1)}+\ncs_{(4,2,2,1,1)}+\ncs_{(1,4,3,1,1)}+\ncs_{(1,2,5,1,1)}+\ncs_{(4,3,2,1)}\\&&+\ncs_{(2,5,2,1)}
+\ncs_{(1,5,3,1)}+\ncs_{(5,3,2)}
\end{eqnarray*}
\end{Example}

\section{Proof of validity of algorithm $\mu_i$}\label{section: backward jdt slides straight shape}
In this section, we will give proofs of validity for the results in Subsection \ref{subsection: results 1}. But prior to that, we will require certain results on $\SSRT$s. As stated earlier, we will be using the notation established at the start of Section \ref{section: Results} for the proofs in this section. 

A particular entry of $T$ that plays a significant role in what follows is the first entry that moves horizontally while computing $\jdt_{i+1}(T)$. This entry is $T_{j,i}$ where $j$ is the largest integer such that $T_{j,i} < T_{j-1,i+1}$. (We assume $T_{0,s}=\infty$ for all positive integers $s$.) We shall denote this entry by $f_{i+1}(T)$ (the reason behind the name being that it is the \textit{f}irst entry that moves horizontally). We will use this in the next couple of lemmas, whose proofs are straightforward.
\begin{Lemma}
Let $f_{i+1}(T)=T_{r,i}$. Let $T'$ be the tableau defined below.
\begin{eqnarray*}
&T'_{p,q}=T_{p,q} \text{ if } q\neq i\\
&T'_{p,i}=T_{p,i} \text{ if } p<r\\
&T'_{p,i}=T_{p+1,i} \text{ if } p\geq r.
\end{eqnarray*}
Then $T'$ is an $\SSRT$.
\end{Lemma}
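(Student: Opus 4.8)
The plan is to check the two defining properties of an $\SSRT$ for $T'$ directly, namely weak decrease along rows and strict decrease up columns, after first pinning down the shape of $T'$. Comparing the three defining equations, $T'$ agrees with $T$ outside column $i$, while inside column $i$ the entry $T_{r,i}$ has been deleted and every entry above it slid down by one row. Hence the shape of $T'$ is the shape of $T$ with the topmost box of column $i$ removed. I would first argue this is again a straight (partition) shape: the standing hypothesis is that $\lambda$ has an addable node in column $i+1$, and in the French convention this forces column $i$ of $\lambda$ to be strictly taller than column $i+1$. Consequently the topmost box of column $i$ is a removable corner of $\lambda$, so deleting it yields a partition; the same height inequality also guarantees that whenever a box $(p,i+1)$ survives in $T'$ its row index $p$ is strictly below the height of column $i$, which is what makes the boundary cases below behave.

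For the column condition, every column $q\neq i$ is copied verbatim from $T$ and so inherits strict decrease. In column $i$ the entries of $T'$, read bottom to top, are those of $T$ with $T_{r,i}$ excised; every adjacency except one already occurred in $T$, the single new adjacency being between $T_{r-1,i}$ and $T_{r+1,i}$ (or there is no new adjacency at all if $r=1$). Strictness there is immediate from the chain $T_{r-1,i}>T_{r,i}>T_{r+1,i}$ valid in $T$, so column $i$ of $T'$ is strictly decreasing.

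The row condition is where the definition of $r=f_{i+1}(T)$'s row enters. Rows $p<r$ are identical to those of $T$ and need no checking. For $p\geq r$ the only altered entry of row $p$ is the column-$i$ entry, now equal to $T_{p+1,i}$, so I must re-examine just two inequalities: the left one $T_{p,i-1}\geq T_{p+1,i}$ (when $i>1$) and the right one $T_{p+1,i}\geq T_{p,i+1}$ (when the box $(p,i+1)$ is present). The left inequality is routine, following from weak row decrease and strict column decrease in $T$, since $T_{p,i-1}\geq T_{p,i}>T_{p+1,i}$. The right inequality is the crux: because $r$ is by definition the \emph{largest} index $j$ with $T_{j,i}<T_{j-1,i+1}$, every index $j>r$ must satisfy $T_{j,i}\geq T_{j-1,i+1}$; applying this with $j=p+1>r$ gives exactly $T_{p+1,i}\geq T_{p,i+1}$.

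I expect the only substantive point to be this right inequality, that is, recognizing that the characterization of $f_{i+1}(T)$ as the entry in the largest row $j$ with $T_{j,i}<T_{j-1,i+1}$ is equivalent to the reverse inequality $T_{j,i}\geq T_{j-1,i+1}$ holding for all $j>r$, and that this reverse inequality is precisely the row condition demanded after the downward shift. Everything else is bookkeeping, with the addable-node hypothesis invoked only to keep the shape valid and the boundary boxes present; I would record that check carefully but not dwell on it.
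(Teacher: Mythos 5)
Your proof is correct and follows essentially the same route as the paper's: reduce to the two row inequalities $T'_{p,i-1}\geq T'_{p,i}$ and $T'_{p,i}\geq T'_{p,i+1}$ for $p\geq r$, settle the first via $T_{p,i-1}\geq T_{p,i}>T_{p+1,i}$, and settle the second by unpacking the maximality of $r$ in the definition of $f_{i+1}(T)$. The only difference is one of emphasis: you spell out the shape and column checks that the paper dispatches in a single remark, which is harmless extra bookkeeping.
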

\begin{Remark}
In words, $T'$ is obtained from $T$ by first removing $T_{r,i}$ and sliding all entries above it in the $i$-th column down by one row. Note that the rest of the tableau is left unchanged.
\end{Remark}
\begin{proof}
The only thing we need to check is that the entries in each row of $T'$ are
weakly decreasing when read from left to right. More specifically, it
follows from the preceding remark that we only need to check if $T'_{j,i} \geq T'_{j,i+1}$ and $T'_{j,i-1} \geq T'_{j,i}$ (assuming $i\geq 2$ for this case) for $j\geq r$. 

The former inequality is clear since $T'_{j,i}=T_{j+1,i}$ and $T'_{j,i+1}=T_{j,i+1}$ for $j\geq r$ and by our hypothesis $T_{r,i}$ is the first entry that moves horizontally when a backward jeu de taquin slide is initiated from the $i+1$-th column. 

Now we will show that $T'_{j,i-1} \geq T'_{j,i}$ for $j\geq r$. We have 
$T'_{j,i}=T_{j+1,i}$ and $T'_{j,i-1}=T_{j,i-1}$. But it is clear that $T_{j,i-1} > T_{j+1,i}$ in any $\SSRT$ $T$. Hence the claim follows.
\end{proof}

We will denote the $\SSRT$ $T'$ above by $\tjdt _{i+1}(T)$ (the reason behind the name $\tjdt$ is that it stands for a `truncated' jeu de taquin slide).
Furthermore, we define $\tjdt_{1}(T)=T$, as no entries move horizontally when a backward jeu de taquin slide is initiated from the first column.
\begin{Example}
Let 
\begin{eqnarray*}
T=
\ytableausetup{mathmode,boxsize=1.2em}
\begin{ytableau}
6\\7 & 6\\ 8 & 7 & 4\\10 & 8 & 5\\11 &9 &9 &8
\end{ytableau}.
\end{eqnarray*}
If we consider a backward jeu de taquin slide from the addable node in column $3$, then we have $f_3(T)=T_{2,2}=8$. Thus
\begin{eqnarray*}
\emph{tjdt}_{3}(T)=
\ytableausetup{mathmode,boxsize=1.2em}
\begin{ytableau}
6\\7 \\ 8 & 6 & 4\\10 & 7 & 5\\11 &9 &9 &8
\end{ytableau}.
\end{eqnarray*}
\end{Example}
In the computation of $\jdt_{i+1}(T)$, clearly $i$ entries in $T$ move horizontally. It is crucial for us to know these entries. Define
\begin{eqnarray*}
H_{i+1}(T)&=&\text{ multiset of entries in }T\text{ that move horizontally.}
\end{eqnarray*}
We define $H_{1}(T)$ to be the empty set.
With this definition at hand, we state a second lemma.
\begin{Lemma}
\begin{eqnarray*}
H_{i}(\tjdt_{i+1}(T))\cup \{f_{i+1}(T)\}&=&H_{i+1}(T).
\end{eqnarray*}
In the equality above, we are considering union of multisets.
\end{Lemma}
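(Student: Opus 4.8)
The plan is to compare the backward slide $\jdt_{i+1}(T)$ with the slide $\jdt_i(\tjdt_{i+1}(T))$ one step at a time, and to show that once a short initial phase of each is stripped away, the two slides become literally identical as operations on the relevant boxes. Write $f_{i+1}(T)=T_{r,i}$, so that $r$ is the largest index with $T_{r,i}<T_{r-1,i+1}$. First I would record the structure of $\jdt_{i+1}(T)$ before and at its first horizontal move: by definition of $f_{i+1}(T)$ the hole descends straight down column $i+1$ with no horizontal move, and the first horizontal move is exactly the step in which $f_{i+1}(T)=T_{r,i}$ slides rightward out of $(r,i)$ into column $i+1$, leaving the hole at $(r,i)$. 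Thus, up to and including this move, the only entry that has moved horizontally is $f_{i+1}(T)$, the columns strictly to the left of column $i$ are untouched, and column $i$ carries every entry $T_{q,i}$ with $q\neq r$ in its original row, with a hole in row $r$.

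Next I would analyze the opening moves of $\jdt_i(\tjdt_{i+1}(T))$. By the previous lemma $\tjdt_{i+1}(T)$ is an $\SSRT$, obtained from $T$ by deleting $T_{r,i}$ and sliding the entries above it in column $i$ down one row; consequently the unique addable node in column $i$ of its shape is the box previously occupied by the top of column $i$, and the slide starts there. The key observation is that this slide descends \emph{purely vertically} until the hole reaches row $r$: whenever the hole sits at a box $(a,i)$ with $a>r$, the entry directly below it is the original entry $T_{a,i}$, while the entry to its immediate left is $T_{a,i-1}$, and the row-weak-decrease condition of an $\SSRT$ gives $T_{a,i-1}\ge T_{a,i}$, so the vertical candidate is always the (weakly) smaller one and is selected. (One checks here that the left box always exists, since column $i-1$ is at least as tall as column $i$, and that the shape of $\tjdt_{i+1}(T)$ really does have an addable node in column $i$, using that an addable node in column $i+1$ of $\lambda$ forces column $i+1$ to be short enough.) These vertical moves slide each displaced entry of column $i$ back up by one row, so after this initial descent the hole again rests at $(r,i)$ with every $T_{q,i}$ ($q\neq r$) restored to its original row and the columns to the left unchanged.

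The two intermediate configurations produced above then agree exactly on all columns of index at most $i$ — which is the whole region either remaining slide can ever visit, since in a backward slide the hole only moves down and left. Hence the portions of the two slides after their respective initial phases are identical step for step, and in particular move precisely the same entries horizontally. Since $\jdt_{i+1}(T)$ contributes the single extra horizontal entry $f_{i+1}(T)$ during its initial phase while $\jdt_i(\tjdt_{i+1}(T))$ contributes none, the multiset equality $H_{i+1}(T)=H_i(\tjdt_{i+1}(T))\cup\{f_{i+1}(T)\}$ follows. The step I expect to require the most care is the middle paragraph: verifying that the opening descent of $\jdt_i(\tjdt_{i+1}(T))$ is genuinely unbroken all the way down to row $r$ and that the resulting configuration coincides, box for box, with the post-first-move configuration of $\jdt_{i+1}(T)$, so that the two slides truly ``merge'' at $(r,i)$; this is precisely the point where the row-weak-decrease property of $\SSRT$s and the maximality of $r$ in the definition of $f_{i+1}(T)$ are put to work.
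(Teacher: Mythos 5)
Your proof is correct and takes essentially the same approach as the paper's: your middle paragraph's key observation (that the slide on $\tjdt_{i+1}(T)$ started from column $i$ descends purely vertically until the hole reaches row $r$) is exactly what the paper proves, contrapositively, by noting that a horizontal move at a row $j>r$ would force $T_{j,i-1}<T_{j,i}$, contradicting that $T$ is an $\SSRT$; the paper then compresses your first and third paragraphs (the box-for-box agreement of the two configurations in columns $\leq i$ and the consequent merging of the two slides at $(r,i)$) into the remark that this ``clearly suffices.'' The only slight difference is that the paper disposes of $i=1$ as a separate trivial case, whereas your appeal to the box on the immediate left tacitly assumes $i\geq 2$ (though for $i=1$ your descent is vertical by default, so nothing breaks).
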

\begin{proof}
Let $T'=\tjdt_{i+1}(T)$ and
$f_{i+1}(T)=T_{r,i}$. Our construction of $T'$ implies that there is an addable node in column
$i$ in the partition shape underlying $T'$. Hence, we can initiate a backward jeu de taquin slide from the
$i$-th column of $T'$ and $H_{i}(T')$ is well-defined.

If $i=1$, then we have nothing to prove. Hence, we can assume that $i\geq 2$. We will show that no entry of the form $T'_{j,i-1}$ with $j>r$ moves horizontally when a backward jeu de taquin slide is initiated from column $i$ in $T'$, as this clearly suffices to establish the claim. 

Suppose this was not the case. Then there exists $j>r$ such that 
\begin{align*}
T'_{j,i-1}<T'_{j-1,i}.
\end{align*}
Since $T'_{j,i-1}=T_{j,i-1}$ and $T'_{j-1,i}=T_{j,i}$ for $j>r$, the inequality above implies
\begin{align*}
T_{j,i-1}<T_{j,i}.
\end{align*}
But this contradicts the fact that $T$ is an $\SSRT$. 
\end{proof}
\begin{Example}
Let 
\begin{eqnarray*}
T=
\ytableausetup{mathmode,boxsize=1.2em}
\begin{ytableau}
6\\7 & 6\\ 8 & 7 & *(green)2\\10 & *(green)8 & 5&3\\*(green)11 &9 &9 &8
\end{ytableau}.
\end{eqnarray*}
If we consider a backward jeu de taquin slide from the addable node in
column $4$, then the entries in the boxes coloured green move horizontally. Thus, we have
\begin{eqnarray*}
H_{4}(T)=\{2,8,11\} \text{ and } f_{4}(T)=2.
\end{eqnarray*}
We also have that
\begin{eqnarray*}
\emph{tjdt}_{4}(T)=
\ytableausetup{mathmode,boxsize=1.2em}
\begin{ytableau}
6\\7 & 6\\ 8 & 7\\10 & *(green)8 & 5&3\\*(green)11 &9 &9 &8
\end{ytableau}.
\end{eqnarray*}
Now if one does a backward jeu de taquin slide on the tableau above
from the third column, then the entries coloured green move
horizontally, therefore
$H_3(\emph{tjdt}_{4}(T))=\{8,11\}$. Thus, we can verify in this case that 
\begin{eqnarray*}
H_4(T)=H_{3}(\emph{tjdt}_{4}(T))\cup f_{4}(T).
\end{eqnarray*}
\end{Example}

The lemma above allows us to recursively compute all the entries that move horizontally when a backward jeu de taquin slide is performed from an addable node in column $i+1$. We already know how to obtain $\tjdt_{i+1}(T)$ given an $\SSRT$ $T$. Before we define a backward jeu de taquin slide for $\SSRCT$s, let us consider the relatively easier question of computing $\rho^{-1}(\tjdt_{i+1}(T))$ given $\tau$. This is precisely what the algorithm $\phi_{i+1}$ described in Subsection \ref{subsection: results 1} achieves.
We will establish that $\phi_{i+1}$ maps $\SSRCT$s to $\SSRCT$s such that the following diagram commutes.
\begin{eqnarray*}
\xymatrix{ \text{$\SSRT$s} \ar[d]_{\tjdt_{i+1}} \ar[r]^{\rho^{-1}} & \text{$\SSRCT$s}\ar[d]^{\phi_{i+1}}\\ \text{$\SSRT$s} \ar[r]^{\rho^{-1}} & \text{$\SSRCT$s}}
\end{eqnarray*}

To allow for cleaner proofs, we will now describe how we will think of $T$ and $\tau$ diagrammatically. We will think of $T$ as aligned to the left in a rectangular Ferrers diagram where 
\begin{eqnarray*}
\text{number of rows, } n &=& l(\lambda),\\
\text{number of columns} &\geq & \lambda_1+1.
\end{eqnarray*} 
The empty boxes of this Ferrers diagram are assumed to be filled with $0$s. 
We will call this diagram $T_{\square}$. Note that the $0$s do not play any role as far as doing the backward jeu de taquin slides are concerned. 

Similarly, we will think of $\tau$ as being placed left aligned in a rectangular Ferrers diagram where
\begin{eqnarray*}
\text{number of rows, } n &=& l(\alpha),\\
\text{number of columns} &\geq & \lambda_1+1.
\end{eqnarray*} 
The empty boxes of this Ferrers diagram are assumed to be filled with $0$s. We will assume that the dimensions of the rectangular Ferrers diagram are the same for both $T$ and $\tau$.
The exact dimensions will not matter as long as they satisfy the
constraints mentioned above. We will call the rectangular filling $\tau_{\square}$.

For computing $\tjdt_{i+1}(T)$, we only need to know what the entries in columns $i$ and $i+1$ are. Since we will work with $T_{\square}$ we will take the $0$s into account when considering the $i$-th and $i+1$-th columns. Let the $i$-th and $i+1$-th columns of $T_{\square}$ be as shown below.
\begin{eqnarray*}
\ytableausetup{mathmode,boxsize=1.3em}
\begin{ytableau}
a_1 & b_1\\a_2& b_2\\ \none[\vdots] & \none[\vdots]\\a_n & b_n
\end{ytableau}
\end{eqnarray*}
Furthermore, let $a_{n+1}=b_{n+1}=\infty$. Since we have assumed that there is an addable node in the $i+1$-th column of $\lambda$, we know that 
\begin{align}\label{existenceoutercorner}
\lvert\{b_r: b_r=0, \text{ }1\leq r\leq n\}\rvert >\lvert\{a_r: a_r=0,\text{ }
  1\leq r\leq n\}\rvert.
\end{align}

Let 
\begin{empheq}[box=\mybluebox]{equation*}
min= \text{smallest positive integer such that }a_{min}<b_{min+1}.
\end{empheq}
Then, we have
that 
\begin{empheq}[box=\mybluebox]{equation*}
a_{min}=f_{i+1}(T).
\end{empheq} 
Clearly, $a_{min}>0$. Another trivial observation, that we will use without mentioning is that $a_p=a_q$ for some $1\leq p,q\leq n$ if and only if $a_p=a_q=0$.

Corresponding to $T_{\square}$ above, the $i$-th and $i+1$-th columns of $\tau_{\square}$ are as shown below.
\begin{eqnarray*}
\ytableausetup{mathmode,boxsize=1.3em}
\begin{ytableau}
c_{1} & d_{1}\\c_{2}& d_{2}\\ \none[\vdots] & \none[\vdots]\\ c_{n} & d_{n}
\end{ytableau}
\end{eqnarray*}

Clearly the sequence $c_{1}, c_2,\ldots, c_{n}$ is a rearrangement of the sequence $a_{1}, a_2,\ldots, a_{n}$. A similar statement holds for the sequences $d_{1}, d_2,\ldots, d_{n}$ and $b_{1}, b_2,\ldots, b_{n}$.

Now let us consider what we require from the procedure
$\phi_{i+1}$. We would like it to locate $f_{i+1}(T)$ in
$\tau$ and then remove it from $\tau$ and rearrange
the remaining entries so that what we have is still an $\SSRCT$. The algorithm to
accomplish this will be presented after some intermediate lemmas. For
all the lemmas that follow, we will assume that \textbf{$\mathbf{s_1}$ is the
positive integer such that }
\begin{empheq}[box=\mybluebox]{equation*}
c_{s_1}=a_{min}.
\end{empheq} 

Our first lemma implies that $a_{min}$ does not equal any other entry in the $i$-th and $i+1$-th columns of $T_{\square}$. We know that $a_{min}>0$. Hence it clearly can not equal any other entry in the $i$-th column since $T$ is an $\SSRT$. To see that $a_{min}$ does not equal any entry in the $i+1$-th column, the following suffices.
\begin{Lemma}\label{lemma0}
$a_{min}\neq b_{min}$.
\end{Lemma}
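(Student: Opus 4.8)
The plan is to argue by contradiction: assume $a_{min}=b_{min}$, and exploit two structural facts about columns $i$ and $i+1$ of $T_{\square}$ together with the minimality built into the definition of $min$. First I would record how the $\SSRT$ column condition reads off in $T_{\square}$. Reading each of these two columns from top to bottom, the empty boxes (the $0$s) occur first and are then followed by the genuine entries, which strictly increase as one moves downward (this is precisely the ``strictly decreasing from bottom to top'' condition, transported to the display orientation). Consequently both sequences $a_1,\dots,a_n$ and $b_1,\dots,b_n$ are weakly increasing, and an equality between two entries of the same column can occur only among $0$s. I would also note that \eqref{existenceoutercorner} forces column $i+1$ to contain at least one $0$; since the $0$s sit at the top of the column, this gives $b_1=0$. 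Finally, since $a_{min}=f_{i+1}(T)>0$, the contradiction hypothesis $a_{min}=b_{min}$ makes $b_{min}>0$ as well, so $a_{min-1},a_{min},b_{min}$ may be treated as genuine positive entries.

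With these observations in hand the argument splits on the value of $min$. If $min=1$, then $b_{min}=b_1=0$ while $a_{min}>0$, so $a_{min}\neq b_{min}$ is immediate. If $min\geq 2$, I would invoke minimality: because $min$ is the \emph{smallest} positive integer with $a_{min}<b_{min+1}$, the defining inequality fails at the index $min-1$, yielding $a_{min-1}\geq b_{min}$. Under the hypothesis $b_{min}=a_{min}$ this becomes $a_{min-1}\geq a_{min}$. On the other hand, column $i$ is weakly increasing downward, so $a_{min-1}\leq a_{min}$. Squeezing the two inequalities gives $a_{min-1}=a_{min}$, and this common value equals $a_{min}>0$. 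Thus two distinct rows of column $i$ would carry equal positive entries, contradicting the strict increase of the genuine entries in a column of an $\SSRT$. This contradiction establishes $a_{min}\neq b_{min}$.

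The step carrying the real content is the use of minimality to extract $a_{min-1}\geq b_{min}$ and the subsequent squeeze to $a_{min-1}=a_{min}$; the remainder is bookkeeping about where the $0$s lie. The point demanding the most care is the boundary case $min=1$, where the squeeze argument has no room to operate and one must instead fall back on \eqref{existenceoutercorner} to force $b_{min}=0$. It is also worth verifying explicitly that the hypothesis $a_{min}=b_{min}$ together with $a_{min}>0$ rules out the degenerate possibility that the relevant entries are $0$, which is exactly what legitimizes the strict-increase contradiction in the case $min\geq 2$.
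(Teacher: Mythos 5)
Your proof is correct and takes essentially the same route as the paper's: both arguments rest on $b_1=0$ (from \eqref{existenceoutercorner}) handling the case $min=1$, the minimality of $min$ applied at index $min-1$, and the strict increase of the positive entries down column $i$. The only difference is cosmetic --- the paper runs the squeeze so that the final contradiction is with the minimality of $min$ (deriving $a_{min-1}<b_{min}$), whereas you run it so that the contradiction is two equal positive entries $a_{min-1}=a_{min}$ in one column; the ingredients are identical.
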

\begin{proof}
Assume, contrary to what is to be established, that $a_{min}=b_{min}$. Since $a_{min}>0$, we get that $min>1$, as
$b_1$ is definitely $0$ given the inequality in \eqref{existenceoutercorner}. Now, since $a_{min}> a_{min-1}$, our assumption gives $a_{min-1}<b_{min}$. But this contradicts the definition of $min$.
\end{proof}
Our next lemma deals with entries greater than $c_{s_1}=a_{min}$ in $\tau_{\square}$.
\begin{Lemma}\label{lemma1}
$c_{t} > c_{s_1} \Longleftrightarrow d_{t} \geq b_{min+1}$.
\end{Lemma}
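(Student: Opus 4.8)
The plan is to prove the two implications separately, exploiting that the forward implication ($\Leftarrow$) is essentially free, while the reverse implication ($\Rightarrow$) — which is the genuine content — can be obtained by a counting argument rather than by analyzing the placement rule of $\rho^{-1}$ directly.

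First I would record the elementary facts about the two columns. Since the nonzero entries in a column of an $\SSRT$ strictly increase from top to bottom and the empty cells are padded by $0$s at the top, the sequence $a_1,\ldots,a_n$ is weakly increasing, with its positive entries strictly increasing and hence pairwise distinct; the same holds for $b_1,\ldots,b_n$. Because $a_{min}=f_{i+1}(T)>0$ and $a_{min}<b_{min+1}$ by the definition of $min$, we get $b_{min+1}>a_{min}>0$. Consequently the column-$i$ entries exceeding $a_{min}$ are exactly $a_{min+1}<\cdots<a_n$, and the column-$(i+1)$ entries that are $\geq b_{min+1}$ are exactly $b_{min+1}<\cdots<b_n$; each of these two collections has precisely $n-min$ elements. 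As $c_1,\ldots,c_n$ (resp.\ $d_1,\ldots,d_n$) is a rearrangement of $a_1,\ldots,a_n$ (resp.\ $b_1,\ldots,b_n$), this yields
\[
\lvert\{t: c_t>c_{s_1}\}\rvert=n-min=\lvert\{t: d_t\geq b_{min+1}\}\rvert,
\]
where I have used $c_{s_1}=a_{min}$.

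Next I would dispatch the easy implication $d_t\geq b_{min+1}\Rightarrow c_t>c_{s_1}$. If $d_t\geq b_{min+1}>0$, then the box $(t,i+1)$ is a genuine box of $\tau$, hence so is $(t,i)$ by left-justification, and the weakly decreasing row condition for $\SSRCT$s gives $c_t\geq d_t$. Combining these, $c_t\geq d_t\geq b_{min+1}>a_{min}=c_{s_1}$, so $c_t>c_{s_1}$. This establishes the containment $\{t:d_t\geq b_{min+1}\}\subseteq\{t:c_t>c_{s_1}\}$.

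The reverse implication is the main obstacle: a priori nothing forces a large column-$i$ entry to even possess a right neighbour, let alone one that is $\geq b_{min+1}$. The point of the approach is that this difficulty evaporates once the cardinalities are in hand — the containment above together with the equality of cardinalities from the first step forces $\{t:d_t\geq b_{min+1}\}=\{t:c_t>c_{s_1}\}$, which is exactly the claimed equivalence. I expect the only steps needing care to be the bookkeeping of the two cardinalities — in particular verifying that $a_{min}$ and $b_{min+1}$ are each attained exactly once and that all the relevant padding $0$s lie strictly below the thresholds $a_{min}$ and $b_{min+1}$ in value — together with confirming that the weakly decreasing row condition of $\SSRCT$s may legitimately be invoked after excluding the degenerate case $d_t=0$.
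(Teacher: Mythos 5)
Your proof is correct and follows essentially the same route as the paper: the paper likewise establishes that an entry $\geq b_{min+1}$ in column $i+1$ can only neighbour an entry exceeding $c_{s_1}$ (the easy direction, via the weakly decreasing row condition), and then forces the converse by the cardinality identity $\lvert\{c_j : c_j > c_{s_1}\}\rvert = \lvert\{b_j : b_j \geq b_{min+1}\}\rvert = n - min$. Your only cosmetic deviation is counting indices rather than values, which sidesteps the paper's brief remark that its two collections are genuine sets rather than multisets.
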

\begin{proof}
Since $c_{s_1}=a_{min}$, we know that
$b_{min+1}>c_{s_1}$. Thus, $b_{min+1}>c_{j}$ for all
$1\leq j\leq n$ satisfying $c_{j}\leq c_{s_1}$. Thus any entry in the $i+1$-th
column of $T_{\square}$ that is $\geq b_{min+1}$ can not be the neighbour of any entry that is $\leq c_{s_1}$
in the $i$-th column of $\tau_{\square}$. Consider the following sets.
\begin{eqnarray*}
X&=& \{c_{j}: c_{j}> c_{s_1}\} \\
Y&=& \{b_j: b_j\geq b_{min+1}\}
\end{eqnarray*}
The fact that $X$ and $Y$ are sets, and not multisets, follows from the fact that $b_{min+1}>c_{s_1}=a_{min}>0$. Now, we know from our argument at the beginning of this proof that an element of $Y$ can
only 
neighbour an element of $X$ in $\tau_{\square}$. But $\lvert X\rvert=\lvert
Y\rvert$, as $X$ is the same as $\{a_{j}: a_{j}> a_{min}\}$. Hence each element of $Y$ is the neighbour to a unique
element of $X$, and each element of $X$ has a neighbour that is an
element of $Y$. Hence the claim follows.
\end{proof}
We note down an important consequence of the above lemma in the following remark.
\begin{Remark}
Consider the
$i$-th and $i+1$-th columns of $T_{\square}$ again.
\begin{eqnarray*}
\ytableausetup{mathmode,boxsize=3em}
\begin{ytableau}
*(mygreen)a_1 & *(mygreen)b_1\\\none[\vdots] & \none[\vdots]\\*(mygreen)a_{min}&
*(mygreen)b_{min}\\*(orange)a_{min+1}& *(orange)b_{min+1}\\ \none[\vdots] & \none[\vdots]\\*(orange)a_n & *(orange)b_n
\end{ytableau}
\end{eqnarray*}
The boxes in the $i$-th column that contain entries 
$\leq a_{min}$ and the boxes in the $i+1$-th column
that contain entries $\leq b_{min}$ are coloured green. All other
boxes, that is, the boxes in the $i$-th column that contain entries 
$> a_{min}$ and the boxes in the $i+1$-th column
that contain entries $> b_{min}$ are coloured orange. Notice that the sets $X$ and $Y$ defined in the proof of Lemma \ref{lemma1} correspond to the orange entries in the $i$-th column and $i+1$-th column respectively. Then, Lemma
\ref{lemma1} implies that in
$\tau_{\square}$, the values belonging to the green boxes in
$T_{\square}$ get neighbours from the green boxes only, while the
values that belong to the orange boxes in $T_{\square}$ get neighbours
from the orange boxes only.
\end{Remark}

Recall now that $s_1$ is the positive integer such that $c_{s_1}=a_{min}$. Define recursively integers $s_j$ satisfying $1\leq s_j\leq n$ for $j\geq 2$ using the following criteria:
\begin{enumerate}[I.]
\item $s_{j}>s_{j-1}$,
\item $c_{s_j}$ is the greatest positive integer that satisfies
 \begin{empheq}[box=\mybluebox]{equation*} 
 c_{s_{j-1}} > c_{s_j} \geq d_{s_{j-1}}.
 \end{empheq}
\end{enumerate}
Clearly, only finitely many $s_j$ exist. Let the maximum be given by $s_k$ and let 
\begin{empheq}[box=\mybluebox]{equation*} 
S=\{s_1,\ldots ,s_k\}.
\end{empheq}

With this setup, we have the following lemma.
\begin{Lemma}\label{precruciallemma}
If $t>s_j$ for some $1\leq j\leq k$ such that $c_{t}>c_{s_j}$, then $c_{t}>c_{s_1}$.
\end{Lemma}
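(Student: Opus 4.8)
The plan is to argue by induction on $j$. The base case $j=1$ is immediate: the hypotheses are $t>s_1$ and $c_t>c_{s_1}$, and the desired conclusion is exactly $c_t>c_{s_1}$. So I would spend all the effort on the inductive step, where the recursive definition of the $s_j$ via criteria I and II is brought to bear.

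For the inductive step, I fix $j\geq 2$, assume the statement for $j-1$, and suppose $t>s_j$ with $c_t>c_{s_j}$. Criterion I gives $s_j>s_{j-1}$, so $t>s_j>s_{j-1}$, and in particular the index $t$ is an admissible input for the statement at level $j-1$. I would then split on how $c_t$ compares with $c_{s_{j-1}}$. In the first (easy) case $c_t>c_{s_{j-1}}$, I simply invoke the induction hypothesis at level $j-1$ with this same $t$ (legitimate since $t>s_{j-1}$ and $c_t>c_{s_{j-1}}$), which yields $c_t>c_{s_1}$ directly.

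The second case, $c_t\leq c_{s_{j-1}}$, is where the real content lies, and it is the step I expect to be the main obstacle; my strategy is to rule it out by contradiction using the maximality built into criterion II. Here I would first upgrade the inequality to $c_t<c_{s_{j-1}}$: since $c_{s_j}\geq 1$ and $c_t>c_{s_j}$, the entry $c_t$ is a positive integer, and so is $c_{s_{j-1}}\geq c_t$; recalling that the positive entries of the $i$-th column are pairwise distinct (being a rearrangement of the strictly decreasing positive values $a_r$), and that $t\neq s_{j-1}$, the equality $c_t=c_{s_{j-1}}$ is excluded. Combining $c_{s_{j-1}}>c_t$ with $c_t>c_{s_j}\geq d_{s_{j-1}}$ shows that $c_t$ is a positive integer lying in the window $c_{s_{j-1}}>c_t\geq d_{s_{j-1}}$ occurring at an index $t>s_{j-1}$, and yet $c_t>c_{s_j}$. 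This directly contradicts criterion II, which decrees that $c_{s_j}$ is the \emph{greatest} positive integer satisfying precisely these constraints. Hence the second case is impossible, the inductive step follows, and the claim is proved for all $j$.
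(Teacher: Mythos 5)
Your proof is correct and takes essentially the same approach as the paper's: the paper likewise reduces to showing $c_t > c_{s_{j-1}}$ (its phrase ``as this suffices to establish the claim'' is precisely your induction on $j$), and it excludes $c_t \leq c_{s_{j-1}}$ by the same appeal to the maximality in criterion II together with the distinctness of positive entries in a column. The only difference is organizational: the paper argues that $c_t$ cannot satisfy the window inequalities and hence $c_t \geq c_{s_{j-1}}$, then upgrades to strict inequality, whereas you split into cases and derive the contradiction directly; these are logically the same step.
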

\begin{proof}
If $j=1$, the claim is clearly true. Hence assume $j\geq 2$. Shown below is the configuration in $\tau_{\square}$ that we will focus on.
\begin{eqnarray*}
\ytableausetup{mathmode,boxsize=2.5em}
\begin{ytableau}
c_{s_{j-1}} & d_{s_{j-1}}\\ \none[\vdots] & \none[\vdots]\\c_{s_{j}}& d_{s_{j}}\\\none[\vdots]\\c_{t}
\end{ytableau}
\end{eqnarray*}
Firstly, note that since $c_{s_1}>c_{s_2}>\cdots
>c_{s_k}$, we can not have $t$ being equal to any $s_i$ for
some $1\leq i\leq k$. For if $t=s_i$ for some $i$, then we know that
$c_{t} < c_{s_j}$ for all $s_j<t$. This contradicts
our assumption on $t$.

We will now show that the hypothesis implies $c_{t}>c_{s_{j-1}}$, as this suffices to establish the claim. We know that
\begin{align}\label{satisfied}
 c_{s_{j-1}}>c_{s_{j}}\geq d_{s_{j-1}}.
\end{align} 
We also know, by the definition of $s_j$, that $s_{j}>s_{j-1}$ is such that $c_{s_j}$ is the greatest positive integer satisfying the inequalities in \eqref{satisfied}. Since $t>s_j$, we know that $c_{t}$ does not satisfy 
\begin{eqnarray}\label{notsatisfied}
c_{s_{j-1}}>c_{t}\geq d_{s_{j-1}}.
\end{eqnarray}  
But clearly $c_{t}$ satisfies $c_{t}>d_{s_{j-1}}$, by using the hypothesis that $c_{t}>c_{s_j}$ and \eqref{satisfied}. Thus, we must have that $c_{t}\geq c_{s_{j-1}}$, if $c_{t}$ is not to satisfy \eqref{notsatisfied}. Now, since $c_{s_{j-1}}$ is positive by definition, we get that so is $c_t$. Thus, in fact, $c_t>c_{s_{j-1}}$.
\end{proof}

\begin{Lemma}\label{precruciallemma2}
Let $t$ be a positive integer such that $s_j<t\neq s_{j+1}$ for some $1\leq j\leq k-1$. Then exactly one of the following holds.
\begin{enumerate}
\item $c_{t}>c_{s_1}.$
\item $c_{t}<c_{s_{j+1}}.$
\end{enumerate}
\end{Lemma}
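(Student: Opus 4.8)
The plan is to establish the ``exactly one'' assertion in two steps: mutual exclusivity of the two alternatives, and then that at least one of them must hold. Mutual exclusivity is immediate. Since $1\leq j\leq k-1$, the index $j+1$ lies in $\{2,\ldots,k\}$, so the strictly decreasing chain $c_{s_1}>c_{s_2}>\cdots>c_{s_k}$ gives $c_{s_1}>c_{s_{j+1}}$. If both (1) and (2) held, we would obtain $c_{s_1}<c_t<c_{s_{j+1}}<c_{s_1}$, a contradiction; hence at most one alternative holds.

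For the remaining direction I would assume (1) fails, i.e.\ $c_t\leq c_{s_1}$, and deduce (2). The main tool is the contrapositive of Lemma \ref{precruciallemma}: since $c_t\leq c_{s_1}$, for every index $m$ with $s_m<t$ one must have $c_t\leq c_{s_m}$, because $c_t>c_{s_m}$ together with $t>s_m$ would force $c_t>c_{s_1}$. Because $t>s_j$ and $t\neq s_{j+1}$, there are exactly two cases to consider, namely $t>s_{j+1}$ or $s_j<t<s_{j+1}$. In the first case, applying the observation with $m=j+1$ gives $c_t\leq c_{s_{j+1}}$; as $c_{s_{j+1}}>0$ and $t\neq s_{j+1}$ rule out equality (distinct positions can share a $c$-value only when both are $0$), we get $c_t<c_{s_{j+1}}$, which is (2).

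The case $s_j<t<s_{j+1}$ is where the real work lies, and I expect it to be the main obstacle, since now the only indices $m$ with $s_m<t$ are $1,\ldots,j$, so the observation yields merely $c_t<c_{s_j}$ and does not directly compare $c_t$ with $c_{s_{j+1}}$. Here I would argue by contradiction, supposing (2) also fails, so that $c_t\geq c_{s_{j+1}}$. Then $c_t>0$, and the defining inequality $c_{s_j}>c_{s_{j+1}}\geq d_{s_j}$ gives $c_t\geq d_{s_j}$. Hence $t$ meets every requirement appearing in the recursive definition of $s_{j+1}$: one has $t>s_j$ and $c_{s_j}>c_t\geq d_{s_j}$ with $c_t$ a positive integer. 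Since $c_{s_{j+1}}$ was selected to be the \emph{greatest} positive value admissible at that step, maximality forces $c_{s_{j+1}}\geq c_t$, and therefore $c_t=c_{s_{j+1}}$; but $c_{s_{j+1}}>0$ and $t\neq s_{j+1}$ make this equality impossible. This contradiction shows that (2) must hold, completing the proof.
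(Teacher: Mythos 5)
Your proof is correct and takes essentially the same approach as the paper's: both arguments rest on Lemma \ref{precruciallemma} together with the maximality of $c_{s_{j+1}}$ in its recursive definition, plus the fact that two positive entries in the column cannot coincide. The only differences are organizational: you split into cases according to the position of $t$ relative to $s_{j+1}$ and spell out the mutual exclusivity, whereas the paper splits according to the value of $c_t$ relative to $c_{s_j}$ and $c_{s_{j+1}}$ and leaves exclusivity implicit.
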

\begin{proof}
Notice that if $c_{t}>c_{s_j}$, then Lemma \ref{precruciallemma} implies that $c_{t}>c_{s_1}$, which is precisely the first condition in the claim.

Now, assume that $c_{s_j}>c_{t}>c_{s_{j+1}}$. The definition of $s_{j+1}$ implies that $c_{s_j}>c_{s_{j+1}}\geq d_{s_j}$. Thus we get that
\begin{eqnarray}\label{pcl1}
c_{s_j}>c_{t}\geq d_{s_j}.
\end{eqnarray}
But since $c_{t}>c_{s_{j+1}}$ and $t>s_j$, \eqref{pcl1} and the recursive definition of the elements of $S$ would imply that $s_{j+1}$ equals $t$. This is clearly not the case. Therefore $c_{t}\leq c_{s_{j+1}}$. Again using the fact that $c_{s_{j+1}}$ is positive, we get that $c_t<c_{s_{j+1}}$.
\end{proof}

This brings us to the following important lemma which states that $c_{s_k}$ occupies the rightmost box in the bottommost row of size $i$ in $\alpha$, where recall that $\alpha$ is the shape of $\tau$. Here we are identifying $\alpha$ with its reverse composition diagram. Establishing that $c_{s_k}$ occupies the rightmost box in a row of size $i$ is the same as proving that $d_{s_k}=0$. That it belongs to the bottommost row of length $i$ requires us to prove that an entry $c_t$ that lies strictly south of $c_{s_k}$ is either $0$ or is such that its neighbour $d_t>0$. 
\begin{Lemma}\label{cruciallemma}
\begin{enumerate}
\item $d_{s_k}=0$.
\item If $t>s_k$ and $c_{t}>0$, then $c_{t} > c_{s_1}$ and $d_t>0$.
\end{enumerate}
\end{Lemma}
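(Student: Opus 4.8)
The two assertions are of unequal difficulty, and once the first is known the second is essentially formal, so the plan is to establish $d_{s_k}=0$ first and then read off the rest. For the second statement, assume $d_{s_k}=0$ and let $t>s_k$ satisfy $c_t>0$. The maximality of $s_k$ means that no index exceeding $s_k$ is eligible to serve as $s_{k+1}$; since $d_{s_k}=0$, eligibility would only require $0<c_t<c_{s_k}$, so no $t>s_k$ can meet this. As the positive entries in column $i$ are pairwise distinct, every $t>s_k$ with $c_t>0$ must then have $c_t>c_{s_k}$, and Lemma \ref{precruciallemma} (applied with $j=k$) upgrades this to $c_t>c_{s_1}$. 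Lemma \ref{lemma1} now yields $d_t\geq b_{min+1}>0$, which delivers both conclusions of part (2).

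The heart of the lemma is therefore the claim $d_{s_k}=0$, which I would prove by contradiction. Suppose $d_{s_k}=e>0$. Since the rows of $\tau$ are weakly decreasing we have $c_{s_k}\geq e$, and since $c_{s_k}\leq c_{s_1}=a_{min}<b_{min+1}$ we also have $e<b_{min+1}$, so the value $e$ is ``small''. The goal is to exhibit an index $t>s_k$ with $c_t$ a positive integer lying in the interval $[e,c_{s_k})$: the greatest such $c_t$ would then qualify as $c_{s_{k+1}}$, contradicting the maximality of $s_k$.

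To locate such a $t$ I would exploit the $\SSRCT$ triple rule (condition (3) in the definition of an $\SSRCT$) for the pair of columns $i,i+1$, which in the present straight-shape setting reads: for rows $p<q$ with $d_q>0$ and $c_p\geq d_q$ one must have $d_p>d_q$. Taking $p=s_k$ shows that no row $q>s_k$ can have $d_q\in[e,c_{s_k}]$, since such a $q$ would give $d_{s_k}=e\leq d_q$ while the rule forces $d_{s_k}>d_q$; hence below row $s_k$ the column-$(i+1)$ entries avoid the window $[e,c_{s_k}]$. On the other hand, the failure to produce $s_{k+1}$ forces the column-$i$ entries below $s_k$ to avoid the window $[e,c_{s_k})$, the only positive column-$i$ entries there being either smaller than $e$ or, by Lemma \ref{precruciallemma}, larger than $c_{s_1}$. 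Comparing $T$ and $\tau$ should now give the contradiction: columns $i,i+1$ of $T$ and of $\tau$ carry the same multisets of entries, and the weak decrease along the rows of the $\SSRT$ $T$ guarantees at least as many column-$i$ entries as column-$(i+1)$ entries of value at least any given threshold, whereas the two avoidance statements confine every window occurrence of a column-$(i+1)$ entry (in particular the entry $e$ sitting at row $s_k$) to rows weakly above $s_k$ while pushing the corresponding column-$i$ partners strictly above $s_k$, leaving the occurrence $d_{s_k}=e$ with no available partner.

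I expect this last counting step to be the main obstacle: converting the two avoidance statements into a genuine contradiction requires a careful injection from window occurrences of column-$(i+1)$ entries into those of column-$i$ entries that respects vertical position, together with the verification that $e=d_{s_k}$ is the single occurrence that cannot be matched. A cleaner packaging of the same idea, which I would attempt first, is to prove the slightly stronger statement that $d_{s_j}>0$ implies $s_{j+1}$ can be defined; the terminal element $s_k$ of a finite chain then automatically satisfies $d_{s_k}=0$, and the control over the rows strictly between consecutive $s_j$ supplied by Lemmas \ref{precruciallemma} and \ref{precruciallemma2} is exactly the bookkeeping that the counting argument needs.
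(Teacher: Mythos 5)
Your part (2) is correct and is essentially the paper's own argument: once $d_{s_k}=0$ is known, any $t>s_k$ with $c_t>0$ would be eligible as $s_{k+1}$ unless $c_t>c_{s_k}$, and Lemmas \ref{precruciallemma} and \ref{lemma1} then give $c_t>c_{s_1}$ and $d_t\geq b_{min+1}>0$. Your plan for part (1) also follows the paper's strategy (argue by contradiction, work in a window of values, and compare columns $i$ and $i+1$ of $T$ against the same-row pairing in $\tau_{\square}$), but the decisive counting step --- which you yourself flag as the main obstacle --- is missing, and the sketch you give of it would not close up as stated, for two concrete reasons.

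First, weak row-domination $a_j\geq b_j$ in $T$ only gives \emph{weakly} as many column-$i$ entries as column-$(i+1)$ entries above any threshold; with equality the pairing produces no unmatched entry and hence no contradiction. The strict surplus you need comes from the definition of $min$: writing $e=d_{s_k}=b_r$, one shows $1<r\leq min$, and since $min$ is the \emph{smallest} index with $a_{min}<b_{min+1}$, the shifted inequality $a_{r-1}\geq b_r$ holds; this is exactly what makes the number of column-$i$ entries in $[e,c_{s_1}]$ strictly exceed the number of column-$(i+1)$ entries there (the paper's $\lvert X_r\rvert\geq \lvert Y_r\rvert+1$). Second, the contradiction is not that ``$d_{s_k}=e$ has no available partner'' --- its partner is $c_{s_k}$, sitting in the same row, and nothing forbids that pairing. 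The actual contradiction concerns the \emph{surplus} column-$i$ entry $c_j\in[e,c_{s_1}]$, $c_j\neq c_{s_k}$, whose right neighbour is forced to satisfy $d_j<e$: if $j>s_k$, then $c_j$ contradicts your second avoidance statement (failure of $s_{k+1}$ together with Lemma \ref{precruciallemma} puts positive column-$i$ entries below row $s_k$ outside $[e,c_{s_1}]$); hence $j<s_k$, and then the configuration $c_j\geq d_{s_k}>d_j\geq 0$ with row $j$ strictly above row $s_k$ is a triple rule violation against $d_{s_k}$. Note that this applies the triple rule with the surplus entry \emph{above} row $s_k$ and $d_{s_k}$ below, which is a different application from your first avoidance statement (about column-$(i+1)$ entries \emph{below} $s_k$); that statement is in fact not needed. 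Once these two points are supplied, your argument coincides with the paper's proof; the same remark applies to your proposed repackaging via ``$d_{s_j}>0$ implies $s_{j+1}$ exists,'' which still requires exactly this counting.
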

\begin{proof}
Consider first the $i$-th and $i+1$-th columns of $T_{\square}$.
For any positive integer $r$ satisfying $1\leq r\leq min$, consider the following multisets.
\begin{eqnarray*}
X_r&=&\{a_{j}: a_{min}\geq a_{j}\geq b_r\}\\
Y_r&=&\{b_j: b_{min}\geq b_j\geq b_r\}
\end{eqnarray*}
Notice that $X_r$ and $Y_r$ can be multisets only when $b_r=0$. Notice
further that if $b_r=0$, then the cardinalities of the multisets $X_r$
and $Y_r$ are equal. For a generic $r$, the following diagram
describes the elements of $X_r$ (boxes shaded purple in the $i$-th column) and $Y_r$ (boxes
shaded orange in the $i+1$-th column).

\begin{eqnarray*}
\ytableausetup{mathmode,boxsize=2.5em}
\begin{ytableau}
\scriptstyle a_1 & \scriptstyle \scriptstyle b_1\\\none[\vdots] & \none[\vdots]\\*(anotherblue)\scriptstyle a_{j}&
\scriptstyle b_{j}\\\none[\vdots] & \none[\vdots]\\ *(anotherblue)\scriptstyle a_{r}&
*(orange)\scriptstyle b_{r}\\\none[\vdots] &
\none[\vdots]\\ *(anotherblue)\scriptstyle a_{\scriptscriptstyle min} &
*(orange)\scriptstyle b_{\scriptscriptstyle min}\\\scriptstyle a_{\scriptscriptstyle min+1} & \scriptstyle b_{\scriptscriptstyle min+1}\\ \none[\vdots] & \none[\vdots]\\\scriptstyle a_n & \scriptstyle b_n
\end{ytableau}
\end{eqnarray*}

Assume, contrary to what is to be proved, that $d_{s_k}>0$. Then, we know that there exists a unique positive integer $r$ such that $1\leq r\leq n$ and $d_{s_k} =b_r$, and hence $b_r>0$. Note also that
$c_{s_k}\leq c_{s_1}<b_{min+1}$. This follows since $c_{s_1}=a_{min}$ and $a_{min}<b_{min+1}$.

Now, since
$c_{s_k}\geq d_{s_k}$, we have that
$$b_r=d_{s_k}<b_{min+1}.$$
Using the above, combined with the fact
fact that $b_1$ is always $0$, we get that $1< r\leq min$. Thus, we can consider the multisets $X_r$ and $Y_r$. In fact, they are sets consisting of positive integers as $b_r>0$. We have, by the
definition of $Y_r$, that $$d_{s_k}=b_r\in
Y_r.$$
Furthermore, since
$$a_{min}=c_{s_1}\geq c_{s_k}\geq d_{s_k}=b_r,$$ we
have $c_{s_k}\in X_r$.
Notice now the important fact that the elements of $Y_r$ can only be
neighbours of elements in $X_r$, when we consider their respective positions in the $i$-th and
$i+1$-th columns of $\tau_{\square}$. This follows from Lemma
\ref{lemma1}, in a manner similar to the remark following it.

We know that $a_{r-1}\geq b_r=d_{s_k}$ as $r\leq min$. Hence, we obtain the following inequality.
\begin{eqnarray*}\label{firstcrucialinequality}
|X_r|\geq |Y_r|+1
\end{eqnarray*}
This in turn implies
\begin{eqnarray}\label{secondcrucialinequality}
|X_r\setminus \{c_{s_k}\}|\geq |Y_r\setminus\{d_{s_k}\}|+1.
\end{eqnarray}
Now consider the $i$-th and $i+1$-th columns of
$\tau_{\square}$. Each element of $Y_r$ neighbours some element
of $X_r$. Hence each element of $Y_r\setminus\{d_{s_k}\}$
neighbours a unique element of $X_r\setminus \{c_{s_k}\}$. Now,
\eqref{secondcrucialinequality} implies that there is at least one
element of $X_r\setminus \{c_{s_k}\}$ whose neighbour does not
belong to $Y_r\setminus\{d_{s_k}\}$. This neighbour has
to be strictly smaller than $d_{s_k}$ clearly, as $d_{s_k}=b_r$ is the smallest entry in $Y_r$. The strictness follows from
$d_{s_k}>0$.

If, in $\tau_{\square}$, all elements of the set $X_r\setminus \{c_{s_k}\}$ lie
strictly north of the box occupied by $c_{s_k}$, then the argument earlier implies that for some $c_{j}\in X_r\setminus \{c_{s_k}\}$, we have $d_{j} < d_{s_k}$. But this corresponds to a triple rule violation because of the existence of the configuration shown below.
\begin{eqnarray*}
\ytableausetup{mathmode,boxsize=2em}
\begin{ytableau}
*(anotherblue)c_{j} & d_{j}\\ \none & \none[\vdots]\\\none & *(orange)d_{s_k}
\end{ytableau}
\hspace{3mm} \text{ satisfying }c_{j} \geq d_{s_k}>d_{j}\geq 0 
\end{eqnarray*} 
Hence, there is at least one element $c_j\in X_r\setminus \{c_{s_k}\}$ that lies strictly south of the box occupied by $c_{s_k}$. Any such element has to be greater than $c_{s_k}$. If not then it would satisfy the inequality $c_{s_k}>c_j \geq d_{s_k}$. But this would allows us to define $s_{k+1}$, which we know does not exist. Hence $c_j>c_{s_k}$.
But then Lemma \ref{precruciallemma} implies that $c_j>c_{s_1}$. As $c_{s_1}=a_{min}$, this contradicts the fact that $c_j\in X_r\setminus \{c_{s_k}\}$. Hence, we conclude that $d_{s_k}=0$. This establishes the first part of the claim.

As for the second part, note for any positive integer $j>s_k$ such that $c_{j}>0$, we must have $c_{j}>c_{s_k}$ (again by the fact that $s_{k+1}$ does not exist). Lemma \ref{precruciallemma} implies that $c_{j}>c_{s_1}$, while Lemma \ref{lemma1} implies that $d_{j}\geq b_{min+1}>0$. This establishes the second part of the claim.
\end{proof}

\begin{Remark}The lemma above implies that position of $c_{s_k}$ in
$\tau$ only depends on the shape of $\tau$, and not the filling
itself.
\end{Remark}

We would like to be able to construct the integers $s_{j}$ in
reverse, since locating $c_{s_k}$ is easier than locating
$c_{s_1}$ in $\tau_{\square}$. Thus, we would like an algorithm that takes as input the integer $s_k$ (and $\tau_{\square}$) and terminates by giving $s_1$ as output. Such an algorithm would give us a way of computing $f_{i+1}(T)=a_{min}=c_{s_1}$ given $\tau$. Before we achieve this aim, we need some notation.

Let $q_1$ be the greatest positive integer such that $d_{q_1}=0$ and $c_{q_1}>0$. Define a sequence of positive integers $q_{j}$ satisfying $1\leq q_j \leq n$ for $j\geq 2$ recursively according to the following conditions.
\begin{enumerate}
\item $q_j<q_{j-1}$,
\item $q_{j}$ is the greatest positive integer such that 
 \begin{empheq}[box=\mybluebox]{equation*} 
c_{q_{j}}>c_{q_{j-1}}\geq d_{q_{j}}.
 \end{empheq}
\end{enumerate}
By virtue of Lemma \ref{cruciallemma}, we have that $q_1=s_k$. In fact, $q_{j}=s_{k+1-j}$ for $1\leq j\leq k$ as the next lemma shows.
\begin{Lemma}\label{cruciallemma2}
For $1\leq j\leq k$, we have $q_j=s_{k+1-j}$.
\end{Lemma}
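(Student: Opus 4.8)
The plan is to prove the identity by induction on $j$, with Lemmas \ref{precruciallemma2} and \ref{lemma1} serving as the engine. The base case $j=1$ is exactly the assertion $q_1=s_k$, which is already recorded right before the statement as a consequence of Lemma \ref{cruciallemma}: its first part gives $d_{s_k}=0$ (with $c_{s_k}>0$), and its second part shows that every index below $s_k$ with a positive $c$-entry has a positive $d$-entry; hence $s_k$ is the largest index with $d=0$ and $c>0$, which is precisely the defining property of $q_1$.

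For the inductive step I would assume $q_{j-1}=s_{k+2-j}$ and set $m=k+1-j$, so that $q_{j-1}=s_{m+1}$ and $1\le m\le k-1$. By definition, $q_j$ is the greatest index $t<s_{m+1}$ satisfying $c_t>c_{s_{m+1}}\ge d_t$, so it suffices to identify this greatest index as $s_m$. First I would verify that $t=s_m$ itself qualifies: the recursive definition of $s_{m+1}$ says exactly that $c_{s_m}>c_{s_{m+1}}\ge d_{s_m}$, and $s_m<s_{m+1}$, so $s_m$ is a valid candidate. It then remains only to rule out every index strictly between $s_m$ and $s_{m+1}$.

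To do this I would fix $t$ with $s_m<t<s_{m+1}$; in particular $t\neq s_{m+1}$, so Lemma \ref{precruciallemma2} (applied with its index equal to $m$, legitimate since $1\le m\le k-1$) forces exactly one of $c_t>c_{s_1}$ or $c_t<c_{s_{m+1}}$. In the latter case the requirement $c_t>c_{s_{m+1}}$ fails outright, so $t$ is not a candidate. The former case is the crux: here Lemma \ref{lemma1} yields $d_t\ge b_{min+1}$, and since $b_{min+1}>a_{min}=c_{s_1}\ge c_{s_{m+1}}$ we obtain $d_t>c_{s_{m+1}}$, contradicting the candidate inequality $c_{s_{m+1}}\ge d_t$. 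Hence no index in the open range $(s_m,s_{m+1})$ is a candidate, the greatest candidate below $s_{m+1}$ is $s_m$, and so $q_j=s_m=s_{k+1-j}$, completing the induction.

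The main obstacle is precisely this first case: one must recognize that once $c_t$ rises above $c_{s_1}=a_{min}$, the paired entry $d_t$ is simultaneously forced up to at least $b_{min+1}$ through the correspondence of Lemma \ref{lemma1}, and it is this that destroys the inequality $c_{s_{m+1}}\ge d_t$. Everything else is routine bookkeeping reconciling the mutually reversed recursive definitions of the $s_j$ and the $q_j$.
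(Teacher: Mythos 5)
Your proof is correct, and it follows the same inductive skeleton as the paper: identical base case $q_1=s_k$ via Lemma \ref{cruciallemma}, then verifying that $s_{k+1-j}$ is a valid candidate for $q_j$ via the recursive definition of the $s$-sequence, and finally eliminating all larger candidates by forcing the contradiction $d_t\geq b_{min+1}>c_{s_{m+1}}$ out of Lemma \ref{lemma1}. The one genuine difference is in how the elimination step is organized. The paper argues by contradiction on the single index $q_{j+1}$: assuming $q_{j+1}>s_{k-j}$, it must first re-derive, inline, that $c_{q_{j+1}}>c_{s_{k-j}}$ (using the value-maximality in the definition of $s_{k+1-j}$), and only then can it invoke Lemma \ref{precruciallemma} followed by Lemma \ref{lemma1}. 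You instead quantify over every index $t$ strictly between $s_m$ and $s_{m+1}$ and appeal directly to the dichotomy of Lemma \ref{precruciallemma2}, which already packages that maximality argument together with Lemma \ref{precruciallemma}; each branch of the dichotomy then kills candidacy in one line. The two routes are logically equivalent (Lemma \ref{precruciallemma2} is itself proved from Lemma \ref{precruciallemma} by the same maximality argument the paper repeats inline), but your version is more modular and avoids duplicating work the paper has already done; the paper's version, by contrast, keeps the proof self-contained at the level of the weaker Lemma \ref{precruciallemma}.
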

\begin{proof}
Suppose that for some $1\leq j\leq k-1$, we know that $q_j=s_{k+1-j}$. We will show that $q_{j+1}=s_{k-j}$. 
By the definition of $q_{j+1}$, we have 
\begin{eqnarray}\label{qinequality1}
c_{q_{j+1}}>c_{q_j}\geq d_{q_{j+1}}.
\end{eqnarray}
Furthermore, since $q_j=s_{k+1-j}$, we have
\begin{eqnarray}\label{qinequality2}
c_{s_{k-j}}>c_{q_j}\geq d_{s_{k-j}}.
\end{eqnarray}
The recursive definition of $q_{j+1}$ combined with \eqref{qinequality1} and \eqref{qinequality2} implies $q_{j+1}\geq s_{k-j}$. Suppose, contrary to what we seek to prove, that $q_{j+1}>s_{k-j}$. Thus, we have that $q_j>q_{j+1}>s_{k-j}$. Diagrammatically, this corresponds to the following configuration in $\tau_{\square}$.
\begin{eqnarray*}
\ytableausetup{mathmode,boxsize=2.2em}
\begin{ytableau}
c_{s_{k-j}} & d_{s_{k-j}}\\ \none[\vdots] & \none[\vdots]\\c_{q_{j+1}} & d_{q_{j+1}}\\\none[\vdots]\\c_{q_j}
\end{ytableau}
\end{eqnarray*}
From \eqref{qinequality1} and \eqref{qinequality2}, we have $c_{q_{j+1}}>d_{s_{k-j}}$.
Since $q_j=s_{k+1-j}$, we must have $c_{q_{j+1}} > c_{s_{k-j}}$. To see this, notice that had $c_{q_{j+1}}<c_{s_{k-j}}$ been true, the recursive definition of $s_{k+1-j}$ given $s_{k-j}$ would have implied that $s_{k+1-j} \leq q_{j+1}$, which is clearly not the case. Thus, we get that $c_{q_{j+1}}>c_{s_{k-j}}$.

But then, Lemma \ref{precruciallemma} implies $c_{q_{j+1}} > c_{s_1}$. By Lemma \ref{lemma1}, this implies that
\begin{eqnarray}\label{tobecontradicted}
d_{q_{j+1}} \geq b_{min+1}.
\end{eqnarray} 

Since $c_{s_1}\geq c_{s_{k+1-j}}=c_{q_j}$, we
know that $c_{q_j}< b_{min+1}$, as $c_{s_1}=a_{min}<b_{min+1}$. But then the inequality in \eqref{tobecontradicted} is in contradiction to the inequality in \eqref{qinequality1}. Hence $q_{j+1}=s_{k-j}$. Since $q_1=s_k$, the claim follows by induction.
\end{proof}
Thus, now we know that $q_k=s_1$. The next lemma will show that $q_{k+1}$ does not exist.
\begin{Lemma}
There does not exist a positive integer $t$ such that $t<s_1$ and
\begin{eqnarray*}
c_{t} > c_{s_1} \geq d_{t}.
\end{eqnarray*}
\end{Lemma}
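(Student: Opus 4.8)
The plan is to argue by contradiction and reduce everything to a single application of Lemma \ref{lemma1} together with the defining property of $min$. Suppose, towards a contradiction, that a positive integer $t$ with $t<s_1$ exists satisfying $c_{t}>c_{s_1}$ and $c_{s_1}\geq d_{t}$.

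First I would feed the inequality $c_{t}>c_{s_1}$ into Lemma \ref{lemma1}. Its forward implication then yields $d_{t}\geq b_{min+1}$. Next I would recall two facts already recorded in the setup: that $c_{s_1}=a_{min}$, and that $min$ is chosen to be the smallest positive integer with $a_{min}<b_{min+1}$, so in particular $a_{min}<b_{min+1}$. Chaining these gives $c_{s_1}=a_{min}<b_{min+1}\leq d_{t}$, hence $d_{t}>c_{s_1}$. This is incompatible with the standing assumption $c_{s_1}\geq d_{t}$, and the contradiction completes the proof.

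There is essentially no obstacle to overcome here: the entire content is the observation that the two conditions $c_{t}>c_{s_1}$ and $c_{s_1}\geq d_{t}$ cannot hold simultaneously, because Lemma \ref{lemma1} forces any entry in the $i$-th column strictly exceeding $c_{s_1}$ to sit immediately to the left of an entry that is at least $b_{min+1}$, and $b_{min+1}$ is itself strictly larger than $c_{s_1}=a_{min}$. I would note, in passing, that the hypothesis $t<s_1$ plays no role in the argument, since Lemma \ref{lemma1} is an equivalence valid for every index $t$; the only point requiring care is keeping track of the identification $c_{s_1}=a_{min}$ and the defining inequality for $min$.

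Finally I would record the purpose of the statement: combined with Lemma \ref{cruciallemma2}, which identifies $q_{k}=s_1$, this lemma shows that no index $q_{k+1}$ can be produced by the reverse recursion defining the $q_j$. Thus that recursion terminates precisely at $q_{k}=s_1$, which is exactly what is needed to guarantee that the backward procedure recovers $c_{s_1}=f_{i+1}(T)$ from $c_{s_k}$ and hence validates Algorithm \ref{results algorithm1}.
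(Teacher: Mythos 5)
Your proof is correct and is essentially identical to the paper's: both derive $d_{t}\geq b_{min+1}$ from Lemma \ref{lemma1} and contradict $c_{s_1}\geq d_{t}$ via $b_{min+1}>a_{min}=c_{s_1}$. Your side observation that the hypothesis $t<s_1$ is never used is also accurate, since Lemma \ref{lemma1} holds for every index.
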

\begin{proof}
Suppose such a $t$ exists. By Lemma \ref{lemma1}, we have $d_{t}\geq b_{min+1}$. But we also know that $b_{min+1}>a_{min}=c_{s_1}$. Thus, the inequality in the statement of the lemma can not be satisfied.
\end{proof}

We now know how to compute $c_{s_1}$ given $\tau_{\square}$. Our next aim is to remove $c_{s_1}$ from the $i$-th column of $\tau_{\square}$ and then rearrange entries therein so as to obtain $(\rho^{-1}(\tjdt_{i+1}(T)))_{\square}$. The short algorithm that accomplishes this is presented below.

\begin{Algorithm}\label{singlecolumnadjustment}
\begin{enumerate}
\item For $j=k,k-1,\ldots 2$ in that order, place $c_{q_{j-1}}$ in the box that originally contained $c_{q_j}$.
\item Place a $0$ in the box that originally contained $c_{q_1}$. Call the resulting tableau $\beta(\tau_{\square})$.
\item If there is a row in $\beta(\tau_{\square})$ comprising entirely of $0$s, we remove that row. The resulting rectangular filling is $(\rho^{-1}(\tjdt_{i+1}(T)))_{\square}$. If we omit $0$s altogether, we obtain what we will call $\phi_{i+1}(\tau)$.
\end{enumerate}
\end{Algorithm}
We will demonstrate the above algorithm with an example. The proof of validity of the algorithm will be presented subsequently.
\begin{Example}
Suppose that
\begin{eqnarray*}
T_{\square}=\ytableausetup{mathmode,boxsize=1.25em}
\begin{ytableau}
7&3&0&0&0\\8&5&0&0&0\\9&6&2&1&0
\end{ytableau}.
\end{eqnarray*}
Say we want to compute $\tjdt_3(T)$. It is clear that if we start a backward jeu de taquin slide from the addable node in the third column, then $6$ is the first entry that moves horizontally. Thus $\tjdt_3(T)$ would be obtained by removing $6$ from the second column. Then, we have 
\begin{eqnarray*}
(\emph{tjdt}_3(T))_{\square}=\ytableausetup{mathmode,boxsize=1.25em}
\begin{ytableau}
7&0&0&0&0\\8&3&0&0&0\\9&5&2&1&0
\end{ytableau}.
\end{eqnarray*}
Now we try and repeat the above with the $\SSRCT$ $\tau=\rho^{-1}(T)$ using Algorithm \ref{singlecolumnadjustment}. We have
\begin{eqnarray*}
\tau_{\square}=\ytableausetup{mathmode,boxsize=1.25em}
\begin{ytableau}
7&6&2&1&0\\8&5&0&0&0\\9&3&0&0&0
\end{ytableau}.
\end{eqnarray*}
Then we have $q_1=3$ as $c_{3}$ is the lowermost non-zero entry in the second column that has a neighbour equalling $0$. Since $c_{2}>c_{3}\geq d_{2}$ we have $q_2=2$. Similarly, we have $q_3=1$. Thus, 
\begin{eqnarray*}
\beta(\tau_{\square})=\ytableausetup{mathmode,boxsize=1.25em}
\begin{ytableau}
7&5&2&1&0\\8&3&0&0&0\\9&0&0&0&0
\end{ytableau}.
\end{eqnarray*}
As there is no row comprising entirely of $0$s, by the third step in the algorithm, we have that $(\rho^{-1}(\tjdt_{3}(T)))_{\square}$ is the tableau above. We can remove the $0$s to get a better picture.
Then Algorithm \ref{singlecolumnadjustment} implies that 
\begin{eqnarray*}
\phi_{3}(\tau)=\ytableausetup{mathmode,boxsize=1.25em}
\begin{ytableau}
7&5&2&1\\8&3\\9
\end{ytableau}.
\end{eqnarray*}
We also have
\begin{eqnarray*}
\emph{tjdt}_{3}(T)=\ytableausetup{mathmode,boxsize=1.25em}
\begin{ytableau}
7\\8&3\\9&5&2&1
\end{ytableau}.
\end{eqnarray*}
It is easily checked that $\phi_{3}(\tau) = \rho^{-1}(\tjdt_{3}(T))$.
\end{Example}

Now we will proceed to prove the validity of Algorithm \ref{singlecolumnadjustment} by showing that $\phi_{i+1}(\tau)$ is an $\SSRCT$. Since $\phi_{i+1}(\tau)$ is obtained from $\beta(\tau_{\square})$ by removing extraneous $0$s, we will work with $\beta(\tau_{\square})$ as it is more convenient. We need to show that $\beta(\tau_{\square})$ satisfies the conditions below.
\begin{itemize}
\item Excepting $0$s, the first column of $\beta(\tau_{\square})$ is strictly decreasing from bottom to top.
\item It has no triple rule violations.
\item The rows are weakly decreasing from left to right.
\end{itemize}
The third condition above is verified by an easy argument: Note that if the position of an entry in the $i$-th column remains unchanged, then the condition is automatically verified. Otherwise the entry $c_{s_j}$ is replaced by $c_{s_{j+1}}$ for some $1\leq j\leq k-1$. Since $c_{s_{j+1}}\geq d_{s_j}$ and $c_{s_j}>c_{s_{j+1}}$, we see that the algorithm does not change the fact that the rows were weakly decreasing from left to right. Finally, notice that $c_{s_k}$ is replaced by a $0$, but since $d_{s_k}$ was already $0$ by Lemma \ref{cruciallemma}, we still have rows weakly decreasing from left to right.

Now we will verify that the first condition holds as well. Notice that if $i\geq 2$, then no changes are ever made to the first column by Algorithm \ref{singlecolumnadjustment}. Hence, in this scenario $\phi_{i+1}(\tau)$ has the entries in the first column strictly decreasing from bottom to top. Consider now the case $i=1$. Then, it is easily seen that Algorithm \ref{singlecolumnadjustment} replaces the bottom most non-zero entry (in the first column) whose neighbour is a $0$ with a $0$, and no other change is made. Hence, if one omits the $0$s, the first column of $\beta(\tau_{\square})$ is strictly decreasing from bottom to top.

Thus, we only need to prove that there are no triple rule violations in $\beta(\tau_{\square})$. We will do this in cases.
\begin{Lemma}\label{noinvtripthirdcol}
In $\beta(\tau_{\square})$, consider any configuration of the type shown below
\begin{eqnarray*}
\ytableausetup{textmode}
\begin{ytableau}
x & y\\ \none & \none[\vdots]\\\none & z
\end{ytableau},
\end{eqnarray*}
where $y$ and $z$ belong to the $i+1$-th column. If $z>0$ and $z>y$, then $z>x$.
\end{Lemma}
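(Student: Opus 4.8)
The plan is to reduce the statement, which concerns the reshuffled filling $\beta(\tau_{\square})$, back to the validity of the original $\SSRCT$ $\tau$, exploiting the fact that Algorithm \ref{singlecolumnadjustment} touches only the $i$-th column and only ever \emph{decreases} the entries there. Fix the rows so that $x$ and $y$ lie in a common row $p$ while $z$ lies in a lower row $t$, with $p<t$; thus $x$ occupies position $(p,i)$ in $\beta(\tau_{\square})$, while $y=d_p$ and $z=d_t$ lie in column $i+1$. The goal $z>x$ will follow from two monotonicity facts and one invocation of the triple rule.

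First I would record the two structural facts. The $(i+1)$-th column of $\tau_{\square}$ is never altered by Algorithm \ref{singlecolumnadjustment}, so $y=d_p$ and $z=d_t$ agree with the corresponding entries of $\tau_{\square}$. Next I would show that every entry of the $i$-th column can only weakly decrease. Indeed, by Lemma \ref{cruciallemma2} we have $q_j=s_{k+1-j}$, and by the definition of the $s_j$ we have $c_{s_1}>c_{s_2}>\cdots>c_{s_k}$; hence each step of the algorithm replaces the value $c_{q_j}=c_{s_{k+1-j}}$ in box $q_j$ by the strictly smaller value $c_{q_{j-1}}=c_{s_{k+2-j}}$, while the final box $q_1=s_k$ is replaced by $0$. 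Writing $c_p$ for the original entry of $\tau_{\square}$ at position $(p,i)$, this gives $x\le c_p$, and since no previously empty box of column $i$ is ever filled, $c_p=0$ forces $x=0<z$, leaving nothing to prove in that case.

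The heart of the argument is then a single use of the triple rule in $\tau$ itself, which is legitimate because $\tau$ is an $\SSRCT$ and so satisfies condition (3) of the definition. Assuming $c_p>0$, I would show $c_p<z$ by ruling out $c_p\ge z=d_t$. Applying condition (3) to the rows $p<t$ and the columns $i,i+1$: if $c_p\ge d_t$, then $(p,i+1)$ must lie in the shape and satisfy $d_p>d_t$. When $y=d_p>0$ the hypothesis $z>y$ gives $d_p<d_t$, contradicting $d_p>d_t$; when $y=d_p=0$ the box $(p,i+1)$ is not in the shape, again contradicting the conclusion. (The boundary row $p=s_k$, where $d_{s_k}=0$ by Lemma \ref{cruciallemma}, falls into this second case.) Either way $c_p<z$, and combined with $x\le c_p$ this yields $x\le c_p<z$, which is exactly $z>x$.

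The main obstacle, and the only spot demanding genuine care, is the monotonicity claim $x\le c_p$: one must be certain that the reshuffling performed by Algorithm \ref{singlecolumnadjustment} never raises a value in column $i$ and never introduces a nonzero entry into a box that was empty. This is precisely where the ordering $c_{s_1}>\cdots>c_{s_k}$ and the identification $q_j=s_{k+1-j}$ from Lemma \ref{cruciallemma2} do the work. Once that is established, the case split on whether $y$ is positive or zero is entirely routine, since each alternative simply falsifies one of the two clauses in the conclusion of the triple rule.
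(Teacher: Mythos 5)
Your proposal is correct and follows essentially the same route as the paper: both reduce the claim to the absence of triple rule violations in $\tau_{\square}$ by observing that Algorithm \ref{singlecolumnadjustment} leaves the $(i+1)$-th column untouched and can only decrease entries in the $i$-th column, so that $z>x'\geq x$ where $x'$ is the original occupant of the box of $x$. The only difference is that you spell out the monotonicity of column $i$ (via $q_j=s_{k+1-j}$ and the chain $c_{s_1}>\cdots>c_{s_k}$) and the triple-rule case split explicitly, where the paper simply asserts these facts.
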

\begin{proof}
Notice that Algorithm \ref{singlecolumnadjustment} does not alter the position of any entry in the $i+1$-th column. Let $x'$ be the entry in $\tau_{\square}$ in the box currently occupied by $x$ in $\beta(\tau_{\square})$. Since there are no triple rule violations in $\tau_{\square}$, we know that
\begin{eqnarray}\label{nononinv}
 z>0 \text{ and } z>y \Longrightarrow z>x'
\end{eqnarray}
We have two possibilities to deal with. Firstly, if $x=x'$, then the claim follows easily from \eqref{nononinv}. Now, assume $x\neq x'$. Algorithm \ref{singlecolumnadjustment} implies that $x<x'$. Again, \eqref{nononinv} implies the claim.
\end{proof}

\begin{Lemma}\label{noinvtripsecondcol}
In $\beta(\tau_{\square})$, consider any configuration of the type shown below
\begin{eqnarray*}
\ytableausetup{textmode}
\begin{ytableau}
x & y\\ \none & \none[\vdots]\\\none & z
\end{ytableau},
\end{eqnarray*}
where $y$ and $z$ belong to the $i$-th column. If $z>0$ and $z>y$, then $z>x$.
\end{Lemma}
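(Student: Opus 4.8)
The key structural fact is that Algorithm \ref{singlecolumnadjustment} alters only the $i$-th column of $\tau_{\square}$: it overwrites $c_{s_m}$ by $c_{s_{m+1}}$ for $1\le m\le k-1$, replaces $c_{s_k}$ by $0$, and leaves everything else (in particular the entire $(i-1)$-th column) unchanged. We may assume $i\ge 2$, as otherwise there is no column $i-1$ and the configuration does not arise. Write the displayed configuration with $x,y$ in row $p$ and $z$ in row $p'$, so that $p<p'$ and $x=\tau_{\square}(p,i-1)$. As in Lemma \ref{noinvtripthirdcol}, the plan is to reduce the claim to the absence of triple rule violations in $\tau_{\square}$ itself: it suffices to produce a row $t>p$ with $\tau_{\square}(t,i)=z$ and to prove $z>\tau_{\square}(p,i)$, for then applying the triple rule to the four entries of $\tau_{\square}$ on rows $p<t$ and columns $i-1,i$ yields $z>\tau_{\square}(p,i-1)=x$.

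Producing $t$ is immediate. Every positive value in the $i$-th column of $\beta(\tau_{\square})$ is one of the original entries, and the algorithm only moves values upward (it carries $c_{s_{m+1}}$ from row $s_{m+1}$ into the higher row $s_m$). Hence the value $z$, occupying row $p'$ in $\beta(\tau_{\square})$, equals $\tau_{\square}(t,i)$ for some row $t\ge p'>p$.

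It remains to prove $z>\tau_{\square}(p,i)$, and here the two lemmas diverge sharply: in Lemma \ref{noinvtripthirdcol} the modified entries lay in the conclusion and could only decrease, making the argument one line, whereas here the modified entries $y,z$ sit in the hypothesis, so one must recover information about the \emph{original} entry $\tau_{\square}(p,i)$ from the inequality $z>y$. If row $p$ was not modified, i.e.\ $p\notin\{s_1,\dots,s_k\}$, then $\tau_{\square}(p,i)=y<z$ and we are done. The crux is the case $p=s_{m'}$, where $\tau_{\square}(p,i)=c_{s_{m'}}$ while $y$ is the smaller value $c_{s_{m'+1}}$ (interpreted as $0$ when $m'=k$). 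When $m'=k$, the second part of Lemma \ref{cruciallemma}, applied to $t>s_k$ with $c_t=z>0$, gives $z>c_{s_1}\ge c_{s_k}=\tau_{\square}(p,i)$ at once. When $m'<k$, suppose for contradiction $z\le c_{s_{m'}}$; since the positive entries of the column are distinct this forces $c_{s_{m'}}>c_t>c_{s_{m'+1}}$ with $t>s_{m'}$, using $z>y=c_{s_{m'+1}}$. But $s_{m'+1}$ was chosen so that $c_{s_{m'+1}}$ is the \emph{greatest} value among rows $t'>s_{m'}$ satisfying $c_{s_{m'}}>c_{t'}\ge d_{s_{m'}}$, and in particular $c_{s_{m'+1}}\ge d_{s_{m'}}$. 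Hence either $c_t\ge d_{s_{m'}}$, making $c_t$ an eligible value strictly above the maximum $c_{s_{m'+1}}$, or $c_t<d_{s_{m'}}\le c_{s_{m'+1}}$; both are absurd. Therefore $z>c_{s_{m'}}=\tau_{\square}(p,i)$, completing the reduction.

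The main obstacle is precisely this last step. Unlike Lemma \ref{noinvtripthirdcol}, one cannot simply invoke monotonicity of the modification; one must instead convert the order relation $z>y$ in the \emph{rearranged} column back into an order relation against the original entry $\tau_{\square}(p,i)$, and it is exactly here that the extremal/recursive definition of the set $S=\{s_1,\dots,s_k\}$ and the second part of Lemma \ref{cruciallemma} do the real work.
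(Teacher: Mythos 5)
Your proof is correct, and at the top level it follows the same strategy as the paper's: exploit the fact that Algorithm \ref{singlecolumnadjustment} leaves column $i-1$ untouched, and reduce the claim to the absence of triple rule violations in $\tau_{\square}$, with Lemma \ref{cruciallemma} handling the bottommost case. The difference lies in the decomposition. The paper fixes the three \emph{positions} and compares the new entries $x,y,z$ with the original entries $x',y',z'$ in those same boxes, splitting into cases according to which of $y,z$ changed; its delicate case ($z=z'$, $y'=c_{s_j}$, $y=c_{s_{j+1}}$) is settled by citing the dichotomy of Lemma \ref{precruciallemma2}. You instead fix the \emph{value} $z$ and trace it back to its original row $t\geq p'>p$ (legitimate, since the algorithm only moves entries of column $i$ upward), so no case analysis on $z$'s box is needed at all; the only split is on whether $y$'s row lies in $S$, and in the crucial subcase $p=s_{m'}$ with $m'<k$ you re-derive, directly from the maximality in the recursive definition of $s_{m'+1}$, exactly the contradiction that the paper outsources to Lemma \ref{precruciallemma2} --- your inline argument is in substance the proof of that lemma. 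As for what each route buys: your value-tracking is uniform in $z$ and, as a byproduct, explicitly covers the configuration in which $y$'s box is unmodified while $z$'s box is some $s_j$ (so that $z=c_{s_{j+1}}$ now sits in row $s_j$); the paper's three stated cases ($y,z$ both unchanged; $z$ unchanged and $y$ changed; both changed) pass over this configuration in silence, though it is harmless and is dispatched precisely by your ``row $p$ not modified'' observation applied at $z$'s original row. The paper's route, in exchange, is shorter on the page, since quoting Lemma \ref{precruciallemma2} avoids repeating the extremal argument.
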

\begin{proof}
Firstly, notice that we only need to worry about this configuration when $i\geq 2$.

Let $x'$, $y'$ and $z'$ be the entries in $\tau_{\square}$ in the boxes corresponding to those occupied by $x$, $y$ and $z$ in $\beta(\tau_{\square})$ respectively. Since Algorithm \ref{singlecolumnadjustment} does not change the $i-1$-th column, we have $x=x'$.
This along with the fact that there are no triple rule violations in $\tau_{\square}$ implies 
\begin{eqnarray*}\label{nononinvsecond}
 z'>0 \text{ and } z'>y' \Longrightarrow z'>x.
\end{eqnarray*}
If $y=y'$ and $z=z'$, then we have nothing to prove.

Now assume that $z=z'$ but $y\neq y'$. Then we have two cases. In the first case, we have $y'=c_{s_k}$ and thus, $y=0$. Furthermore, since $z$ occupies a box below $c_{s_k}$, we know that $z>c_{s_1}$, by Lemma \ref{cruciallemma}. Now the claim follows readily for this case. In the second case, we have $y'=c_{s_j}$ and thus, $y=c_{s_{j+1}}$ for some $1\leq j\leq k-1$. The possible relative positions of $y$, $y'$ and $z$ in $\tau_{\square}$ are shown below.
\begin{eqnarray*}
\ytableausetup{mathmode}
\begin{ytableau}
y' \\  \none[\vdots]\\y\\ \none[\vdots]\\ z
\end{ytableau}
\hspace{4mm}\hspace{7mm}
\ytableausetup{mathmode}
\begin{ytableau}
y' \\  \none[\vdots]\\z\\ \none[\vdots]\\ y
\end{ytableau}
\end{eqnarray*}
According to Lemma \ref{precruciallemma2}, we have either $z>c_{s_1}$ (and hence $z>y,y'$) or $z<y$ (and hence $z<y'$). In any case, we see that if $z>0$ and $z>y$, then $z>x$.

Now assume that $y\neq y'$ and $z\neq z'$. Then we must have $y=c_{s_{i_1}}$ and $z=c_{s_{i_2}}$ where $2\leq i_1<i_2\leq k$. But then the situation where $z>y$ does not arise.
\end{proof}
Thus, we have established that $\phi_{i+1}(\tau)$ is an $\SSRCT$ indeed. We will prove another lemma that is of similar flavour to the previous two lemmas, and which we will soon use.
\begin{Lemma}\label{lemmaforlastrow}
In $\beta(\tau_{\square})$, consider a configuration of the type 
\begin{eqnarray*}
\ytableausetup{mathmode}
\begin{ytableau}
x & y
\end{ytableau}
\hspace{5mm}\text{ where $x$ is in $i$-th column and $y$ in $i+1$-th column.}
\end{eqnarray*}
If $c_{s_1}>y\geq 0$ then $c_{s_1}>x$.
\end{Lemma}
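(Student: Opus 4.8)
The plan is to argue by cases according to whether the box in the $i$-th column holding $x$ is one of the boxes altered by Algorithm~\ref{singlecolumnadjustment}. Throughout I will use that Algorithm~\ref{singlecolumnadjustment} leaves the $(i+1)$-th column untouched, so if the box holding $x$ sits in row $t$, then its right neighbour is $y=d_t$, and the hypothesis reads $c_{s_1}>d_t\ge 0$. Recall also that the algorithm replaces $c_{s_j}$ by $c_{s_{j+1}}$ for $1\le j\le k-1$ and replaces $c_{s_k}$ by $0$, while every other $i$-th column entry is left unchanged.

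First I would dispose of the case where the box holding $x$ \emph{is} altered by the algorithm. Then $x\in\{c_{s_2},\ldots,c_{s_k}\}\cup\{0\}$. Since the defining inequalities of the $s_j$ force $c_{s_1}>c_{s_2}>\cdots>c_{s_k}$, and since $c_{s_1}=a_{min}>0$, each of these possible values of $x$ is strictly less than $c_{s_1}$. Hence $c_{s_1}>x$ holds outright in this case, without even invoking the hypothesis on $y$.

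The substantive case is when the box holding $x$ is unchanged, so $x=c_t$ with $t\notin S$; in particular $t\ne s_1$. Here I would feed the hypothesis into Lemma~\ref{lemma1}. Since $c_{s_1}=a_{min}<b_{min+1}$, the hypothesis gives $d_t=y<c_{s_1}<b_{min+1}$, so $d_t\not\ge b_{min+1}$. The contrapositive of Lemma~\ref{lemma1} then yields $c_t\le c_{s_1}$. To upgrade this to a strict inequality I would use the observation recorded earlier that positive entries of the $i$-th column are pairwise distinct (two $i$-th column entries coincide only when both are $0$): if $c_t>0$, then $c_t=c_{s_1}$ would force $t=s_1$, contradicting $t\notin S$, while if $c_t=0$ then $c_t<a_{min}=c_{s_1}$ is immediate. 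Either way $c_t<c_{s_1}$, i.e.\ $c_{s_1}>x$.

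Since these cases are exhaustive, the lemma follows. I do not anticipate a genuine obstacle: the only case requiring the hypothesis is the unaltered one, and there the entire weight is carried by Lemma~\ref{lemma1} together with the distinctness of positive column entries; the altered case is immediate from the monotonicity of the chain $c_{s_1}>\cdots>c_{s_k}$. The one point to be careful about is the strictness upgrade from $c_t\le c_{s_1}$ to $c_t<c_{s_1}$, which is exactly where $t\ne s_1$ (guaranteed by $t\notin S$) is needed.
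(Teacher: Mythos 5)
Your proof is correct and takes essentially the same route as the paper: the substantive case rests on the contrapositive of Lemma \ref{lemma1} (using $c_{s_1}=a_{min}<b_{min+1}$), with strictness obtained from the fact that $c_{s_1}$ cannot reappear as $x$ (the paper phrases this as $c_{s_1}$ having been removed from column $i$, you phrase it via distinctness of positive column entries). Your explicit case split between altered and unaltered boxes merely spells out a point the paper's terser proof leaves implicit, namely that Lemma \ref{lemma1} literally applies only to unaltered pairs $(c_t,d_t)$, while altered boxes hold entries from $\{c_{s_2},\ldots,c_{s_k},0\}$, which are trivially below $c_{s_1}$.
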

\begin{proof}
For any configuration of the type in the statement of the claim, we first show that $c_{s_1}$ can neither equal $x$ nor $y$.

Recall that $c_{s_1}=a_{min}$ is the first entry that moves horizontally when computing $\jdt_{i+1}(T)$. This entry is removed from the $i$-th column of $\tau_{\square}$ when $\beta(\tau_{\square})$ is computed. Hence it can not equal $x$. Lemma \ref{lemma0} implies it can not equal $y$ either.

If $c_{s_1}>y\geq 0$, then we know that $y\leq b_{min}$. But then Lemma \ref{lemma1} yields that $x<c_{s_1}$, which is what we wanted.
\end{proof}
We are now in a position to state our first important theorem.
\begin{Theorem}\label{commutativediagramforPhi}
The following diagram commutes.
\begin{eqnarray*}
\xymatrix{ \text{$\SSRT$\emph{s}} \ar[d]_{\tjdt_{i+1}} \ar[r]^{\rho^{-1}} & \text{$\SSRCT$\emph{s}}\ar[d]^{\phi_{i+1}}\\ \text{$\SSRT$\emph{s}} \ar[r]_{\rho^{-1}} & \text{$\SSRCT$\emph{s}}}
\end{eqnarray*}
\end{Theorem}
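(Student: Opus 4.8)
The plan is to reduce the asserted identity $\phi_{i+1}(\rho^{-1}(T))=\rho^{-1}(\tjdt_{i+1}(T))$, that is $\phi_{i+1}(\tau)=\rho^{-1}(\tjdt_{i+1}(T))$, to a statement entirely about $\SSRT$s by composing with $\rho$. The preceding lemmas already establish that $\phi_{i+1}(\tau)$ is a genuine $\SSRCT$ of straight shape, and $\rho=\rho_{\varnothing}$ is a bijection between $\SSRCT$s and $\SSRT$s of straight shape. Hence it is enough to prove the equivalent identity $\rho(\phi_{i+1}(\tau))=\tjdt_{i+1}(T)$; applying $\rho^{-1}$ to both sides then gives the theorem.

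Both sides of this reduced identity are $\SSRT$s, and an $\SSRT$ is determined completely by its partition shape together with the multiset of entries occurring in each of its columns: since columns are strictly decreasing from bottom to top, the entries of a given column are distinct and their placement is forced once the shape is fixed. So I would prove the reduced identity by matching column multisets and shapes separately.

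For the column multisets, recall that $\rho$ only rearranges the entries within each column, so $\rho(\phi_{i+1}(\tau))$ and $\phi_{i+1}(\tau)$ carry identical column multisets. By Algorithm \ref{singlecolumnadjustment}, $\phi_{i+1}(\tau)$ agrees with $\tau$ outside column $i$, while inside column $i$ the values $c_{s_2},\ldots,c_{s_k}$ are merely relocated and $c_{s_1}=a_{min}=f_{i+1}(T)$ is deleted; thus column $i$ of $\phi_{i+1}(\tau)$ has the multiset of column $i$ of $\tau$ with one copy of $a_{min}$ removed, and every other column is unchanged. Since $\tau=\rho^{-1}(T)$ shares its column multisets with $T$, this is exactly the effect of $\tjdt_{i+1}$ on $T$, which deletes $f_{i+1}(T)=a_{min}$ from column $i$ and leaves the rest of $T$ untouched. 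Hence the column multisets of $\rho(\phi_{i+1}(\tau))$ and $\tjdt_{i+1}(T)$ coincide.

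For the shapes, $\tjdt_{i+1}(T)$ has shape $\lambda$ with the removable node in column $i$ deleted. On the other side, Lemma \ref{cruciallemma} gives $d_{s_k}=0$, so the deleted cell $(s_k,i)$ is the rightmost box of a row of $\alpha$ of length exactly $i$; removing it lowers by one the number of rows of $\alpha$ of length $\geq i$, whence $\widetilde{\sha(\phi_{i+1}(\tau))}$ equals $\widetilde{\alpha}=\lambda$ with one box removed from column $i$, matching $\sha(\tjdt_{i+1}(T))$. With both shape and column multisets matched, the reduced identity follows, and therefore so does the theorem. It is worth stressing that one must pass through $\rho$ here rather than compare the underlying compositions directly: distinct compositions can share an underlying partition, so matching $\widetilde{\sha}$ alone would be insufficient --- it is the bijectivity of $\rho$ that promotes the $\SSRT$-level equality to the desired equality of $\SSRCT$s. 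The substantive work has already been absorbed into the earlier lemmas locating $c_{s_1}$ and $c_{s_k}$ and guaranteeing that $\phi_{i+1}(\tau)$ is an $\SSRCT$; granting these, the only point demanding care is precisely this shape bookkeeping.
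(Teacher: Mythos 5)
Your proposal is correct and takes essentially the same approach as the paper: the paper's proof likewise rests on the facts that $\phi_{i+1}(\tau)$ is an $\SSRCT$ and that the same entry was removed from column $i$ in both $T$ and $\tau$ with no other entries changing columns, which is exactly your column-multiset matching combined with the bijectivity of $\rho$. Your separate shape-matching step is harmless but redundant, since for straight shapes the column multisets already determine the shape (and everything else) of the bottom-justified image under $\rho$.
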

\begin{proof}
We have already established that $\phi_{i+1}(\tau)$ is an $\SSRCT$. This observation and the fact that we have removed the same entry from the $i$-th column in both $T$ and $\tau$ while ensuring that none of the other entries have changed columns, establishes the claim.
\end{proof}

We define another $\SSRCT$ using the procedure $\phi_i$ as follows.
\begin{eqnarray*}
\Phi_{i}(\tau) &=& \left\lbrace\begin{array}{ll} \phi_{2}\circ \phi_{3}\circ \cdots \circ \phi_{i}(\tau) & i>1\\ \tau & i=1\end{array}\right.
\end{eqnarray*}
Let $\gamma = \sha(\Phi_{i}(\tau))=(\gamma_1,\ldots, \gamma_s)$. Let $\gamma'=(\gamma,i)$ be the composition obtained by appending a part of length $i$ to $\gamma$. Consider the filling of shape $\gamma' \cskew (1)$ where the first $s$ rows are filled as $\Phi_{i}(\tau)$. Fill the last row with the entries of $H_{i}(T)$ in decreasing order from left to right. Call the resulting filling $\mu_{i}(\tau)$. 

\begin{Theorem}\label{backwardjdtslideonssrct}
With the notation as above, the following diagram commutes.
\begin{eqnarray*}
\xymatrix{ \text{$\SSRT$\emph{s}} \ar[d]_{\jdt_{i}} \ar[r]^{\rho^{-1}} & \text{$\SSRCT$\emph{s}}\ar[d]^{\mu_{i}}\\ \text{$\SSRT$\emph{s}} \ar[r]_{\rho_{(1)}^{-1}} & \text{$\SSRCT$\emph{s}}}
\end{eqnarray*}
\end{Theorem}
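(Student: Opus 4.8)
The plan is to show that $\mu_i$ on $\SSRCT$s is the image under $\rho^{-1}$ (with the target using $\rho_{(1)}^{-1}$, reflecting the inner shape $(1)$ that $\jdt_i$ produces) of the backward slide $\jdt_i$ on $\SSRT$s. First I would decompose $\jdt_i(T)$ into the part that I already understand and the part I must assemble by hand. The key structural fact is the multiset identity
\begin{eqnarray*}
H_{i}(\tjdt_{i+1}(T))\cup \{f_{i+1}(T)\} = H_{i+1}(T),
\end{eqnarray*}
proved earlier, which lets me peel off one horizontally-moving entry at a time. Applying this repeatedly, I obtain that the full set of $i$ entries moving horizontally under $\jdt_i$ is exactly the set $H_i(T)$, and that the $\SSRT$ $\tjdt_2\circ\cdots\circ\tjdt_i(T)$ records the shape left behind once all these entries have been stripped from columns $1$ through $i-1$. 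On the $\SSRCT$ side, the operator $\Phi_i = \phi_2\circ\cdots\circ\phi_i$ is defined to mirror precisely this truncated sequence, so the first step is to invoke Theorem \ref{commutativediagramforPhi} iteratively.

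**The inductive core.**
The main step is to establish, by induction on $i$, that $\rho^{-1}(\tjdt_2\circ\cdots\circ\tjdt_i(T)) = \Phi_i(\tau)$ and, simultaneously, that the multiset of entries that have exited during the computation of $\Phi_i(\tau)$ equals $H_i(T)$. The base case $i=1$ is trivial since $\tjdt_1 = \mathrm{id}$ and $\Phi_1 = \mathrm{id}$. For the inductive step, I would write $\tjdt_2\circ\cdots\circ\tjdt_i(T) = \tjdt_2\circ\cdots\circ\tjdt_{i-1}(\tjdt_i(T))$ and apply Theorem \ref{commutativediagramforPhi} to the outermost slide $\tjdt_i$, which gives $\rho^{-1}(\tjdt_i(T)) = \phi_i(\tau)$, together with the fact that the entry removed is $f_i(T)$. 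The inductive hypothesis then handles the remaining composite $\tjdt_2\circ\cdots\circ\tjdt_{i-1}$ applied to $\tjdt_i(T)$. Combining the two exit-multisets via the $H$-identity above yields $H_i(T)$ on the nose.

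**Assembling the last row.**
Having matched the straight-shape part, it remains to verify that appending $H_i(T)$ as the bottom row in the shape $\gamma'\cskew(1)$ correctly produces $\rho_{(1)}^{-1}(\jdt_i(T))$. Here I would argue that $\jdt_i(T)$ differs from $\tjdt_2\circ\cdots\circ\tjdt_i(T)$ exactly by reinserting the horizontally-moving entries of $H_i(T)$ into a new bottom row (indexed by the inner node $(1)$), arranged in the column positions dictated by the slide. On the $\SSRCT$ side, Algorithm \ref{results algorithm2} places these same entries in decreasing order from left to right in the last row of $\delta\cskew(1)$. The content to check is that $\rho_{(1)}^{-1}$ sends the decreasing last row to the correct skew $\SSRT$ row, and that no triple rule violation is introduced; Lemma \ref{lemmaforlastrow} is exactly the tool for the latter, since it controls the relationship between a new bottom-row entry $x$ in column $i$ and its neighbour $y$ in column $i+1$ relative to $c_{s_1}=f_i(T)$. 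The main obstacle I anticipate is precisely this last-row compatibility: one must confirm that the entries of $H_i(T)$, which are generated in the order $f_i, f_{i-1},\ldots$ as the $\tjdt$'s are peeled off, occupy the columns consistent with placing them in weakly decreasing order, and that $\rho_{(1)}^{-1}$ respects this. Once the straight-shape induction and the last-row verification are in place, commutativity of the diagram follows immediately, and the proof concludes by remarking that $\mu_i(\tau)$ agrees with the construction of Algorithm \ref{results algorithm2}.
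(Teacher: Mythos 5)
Your overall plan is the paper's: iterate Theorem \ref{commutativediagramforPhi} (together with the multiset identity $H_{i}(\tjdt_{i+1}(T))\cup\{f_{i+1}(T)\}=H_{i+1}(T)$) to identify $\Phi_i(\tau)$ with $\rho^{-1}(\tjdt_2\circ\cdots\circ\tjdt_i(T))$ and the exited entries with $H_i(T)$, and then handle the appended bottom row, for which Lemma \ref{lemmaforlastrow} is indeed the tool the paper uses.

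However, the pivotal claim in your last step is false as stated: the entries of $H_i(T)$ do \emph{not} get reinserted into ``a new bottom row'' of the skew $\SSRT$ $\jdt_i(T)$. During a backward slide each horizontally moving entry stays in its own row and moves one column to the right, so the elements of $H_i(T)$ are in general scattered over several rows of $\jdt_i(T)$; in the paper's running example, the entries $5$ and $13$ of $H_9(T)$ sit in the second row of $\jdt_9(T)$, not the bottom row. They form a bottom row only on the $\SSRCT$ side, inside $\mu_i(\tau)$, so there is no ``correct skew $\SSRT$ row'' for $\rho_{(1)}^{-1}$ to match the last row against, and the verification you describe cannot proceed literally. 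The repair is exactly how the paper concludes: $\rho_{(1)}$ is a bijection determined entirely by the multiset of entries in each column (it sorts every column decreasingly and bottom-justifies over the inner box), so it suffices to check that (a) $\mu_i(\tau)$ is a genuine $\SSRCT$ --- rows weakly decreasing by construction, column entries distinct by Lemma \ref{lemma0}, and no triple rule violation involving the new bottom row by Lemma \ref{lemmaforlastrow} --- and (b) $\mu_i(\tau)$ and $\jdt_i(T)$ have identical column multisets, which holds because the element of $H_i(T)$ exiting column $j$ enters column $j+1$ under the slide, and these elements increase weakly as $j$ runs from $i-1$ down to $1$, matching their decreasing left-to-right placement in the last row of $\mu_i(\tau)$. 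With your ``bottom row of $\jdt_i(T)$'' step replaced by this column-level argument, the rest of your outline goes through.
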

\begin{proof}
First, we start by showing that $\mu_{i}(\tau)$ is actually an $\SSRCT$. Notice that the entries in every column are all distinct, using Lemma \ref{lemma0}. Next, since the rows are weakly decreasing by construction, and the top $s$ rows already form an $\SSRCT$, we just need to check that there is no configuration of the form
\begin{eqnarray*}
\ytableausetup{mathmode,boxsize=1.3em}
\begin{ytableau}
x & y\\ \none &\none[\vdots]\\\none & z
\end{ytableau}
\hspace{5mm}\text{ satisfying } x\geq z>y \text{ and $z$ is in the bottommost row.}
\end{eqnarray*}
But this follows from Lemma \ref{lemmaforlastrow}. Hence $\mu_{i}(\tau)$ is an $\SSRCT$. Now the claim follows as the set of entries in each column of $\jdt_{i}(T)$ and $\mu_{i}(\tau)$ is the same.
\end{proof}
Given the theorem above, we are justified in saying that $\mu$ is an analogue of the backward jeu de taquin slide. From this point on, by a backward jeu de taquin slide on $\SSRCT$s of straight shape we will mean an application of $\mu$. 

\subsection{Skew shape after $\mu$}\label{section: jdt operators}
If we perform backward jeu de taquin slides on $\SSRT$s of straight shape, it is easy to predict the resulting shape once the slide is completed. In the case of $\SSRCT$s, when we apply the procedure $\mu$, it is not immediate what the resulting shape is. The aim of this subsection is to use the operators on compositions defined in Section \ref{section: operators on compositions} to predict it.
\begin{Lemma}\label{shapephi}
The shape of $\phi_{i+1}(\ctau)$ is $\down_i(\alpha)$ if $i\geq 1$.
\end{Lemma}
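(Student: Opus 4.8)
The plan is to determine exactly which box of the reverse composition diagram of $\alpha$ disappears when passing from $\ctau$ to $\phi_{i+1}(\ctau)$, and to verify that this box is the rightmost box of the bottommost part of length $i$. Since $\down_i(\alpha)$ is by definition obtained from $\alpha$ by subtracting $1$ from the rightmost part of length $i$---which, in the English convention for reverse composition diagrams, is the bottommost such part---pinning down this box suffices.

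First I would revisit Algorithm \ref{singlecolumnadjustment}, which constructs $\phi_{i+1}(\ctau)$ from $\ctau_{\square}$. Recalling $q_j=s_{k+1-j}$ from Lemma \ref{cruciallemma2}, its first step places $c_{q_{j-1}}=c_{s_{k+2-j}}$ into the box originally holding $c_{q_j}=c_{s_{k+1-j}}$ for $j=k,\ldots,2$, and its second step places a $0$ into the box originally holding $c_{q_1}=c_{s_k}$. The key bookkeeping observation is that each of $c_{s_1},\ldots,c_{s_k}$ is a positive integer by the defining conditions of the $s_j$, so within the $i$-th column the only box whose entry changes from positive to $0$ is the box originally occupied by $c_{s_k}$, while all remaining boxes of the $i$-th column keep positive entries and no entry outside the $i$-th column is touched. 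Consequently, once $0$s are omitted, exactly one box is deleted from the diagram of $\alpha$, namely the box in position $(s_k,i)$.

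Next I would locate this box inside $\alpha$ via Lemma \ref{cruciallemma}. Part (1) gives $d_{s_k}=0$; together with $c_{s_k}>0$ this says that row $s_k$ of $\alpha$ has length exactly $i$ and that $(s_k,i)$ is its rightmost box. Part (2) states that any $t>s_k$ with $c_t>0$ satisfies $d_t>0$, so every row below $s_k$ that reaches column $i$ extends strictly past it; hence no row below $s_k$ has length exactly $i$, and $s_k$ is the bottommost part of length $i$ in $\alpha$. Deleting $(s_k,i)$ therefore shortens this bottommost part of length $i$ from $i$ to $i-1$ and leaves every other part unchanged, which is precisely the effect of $\down_i$.

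Finally I would address the two boundary behaviours so that the identification with $\down_i(\alpha)$ is exact. For $i\geq 2$ the shortened part has length $i-1\geq 1$, so the resulting array is already the reverse composition diagram of $\down_i(\alpha)$. For $i=1$ the shortened part has length $0$ and is discarded in the last step of Algorithm \ref{singlecolumnadjustment}, matching the convention in the definition of $\down_1$ that a part of length $1$ is removed outright. I do not anticipate a genuine obstacle: the substantive work is already carried out in Lemma \ref{cruciallemma}, and all that remains is the careful accounting showing that the rearrangement in Algorithm \ref{singlecolumnadjustment} operates entirely within the $i$-th column and thus removes precisely the single box $(s_k,i)$.
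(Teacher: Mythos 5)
Your proposal is correct and follows essentially the same route as the paper's own (much terser) proof: both identify the unique box removed by Algorithm \ref{singlecolumnadjustment} as the one containing $c_{s_k}$, and both invoke Lemma \ref{cruciallemma} to place that box as the rightmost box of the bottommost part of length $i$, so that its deletion realizes $\down_i(\alpha)$. Your additional bookkeeping (that the rearrangement stays within column $i$, and the treatment of the $i=1$ case via the row-removal step) simply makes explicit what the paper leaves as "clear."
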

\begin{proof}
It is clear from Algorithm \ref{singlecolumnadjustment} that box that contained $c_{s_k}$ in $\ctau_{\square}$ contains $0$ in $\phi_{i+1}(\ctau_{\square})$. We also know from Lemma \ref{cruciallemma} that all parts of $\alpha$ that are below $\alpha_{s_k}$ in the reverse composition diagram corresponding to $\alpha$ have either length strictly less than $\alpha_{s_k}$ or strictly greater than $\alpha_{s_k}$. Furthermore, the same lemma also implies that $\alpha_{s_k}$ has length $i$. Thus, the claim follows.
\end{proof}
As an immediate corollary, we obtain the following.
\begin{Corollary}\label{shapePhi}
The shape of $\Phi_{i+1}(\ctau)$ is $\downrow_{[i]}(\alpha)$ if $i\geq 0$.
\end{Corollary}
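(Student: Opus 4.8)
The plan is to prove this by induction on $i$, feeding the single-step shape computation of Lemma~\ref{shapephi} through the recursive definition of $\Phi$. The only genuinely new ingredient beyond Lemma~\ref{shapephi} is bookkeeping of the order in which the box-removing operators are applied, so I will first make the conventions line up. For the base case $i=0$, one has $\Phi_{1}(\ctau)=\ctau$ by definition, which has shape $\alpha$, and $\downrow_{[0]}$ is the identity map; hence $\sha(\Phi_{1}(\ctau))=\alpha=\downrow_{[0]}(\alpha)$, as required.

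For the inductive step, assume that the shape of $\Phi_{i+1}(\cdot)$ equals $\downrow_{[i]}$ applied to the shape of its input, for every admissible input. Decompose $\Phi_{i+2}=\phi_{2}\circ\cdots\circ\phi_{i+2}=\Phi_{i+1}\circ\phi_{i+2}$. By Lemma~\ref{shapephi} the shape of $\phi_{i+2}(\ctau)$ is $\down_{i+1}(\alpha)$. Applying the inductive hypothesis to the $\SSRCT$ $\phi_{i+2}(\ctau)$, whose shape is $\down_{i+1}(\alpha)$, shows that the shape of $\Phi_{i+1}(\phi_{i+2}(\ctau))$ is $\downrow_{[i]}(\down_{i+1}(\alpha))$. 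Since $\downrow_{[i]}=\down_{1}\down_{2}\cdots\down_{i}$ and, by the convention fixed when $\downrow_{S}$ was introduced, these operators act with $\down_{i}$ applied first, one has $\downrow_{[i]}\down_{i+1}=\down_{1}\down_{2}\cdots\down_{i}\down_{i+1}=\downrow_{[i+1]}$. Hence $\sha(\Phi_{i+2}(\ctau))=\downrow_{[i+1]}(\alpha)$, completing the induction. The indexing matches because the chain $\phi_{2}\circ\cdots\circ\phi_{i+1}$ reads from the right, so $\phi_{i+1}$ acts first and contributes $\down_{i}$, exactly as $\downrow_{[i]}$ applies $\down_{i}$ first.

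The one point that requires care, and which I expect to be the main (though still minor) obstacle, is that every $\phi_{j}$ appearing in $\Phi_{i+2}$ must actually be applicable, i.e.\ that at each intermediate stage the relevant addable node exists so that Lemma~\ref{shapephi} is in force. This is settled on the $\SSRT$ side: after $\tjdt_{i+2}(T)$ the underlying partition acquires an addable node in column $i+1$, which is precisely the observation underlying the recursion $H_{i+1}(T)=H_{i}(\tjdt_{i+1}(T))\cup\{f_{i+1}(T)\}$, so $\tjdt_{i+1}$ may then be applied, and so on down the chain. Transporting this through the commuting square of Theorem~\ref{commutativediagramforPhi}, under which $\phi_{j+1}(\ctau)=\rho^{-1}(\tjdt_{j+1}(T))$, guarantees that the corresponding $\SSRCT$ at each stage admits the next $\phi_{j}$. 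In particular, no separate verification that the intermediate fillings are $\SSRCT$s is needed, since that is already built into each $\phi_{j+1}$ via Theorem~\ref{commutativediagramforPhi}.
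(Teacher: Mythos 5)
Your proof is correct and takes essentially the same approach as the paper: the paper's own proof simply notes that $\Phi_{i+1}(\ctau)=\phi_{2}\circ\cdots\circ\phi_{i+1}(\ctau)$ and applies Lemma~\ref{shapephi} repeatedly to obtain the shape $\down_{1}\cdots\down_{i}(\alpha)=\downrow_{[i]}(\alpha)$, which is precisely your induction stated informally. Your extra paragraph checking that each intermediate $\phi_{j}$ is applicable (via the addable-node observation on the $\SSRT$ side and Theorem~\ref{commutativediagramforPhi}) is left implicit in the paper but is consistent with its standing assumptions.
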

\begin{proof}
$\Phi_{1}$ fixes the $\SSRCT$ $\ctau$. In this case the claim is clear. If we are computing $\Phi_{i+1}(\ctau)$ for $i\geq 1$, we are computing $\phi_{2}\circ \cdots \circ \phi_{i+1}(\ctau)$. Using Lemma \ref{shapephi} repeatedly, we get that $\Phi_{i+1}(\ctau)$ has shape $\down_{1}\cdots \down_{i}(\alpha)=\downrow_{[i]}(\alpha)$.
\end{proof}
\begin{Theorem}\label{shapemu}
The $\SSRCT$ $\mu_{i}(\ctau)$ is of shape $\gamma\cskew (1)$ where $\gamma=\up_i(\alpha)$.
\end{Theorem}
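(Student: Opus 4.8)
The plan is to read off the shape of $\mu_i(\tau)$ directly from its recursive construction together with Corollary \ref{shapePhi}, so that essentially no new combinatorics is needed: the hard work of tracking the shape has already been carried out in computing $\sha(\Phi_i(\tau))$.

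First I would recall the construction of $\mu_i$ given just before Theorem \ref{backwardjdtslideonssrct}. If $\gamma = \sha(\Phi_i(\tau)) = (\gamma_1,\ldots,\gamma_s)$, then $\mu_i(\tau)$ is the filling of the skew reverse composition shape $(\gamma,i)\cskew(1)$ whose top $s$ rows carry $\Phi_i(\tau)$ and whose last row holds the entries of $H_i(T)$ in decreasing order. Consequently the outer shape of $\mu_i(\tau)$ is exactly $(\gamma,i)$, the composition obtained from $\sha(\Phi_i(\tau))$ by appending a part of length $i$ at the bottom. In the language of Section \ref{section: operators on compositions}, appending such a part is precisely the operator $a_i$, so the outer shape of $\mu_i(\tau)$ equals $a_i(\sha(\Phi_i(\tau)))$.

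Next I would invoke Corollary \ref{shapePhi}, which gives $\sha(\Phi_i(\tau)) = \downrow_{[i-1]}(\alpha)$ (this is the corollary read with $i$ replaced by $i-1$). Substituting, the outer shape of $\mu_i(\tau)$ is $a_i\downrow_{[i-1]}(\alpha)$, which is exactly $\up_i(\alpha)$ by the very definition $\up_i = a_i\downrow_{[i-1]}$ of the jdt operator. Hence $\mu_i(\tau)$ has shape $\up_i(\alpha)\cskew(1)$, as claimed. That $\mu_i(\tau)$ is genuinely an $\SSRCT$, so that this skew shape is well defined, has already been established in Theorem \ref{backwardjdtslideonssrct}, and I would cite that rather than reprove it.

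The argument is thus a short chain of identifications, and I do not expect any serious obstacle beyond bookkeeping. The one point I would verify carefully is the index shift when applying Corollary \ref{shapePhi}: the corollary is stated for $\Phi_{i+1}$ with shape $\downrow_{[i]}$ valid for $i\geq 0$, and I must confirm that reading it as $\Phi_i$ with shape $\downrow_{[i-1]}$ stays within that range (here $i\geq 1$). I would also note, via Remark \ref{remark: up operator on partition}, that the standing assumption of an addable node in the relevant column guarantees that $\up_i(\alpha)$ is a genuine composition rather than $0$, so the operator does not annihilate $\alpha$ and the output shape is legitimate.
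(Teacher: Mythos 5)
Your proposal is correct and is essentially the paper's own argument: the paper's proof of this theorem is precisely the one-line observation that the shape follows from the construction of $\mu_i$ (outer shape $a_i(\sha(\Phi_i(\tau)))$) together with Corollary \ref{shapePhi}, which you have simply spelled out with the index shift and the identity $\up_i = a_i\downrow_{[i-1]}$ made explicit.
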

\begin{proof}
This follows from the description of the procedure $\mu_{i}$ and Corollary \ref{shapePhi}.
\end{proof}

\section{Backward jeu de taquin slides for SSRCTs of skew shape}\label{section: slides for skew shape}
After giving a description for the backward jeu de taquin slide for $\SSRCT$s of straight shape, it is only natural to ask for a generalization, that is, a backward jeu de taquin slide for $\SSRCT$s of other shapes. For the proofs in this section, we will consider $\SSRCT$s whose entries are drawn from the ordered alphabet $X=A\cup \tcr{\bar{A}}$ where
\begin{eqnarray*}
A&=&\{1<2<3<\cdots\}\\
\tcr{\tcr{\tcr{\bar{A}}}}&=&\{\tcr{\bar{1}}<\tcr{\bar{2}}<\tcr{\bar{3}}<\cdots\}.
\end{eqnarray*}
We will further assume that $\tcr{\bar{1}}$ is greater than any letter in $A$. Thus, we have a total order on $X$.

Consider a skew reverse composition shape $\alpha \cskew\beta$. Let $\ctau_o$ be an $\SSRCT$ of shape $\alpha\cskew\beta$ constructed using the alphabet $A$, and let $\ctaup$ be an $\SSRCT$ of straight shape $\beta$ constructed using the alphabet $\tcr{\bar{A}}$. Then we will denote by $\tau$ the $\SSRCT$ of straight shape $\alpha$ formed by filling the inner shape in $\ctau_o$ according to $\ctaup$.

Let $i\geq 1$. Assume, for the moment, that we are computing $\phi_{i+1}(\tau)$. We will show that the shape of the $\SSRCT$ obtained by considering entries that belong to $A$ in $\phi_{i+1}(\tau)$ is independent of the filling $\ctaup$. The example next will clarify what we are aspiring to.
\begin{Example}
Consider the two tableaux below.
\begin{eqnarray*}
\ctau_o=\ytableausetup{centertableaux}
\begin{ytableau}
3\\\bullet & \bullet & 2& 1\\\bullet & \bullet & \bullet & 5\\\bullet & 4
\end{ytableau}
\text{ and }
\ctaup=\ytableausetup{centertableaux}
\begin{ytableau}
\tcr{\bar{3}}& \tcr{\bar{1}}\\\tcr{\bar{5}} & \tcr{\bar{4}} & \tcr{\bar{2}}\\\tcr{\bar{6}}
\end{ytableau}
\end{eqnarray*}
Thus
\begin{eqnarray*}
\tau=\ytableausetup{centertableaux}
\begin{ytableau}
3\\\tcr{\bar{3}} & \tcr{\bar{1}} & 2& 1\\\tcr{\bar{5}} &\tcr{\bar{4}}& \tcr{\bar{2}} & 5\\\tcr{\bar{6}} & 4 
\end{ytableau}.
\end{eqnarray*}
Then $\phi_{3}(\tau)$ is the $\SSRCT$ below.
\begin{eqnarray*}
\ytableausetup{mathmode,boxsize=1.3em}
\begin{ytableau}
3\\\tcr{\bar{3}} & 4 & 2& 1\\\tcr{\bar{5}} &\tcr{\bar{4}}& \tcr{\bar{2}} & 5\\\tcr{\bar{6}}  
\end{ytableau}
\end{eqnarray*}
If we replace the entries in the above tableau that belong to $\tcr{\bar{A}}$ by bullets, then we obtain the following $\SSRCT$.
\begin{eqnarray*}
\ytableausetup{mathmode,boxsize=1.3em}
\begin{ytableau}
3\\\bullet & 4 & 2& 1\\\bullet &\bullet & \bullet & 5\\\bullet  
\end{ytableau}
\end{eqnarray*}
We would like to demonstrate that the above tableau is obtained independent of our choice of the filling $\ctaup$.
\end{Example}
We will use the same notation as the one we established when we described the process $\phi_{i+1}$. Thus, our focus is on the $i$-th and $i+1$-th columns of $T_{\square}$ and $\tau_{\square}$ respectively, as shown below. Recall that $\rho(\tau)=T$.
\begin{eqnarray*}
\ytableausetup{mathmode,boxsize=1.3em}
\begin{ytableau}
a_1 & b_1\\a_2& b_2\\ \none[\vdots] & \none[\vdots]\\a_n & b_n
\end{ytableau}
\hspace{20mm}
\begin{ytableau}
c_{1}& d_{1}\\c_{2}& d_{2}\\ \none[\vdots] & \none[\vdots]\\ c_{n} & d_{n}
\end{ytableau}
\end{eqnarray*}
The only difference from the earlier situation is that the entries are now allowed to be elements of $\tcr{\bar{A}}$.
We will recall the notation here for the sake of convenience. We have that $a_{min}$ is the first entry that moves horizontally when a backward jeu de taquin slide is initiated from the $i+1$-th column in $T$. Hence $min$ is the smallest positive integer such that $a_{min}<b_{min+1}$. Also we assume that, in $\tau_{\square}$, the entry corresponding to $a_{min}$ is $c_{s_1}$, that is, $s_1$ is the positive integer such that $c_{s_1}=a_{min}$. Finally, we define recursively integers $s_j$ satisfying $1\leq s_j\leq n$ for $j\geq 2$ using the following criteria.
\begin{enumerate}
\item $s_{j}>s_{j-1}$,
\item $c_{s_j}$ is the greatest positive integer that satisfies
 \begin{empheq}[box=\mybluebox]{equation} 
 c_{s_{j-1}} > c_{s_j} \geq d_{s_{j-1}}.
 \end{empheq}
\end{enumerate}
Clearly, only finitely many $s_j$ exist. Let the maximum be given by $s_k$ and let 
\begin{empheq}[box=\mybluebox]{equation} 
S=\{s_1,\ldots ,s_k\}.
\end{empheq}

\begin{Lemma}\label{cruciallemmaforskewshape}
Assume $c_{s_1}\in \tcr{\bar{A}}$. Let $1\leq j\leq k$ be the greatest positive integer such that $c_{s_j}$ belongs to $\tcr{\bar{A}}$. If $t>s_j$ is a positive integer such that $c_{t}>0$, then one of the following statements holds.
\begin{enumerate}
\item $c_{t}\in A$.
\item $c_{t}\in\tcr{\bar{A}}$ and $d_{t}\in \tcr{\bar{A}}$.
\end{enumerate}
\end{Lemma}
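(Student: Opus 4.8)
The plan is to reduce the statement to the order structure $A<\tcr{\bar{A}}$ together with three facts already established for the straight-shape analysis, namely Lemmas \ref{lemma1}, \ref{precruciallemma2} and \ref{cruciallemma}; these carry over verbatim to the present setting, since their proofs only used that the entries lie in a totally ordered alphabet and that $\tau$ (respectively $T=\rho(\tau)$) is an $\SSRCT$ (respectively an $\SSRT$). First I would record the basic order observation: because $c_{s_1}>c_{s_2}>\cdots>c_{s_k}$ and every letter of $\tcr{\bar{A}}$ exceeds every letter of $A$, the barred values among the $c_{s_\ell}$ form an initial segment $c_{s_1},\ldots,c_{s_j}$ of this chain. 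In particular, if $j<k$ then $s_{j+1}$ exists and $c_{s_{j+1}}$, being a nonzero entry that is not in $\tcr{\bar{A}}$, must lie in $A$.

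Next I would split on the position of $c_t$ in the alphabet. If $c_t\in A$ then conclusion (1) holds and there is nothing to prove, so assume $c_t\in\tcr{\bar{A}}$ with $t>s_j$. The core of the argument is to deduce that $c_t>c_{s_1}$, and I would do this in two cases. If $j=k$, then $s_j=s_k$ and Lemma \ref{cruciallemma}(2) applies directly: any $t>s_k$ with $c_t>0$ satisfies $c_t>c_{s_1}$. If $j<k$, then $c_{s_{j+1}}\in A$ while $c_t\in\tcr{\bar{A}}$, so $c_t\neq c_{s_{j+1}}$ and hence $t\neq s_{j+1}$; Lemma \ref{precruciallemma2} then gives the dichotomy $c_t>c_{s_1}$ or $c_t<c_{s_{j+1}}$, and the second alternative is impossible because a barred letter cannot be smaller than the unbarred letter $c_{s_{j+1}}$. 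In either case $c_t>c_{s_1}$.

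Finally I would convert the inequality $c_t>c_{s_1}$ into the required statement about $d_t$ via Lemma \ref{lemma1}, which yields $d_t\geq b_{min+1}$. Since $c_{s_1}=a_{min}$ lies in $\tcr{\bar{A}}$ and $b_{min+1}>a_{min}$, the entry $b_{min+1}$ is itself barred; thus $d_t\geq b_{min+1}\in\tcr{\bar{A}}$ forces $d_t\in\tcr{\bar{A}}$, which is exactly conclusion (2). The one point that needs care, and which I regard as the main obstacle, is the case split $j=k$ versus $j<k$: Lemma \ref{precruciallemma2} is only available once an index $s_{j+1}$ exists, so the degenerate situation in which all of $c_{s_1},\ldots,c_{s_k}$ are barred must be handled separately through Lemma \ref{cruciallemma}. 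Everything else is routine bookkeeping of the total order on $X$.
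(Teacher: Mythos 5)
Your proof is correct and follows essentially the same route as the paper: the same initial-segment observation, the same case split ($j=k$ handled by Lemma \ref{cruciallemma}, $j<k$ handled via Lemma \ref{precruciallemma2} after noting $t\neq s_{j+1}$), and the same final step using Lemma \ref{lemma1} to get $d_t\geq b_{min+1}\in\tcr{\bar{A}}$. The only difference is a minor streamlining: in the case $j<k$ the paper first proves $c_t>c_{s_j}$ by a separate contradiction argument (using $d_{s_j}\in A$) before invoking Lemma \ref{precruciallemma2}, whereas you apply the dichotomy of Lemma \ref{precruciallemma2} directly and discard the alternative $c_t<c_{s_{j+1}}$ because a barred letter cannot lie below an unbarred one --- a harmless, and in fact slightly cleaner, shortcut.
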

\begin{proof}
Firstly, note that $c_{s_1}\in \tcr{\bar{A}}$ implies that $b_{min+1}\in
\tcr{\bar{A}}$. Notice also that if $c_{s_j} \in \tcr{\bar{A}}$, then for all $1\leq
r\leq j$, we must have that $c_{s_r}\in \tcr{\bar{A}}$ as
$c_{s_1}>\cdots > c_{s_j}>\cdots > c_{s_k}$.

If $c_{t}\in A$, then the first condition is already satisfied. 
Hence assume that $c_{t}\in \tcr{\bar{A}}$. 
If $j=k$, then Lemma \ref{cruciallemma} implies that $c_{t}>c_{s_1}$. 
Then, Lemma \ref{lemma1} implies $d_{t} \geq
b_{min+1}$. Hence $d_{t} \in \tcr{\bar{A}}$.

Now assume that $j<k$. We have that $c_{s_j}\in \tcr{\bar{A}}$ and
$c_{s_{j+1}}\in A$. Since the definition of $s_{j+1}$ implies that
\begin{eqnarray*}
c_{s_{j}}>c_{s_{j+1}}\geq d_{s_j},
\end{eqnarray*} 
we get that $d_{s_j}\in A$. 

By our assumptions, we know that $t>s_j$ and
$c_{t}\in \tcr{\bar{A}}$. Using this and the fact that $j$ is the largest integer such that $c_{s_j}\in \tcr{\bar{A}}$, we conclude that $t\neq s_{j+1}$. But then, we must have that
$c_{t}>c_{s_j}$. To see this, note that if $c_t<c_{s_j}$, then we have the following inequality using the fact that $d_{s_j}\in A$.
\begin{align*}
c_{s_j}>c_t>d_{s_j}
\end{align*}
But then, we get that $c_{s_{j+1}}>c_t$ and hence, must belong to $\tcr{\bar{A}}$, which is not the case. Hence $c_t>c_{s_j}$ indeed.

Lemma \ref{precruciallemma2}
implies the inequality $c_{t}>c_{s_1}$ and then Lemma \ref{lemma1} implies that $d_{t} \geq
b_{min+1}$. Thus, we get that $d_{t} \in \tcr{\bar{A}}$.
\end{proof}

Note that the above lemma assumes nothing about the filling $\ctaup$.
\begin{Lemma}\label{lemma: innershapechange}
Assume $c_{s_1}\in \tcr{\bar{A}}$. Consider the $\SSRCT$ of straight shape
inside $\phi_{i+1}(\tau)$ formed by the entries that belong to $\tcr{\bar{A}}$. The shape of this $\SSRCT$ is $\down_{i}(\beta)$.
\end{Lemma}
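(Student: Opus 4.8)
The plan is to read off the effect of $\phi_{i+1}$ directly from Algorithm~\ref{singlecolumnadjustment} and then track, cell by cell, which boxes of the $i$-th column carry entries of $\bar{A}$ before and after the algorithm is run. Recall from Algorithm~\ref{singlecolumnadjustment} (and its use in the proof of Lemma~\ref{shapephi}) that $\phi_{i+1}$ alters only the $i$-th column, and within it only the boxes indexed by $S=\{s_1,\ldots,s_k\}$: the box that originally held $c_{s_r}$ comes to hold $c_{s_{r+1}}$ for $1\le r\le k-1$, while the box that held $c_{s_k}$ is vacated. Let $j$ be the greatest index with $c_{s_j}\in\bar{A}$, exactly as in Lemma~\ref{cruciallemmaforskewshape}; this exists since $c_{s_1}\in\bar{A}$ by hypothesis. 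Because $c_{s_1}>c_{s_2}>\cdots>c_{s_k}$ and every letter of $\bar{A}$ exceeds every letter of $A$, we have $c_{s_1},\ldots,c_{s_j}\in\bar{A}$ and $c_{s_{j+1}},\ldots,c_{s_k}\in A$.

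First I would determine precisely which boxes gain or lose a $\bar{A}$-entry. For $1\le r\le j-1$ the box at position $(s_r,i)$ passes from $c_{s_r}\in\bar{A}$ to $c_{s_{r+1}}\in\bar{A}$, so it stays in the $\bar{A}$-region; for $j<r\le k-1$ it passes from $c_{s_r}\in A$ to $c_{s_{r+1}}\in A$, so it stays outside. The box $(s_j,i)$ is the only one whose status flips: if $j<k$ it changes from $c_{s_j}\in\bar{A}$ to $c_{s_{j+1}}\in A$, and if $j=k$ it is vacated (with $c_{s_k}=c_{s_j}\in\bar{A}$). Since every cell of $\phi_{i+1}(\tau)$ outside the boxes of $S$ in column $i$ is untouched, the set of boxes carrying $\bar{A}$-entries is obtained from the inner shape $\beta$ by deleting exactly the cell $(s_j,i)$, with no cell added.

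It then remains to check that $(s_j,i)$ is the rightmost cell of the rightmost (equivalently, bottommost) row of length $i$ in $\beta$, so that deleting it realises $\down_i(\beta)$. That row $s_j$ has $\bar{A}$-length exactly $i$ amounts to $c_{s_j}\in\bar{A}$, true by choice of $j$, together with $d_{s_j}\notin\bar{A}$: when $j<k$ this follows from $c_{s_j}>c_{s_{j+1}}\ge d_{s_j}$ with $c_{s_{j+1}}\in A$, forcing $d_{s_j}\in A\cup\{0\}$, and when $j=k$ it follows from $d_{s_k}=0$, established in Lemma~\ref{cruciallemma}. That $s_j$ is the bottommost such row is exactly Lemma~\ref{cruciallemmaforskewshape}: for every $t>s_j$ with $c_t>0$, either $c_t\in A$, so row $t$ has $\bar{A}$-length below $i$, or $c_t,d_t\in\bar{A}$, so its $\bar{A}$-length exceeds $i$; in neither case is it $i$, and $c_t=0$ makes row $t$ too short. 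Here I use that the $\bar{A}$-entries are left-justified in each row, so the $\bar{A}$-length of a row is determined by its initial run. Combining these, deleting $(s_j,i)$ subtracts one from the rightmost part of length $i$ of $\beta$, which is by definition $\down_i(\beta)$. The substantive input is Lemma~\ref{cruciallemmaforskewshape}, which guarantees that the affected row is the bottommost one of length $i$; the rest is bookkeeping about how $\phi_{i+1}$ permutes the $i$-th column, and the only delicate point is separating the cases $j<k$ and $j=k$ when verifying $d_{s_j}\notin\bar{A}$.
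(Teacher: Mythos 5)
Your proof is correct and follows essentially the same route as the paper: identify the maximal $j$ with $c_{s_j}\in\bar{A}$, use Lemma \ref{cruciallemmaforskewshape} (together with Lemma \ref{cruciallemma} in the case $j=k$) to place $c_{s_j}$ at the rightmost box of the bottommost row of length $i$ in $\beta$, and observe that Algorithm \ref{singlecolumnadjustment} replaces that entry by an element of $A$ or by $0$. The only difference is that you spell out the bookkeeping (which boxes change $\bar{A}$-status, and why $d_{s_j}\notin\bar{A}$ in both cases) that the paper's shorter proof leaves implicit.
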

\begin{proof}
Let $1\leq j\leq k$ be the greatest positive integer such that $c_{s_j}\in \tcr{\bar{A}}$. Then from Lemma \ref{cruciallemmaforskewshape}, it follows that $c_{s_j}$ occupies the rightmost box in the bottommost row of length $i$ in the reverse composition diagram of $\beta$. According to Algorithm \ref{singlecolumnadjustment}, this entry gets replaced either by an element of $A$ or $0$. In either case, the shape of the $\SSRCT$ formed by the entries that belong to $\tcr{\bar{A}}$ is $\down_{i}(\beta)$.
\end{proof}

\begin{Lemma}\label{lemma: exiting entry from A}
Assume that $c_{s_1}\in A$. Then the $\SSRCT$ in $\phi_{i+1}(\tau)$ formed by considering the entries that belong to $A$ is independent of the choice of $\bar{\tau}$. Furthermore, this $\SSRCT$ has shape $\down_{i}(\alpha) \cskew \beta$.
\end{Lemma}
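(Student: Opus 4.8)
The plan is to verify two things separately: that the indicated sub-tableau really has shape $\down_i(\alpha)\cskew\beta$, and that it does not depend on $\ctaup$. I would begin by recording the single structural fact that drives the whole argument. Since $c_{s_1}\in A$ and $c_{s_1}>c_{s_2}>\cdots>c_{s_k}>0$, while every element of $A$ precedes every element of $\bar{A}$ in the order on $X$, \emph{all} of the entries $c_{s_1},\dots,c_{s_k}$ lie in $A$. Consequently Algorithm \ref{singlecolumnadjustment}, which produces $\phi_{i+1}(\tau)$, only relocates these $A$-entries among the boxes of $S=\{s_1,\dots,s_k\}$ in the $i$-th column and replaces $c_{s_k}$ by $0$; every box carrying an element of $\bar{A}$ is left exactly where it was.

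For the shape assertion, the box emptied by the algorithm is $(s_k,i)$, and since $c_{s_k}\in A$ this is an outer box, i.e. it does not belong to the inner shape $\beta$. Hence the $\bar{A}$-entries continue to fill the straight shape $\beta$ in the lower-left corner, while by Lemma \ref{shapephi} the shape of $\phi_{i+1}(\tau)$ is $\down_i(\alpha)$. Replacing the $\bar{A}$-entries by bullets therefore exhibits the $A$-subtableau as an $\SSRCT$ of shape $\down_i(\alpha)\cskew\beta$, which is the ``furthermore'' clause.

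The main work, and the step I expect to be the real obstacle, is the independence from $\ctaup$. I would prove it by induction along the $q$-recursion used in Algorithm \ref{singlecolumnadjustment}, which has the advantage of starting from $q_1=s_k$, a row determined by the shape $\alpha$ alone. The inductive claim is that each $q_j$ (equivalently, by Lemma \ref{cruciallemma2}, each $s_{k+1-j}$) and its value $c_{q_j}\in A$ are independent of $\ctaup$. The crux is that, \emph{given} $q_{j-1}$ and $c_{q_{j-1}}\in A$, the set of rows $t<q_{j-1}$ qualifying for selection --- those with $c_t>c_{q_{j-1}}$ and $d_t\le c_{q_{j-1}}$ --- is itself independent of $\ctaup$. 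Indeed, for a row $t$ whose box $(t,i)$ lies in $\beta$ the entry $c_t\in\bar{A}$ automatically satisfies $c_t>c_{q_{j-1}}$ regardless of the value supplied by $\ctaup$, so whether such a row qualifies is governed solely by the second inequality $d_t\le c_{q_{j-1}}$, which can hold only when the neighbor $d_t$ lies in $A\cup\{0\}$ and is hence fixed by $\ctau_o$ and by which boxes of column $i+1$ are inner; for a row $t$ with $c_t\in A$ all the relevant data are read off $\ctau_o$ and the shape. Since selection takes the greatest qualifying row, both $q_j$ and whether it is an $A$-row are independent of $\ctaup$, and the hypothesis $c_{s_1}\in A$ forces every $q_j$ to be an $A$-row, so each $c_{q_j}$ is an $A$-value coming from $\ctau_o$. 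Thus the whole sequence $q_1,\dots,q_k$, and with it the relocation of $A$-entries prescribed by Algorithm \ref{singlecolumnadjustment}, is independent of $\ctaup$, proving that the $A$-subtableau of $\phi_{i+1}(\tau)$ is independent of $\ctaup$. The delicate point to get right is precisely that a box of $\bar{A}$ \emph{can} qualify in the $q$-recursion (because the inequality sought is $c_t>c_{q_{j-1}}$), so one must argue that it is the \emph{qualification} of such a row, and not the arbitrary $\bar{A}$-value it carries, that matters, and that this qualification is shape-determined.
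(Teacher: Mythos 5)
Your proposal is correct and takes essentially the same approach as the paper: both arguments rest on the observation that $c_{s_1}\in A$ forces every $c_{s_j}$ into $A$, so that Algorithm \ref{singlecolumnadjustment} only relocates $A$-entries and vacates the shape-determined box $(s_k,i)$ (the rightmost box of the bottommost part of length $i$ in $\alpha$), yielding the shape $\down_{i}(\alpha)\cskew\beta$. Where the paper compresses the independence-of-$\ctaup$ claim into ``Algorithm \ref{singlecolumnadjustment} immediately implies the claim,'' you spell it out by inducting along the $q$-recursion from the shape-determined row $q_1=s_k$, including the genuinely delicate point that a row of the inner shape can qualify in that recursion but its qualification is governed by $\ctau_o$ and the shapes alone; this is a faithful elaboration of the paper's argument rather than a different one.
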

\begin{proof}
Since $c_{s_1}$ belongs to $A$ and $c_{s_1} > c_{s_j}$ for all $2\leq j\leq k$, we get that $c_{s_j}\in A$ for $2\leq j\leq k$. Note by Lemma \ref{cruciallemma} that $c_{s_k}$ occupies the rightmost box in the bottommost part of length $i$ in the reverse composition diagram of $\alpha$. Now Algorithm \ref{singlecolumnadjustment} immediately implies the claim.
\end{proof}

\begin{Lemma}\label{lemma: exiting entry from Abar}
Assume that $c_{s_1}\in \tcr{\bar{A}}$. Then the $\SSRCT$ in $\phi_{i+1}(\tau)$ formed by considering the entries that belong to $A$ is independent of the choice of $\bar{\tau}$. Furthermore, this $\SSRCT$ has shape $\down_{i}(\alpha) \cskew \down_{i}(\beta)$.
\end{Lemma}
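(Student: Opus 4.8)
The plan is to follow the short argument used for Lemma~\ref{lemma: exiting entry from A}, but now to track the $A$-entries and the barred entries simultaneously and to obtain the shape of the $A$-part by subtracting the barred part from the whole. Throughout, let $j_0$ denote the largest index with $c_{s_{j_0}}\in\tcr{\bar{A}}$; since $c_{s_1}>\cdots>c_{s_k}$ and barred letters exceed every letter of $A$, this means $c_{s_1},\ldots,c_{s_{j_0}}\in\tcr{\bar{A}}$ while $c_{s_{j_0+1}},\ldots,c_{s_k}\in A$. Recall that Algorithm~\ref{singlecolumnadjustment} slides $c_{s_{j+1}}$ into the box that held $c_{s_j}$ for $1\le j\le k-1$ and empties the box of $c_{s_k}$, leaving every box outside column $i$, and every box of column $i$ not indexed by some $s_j$, untouched.

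\emph{The shape.} By Lemma~\ref{shapephi} the full shape of $\phi_{i+1}(\tau)$ is $\down_i(\alpha)$, and by Lemma~\ref{lemma: innershapechange} the barred entries of $\phi_{i+1}(\tau)$ form an $\SSRCT$ of shape $\down_i(\beta)$. As barred letters are the largest and rows are weakly decreasing, the barred entries form a left-justified prefix of each row, hence constitute the inner shape; deleting them therefore leaves the $A$-entries as a skew $\SSRCT$ on the complement $\down_i(\alpha)\cskew\down_i(\beta)$. Concretely, when $j_0<k$ the box $(s_{j_0},i)$ — which by Lemma~\ref{lemma: innershapechange} is the rightmost box of the bottommost length-$i$ row of $\beta$ — receives the $A$-entry $c_{s_{j_0+1}}$ and is thus gained by the $A$-region, while the box $(s_k,i)$ is emptied and lost; a direct computation of the box set $(\alpha\cskew\beta)\cup\{(s_{j_0},i)\}\setminus\{(s_k,i)\}$ confirms it equals $\down_i(\alpha)\cskew\down_i(\beta)$. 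The degenerate case $j_0=k$ (all $c_{s_j}$ barred) must be treated on its own: here $\down_i(\alpha)$ and $\down_i(\beta)$ delete the same box $(s_k,i)\in\beta$, no $A$-entry moves, and the $A$-region is literally unchanged, which again equals $\down_i(\alpha)\cskew\down_i(\beta)$.

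\emph{Independence of $\ctaup$.} The only $A$-entries disturbed by the slide are $c_{s_{j_0+1}},\ldots,c_{s_k}$, so it suffices to show that the positions $s_{j_0}<s_{j_0+1}<\cdots<s_k$, the values $c_{s_{j_0+1}}>\cdots>c_{s_k}$, and the column-$(i+1)$ entries feeding the recursion are all read off $\ctau_o$ alone. First, $s_{j_0}$ is the row of the rightmost box of the bottommost length-$i$ row of $\beta$, a position depending only on the shape $\beta$ and hence on $\ctau_o$; moreover $(s_{j_0},i+1)\notin\beta$, so its entry $d_{s_{j_0}}$ is either $0$ or lies in $A$, again $\ctau_o$-data. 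Since $c_{s_{j_0}}\in\tcr{\bar{A}}$ exceeds every $A$-value, the constraint $c_{s_{j_0}}>c_{s_{j_0+1}}\ge d_{s_{j_0}}$ selects $c_{s_{j_0+1}}$ as the greatest \emph{qualifying} entry; by the definition of $j_0$ this greatest qualifying entry lies in $A$, so no barred entry can qualify (a barred qualifier would exceed it), and $c_{s_{j_0+1}}$ is exactly the greatest $A$-entry below row $s_{j_0}$ that is $\ge d_{s_{j_0}}$ — purely $\ctau_o$-data. From this seed onward every $c_{s_j}$ is an $A$-value, so each subsequent step of the recursion forces its $d$-neighbour to be unbarred and runs entirely inside $A$, exactly as in the proof of Lemma~\ref{lemma: exiting entry from A}. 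Hence the whole disturbed $A$-chain, and therefore the resulting $A$-$\SSRCT$, is independent of $\ctaup$.

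The delicate point — and the step I expect to be the main obstacle — is the interface at $s_{j_0}$: one must rule out that a barred entry lying below row $s_{j_0}$ could be selected as $c_{s_{j_0+1}}$ and thereby make the outcome depend on $\ctaup$. This is precisely where Lemma~\ref{cruciallemmaforskewshape} is used: it guarantees that every barred entry below row $s_{j_0}$ carries a barred column-$(i+1)$ neighbour, so that, relative to the $A$-valued threshold $d_{s_{j_0}}$ and the dichotomy supplied by Lemma~\ref{precruciallemma2}, such a barred entry behaves exactly like an inner-shape box and never competes with the genuine $A$-seed. Once this transparency of the barred entries is secured, both the shape identity and the independence from $\ctaup$ follow by the same bookkeeping as in the straight-shape case.
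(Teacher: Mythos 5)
Your proposal is correct and takes essentially the same route as the paper's own proof: single out the last barred entry $c_{s_{j_0}}$ of the chain, use Lemma~\ref{lemma: innershapechange} (resp.\ Lemma~\ref{cruciallemma}) to locate $c_{s_{j_0}}$ (resp.\ $c_{s_k}$) at a position determined by the shape $\beta$ (resp.\ $\alpha$) alone, and then read off both the resulting shape and the independence from $\ctaup$ directly from Algorithm~\ref{singlecolumnadjustment}. Your write-up merely makes explicit details the paper compresses into ``observe that $s_j$ does not depend on $\bar{\tau}$'' — the box-set computation, the degenerate case $j_0=k$, and the verification that the selection rule below row $s_{j_0}$ runs entirely on $A$-data (your maximality-of-$j_0$ argument for why no barred entry can be selected there is correct and is the real reason, more so than the barred-neighbour statement of Lemma~\ref{cruciallemmaforskewshape} you cite at the end).
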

\begin{proof}
Let $j$ be the greatest positive integer satisfying $1\leq j\leq k$ such that $c_{s_j}\in \tcr{\bar{A}}$. By Lemma \ref{lemma: innershapechange}, we know that $c_{s_j}$ occupies the rightmost box in the bottommost row of length $i$ in the reverse composition diagram of $\beta$. Also, by Lemma \ref{cruciallemma} we know that $c_{s_k}$ occupies the rightmost box in the bottommost row of length $i$ in the reverse composition diagram of $\alpha$ (recall also that we draw the reverse composition diagram of $\beta$ in the bottom left corner of the reverse composition diagram of $\alpha$ when depicting $\alpha\cskew \beta$). Observe now that $s_{j}$ does not depend on $\bar{\tau}$. Thus, from Algorithm \ref{singlecolumnadjustment}, it follows that the $\SSRCT$ in $\phi_{i+1}(\tau)$ formed by the entries that belong to $A$ is independent of $\bar{\tau}$ and that it has shape $\down_{i}(\alpha) \cskew \down_{i}(\beta)$.
\end{proof}
Now we define $\mu_i(\tau_o)$ as follows.
\begin{align*}
\mu_i(\tau_o)= \text{ the $\SSRCT$ formed by considering the entries that belong to $A$ in $\mu_i(\tau)$}.
\end{align*}
It is not clear from the above definition that $\mu_i(\tau_o)$ does not depend on the choice of $\bar{\tau}$. We establish this next.

Let $T_o=\rho_{\beta}(\ctau_o)$ and $\bar{T}=\rho(\ctaup)$. Define $T$ to be $\rho(\tau)$. Consider a backward jeu de taquin slide on $T$ starting from an addable node in column $i$. Let $j$ be the greatest integer satisfying $1\leq j\leq i-1$ such that entry that moves horizontally from column $j$ to column $j+1$ when computing $\jdt_{i}(T)$ is an element of $\tcr{\bar{A}}$. If there is no such $j$, define $j$ to be $0$. If $j\geq 1$, then the entries moving horizontally when computing $\jdt_{i}(T)$ from all columns between 1 and $j$ (inclusive) are all elements of $\tcr{\bar{A}}$, because the entries moving horizontally during a backward jeu de taquin slide increase weakly as we move from column $i-1$ to column $1$. Now by using Theorem \ref{commutativediagramforPhi}, we get that if we compute $\phi_{2}\circ \cdots \circ\phi_{i}(\tau)$, all the entries that exit from columns between $1$ and $j$ belong to $\tcr{\bar{A}}$.

The preceding discussion along with Lemmas \ref{lemma: exiting entry from A} and \ref{lemma: exiting entry from Abar} implies the following proposition.
\begin{Proposition}\label{innershapechange2}
Let $\gamma$ be the $\SSRCT$ $\Phi_{i}(\ctau)$ where $i\geq 1$. Then the $\SSRCT$ formed by considering the entries in $\gamma$ that belong to $\tcr{\bar{A}}$ has shape $\downrow_{[j]}(\beta)$. The $\SSRCT$ formed by considering the entries in $\gamma$ that belong to $A$ has shape $\downrow_{[i-1]}(\alpha)\cskew \downrow_{[j]}(\beta)$. Furthermore, this $\SSRCT$ is independent of the choice of $\bar{\tau}$.
\end{Proposition}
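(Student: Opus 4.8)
The plan is to run an induction along the factorisation $\Phi_i=\phi_2\circ\phi_3\circ\cdots\circ\phi_i$, applying the single-slide Lemmas \ref{lemma: innershapechange}, \ref{lemma: exiting entry from A} and \ref{lemma: exiting entry from Abar} one factor at a time. Set $\sigma_i=\tau$ and $\sigma_{m-1}=\phi_m(\sigma_m)$ for $m=i,i-1,\ldots,2$, so that $\sigma_1=\Phi_i(\tau)=\gamma$. The entry exiting at the step $\phi_m$ is $f_m(\rho(\sigma_m))$, and by Theorem \ref{commutativediagramforPhi} together with the lemma expressing $H_m(T)$ as $H_{m-1}(\tjdt_m(T))\cup\{f_m(T)\}$, these successive exits are exactly the entries of $H_i(T)$ that move horizontally in $\jdt_i(T)$, the exit of $\phi_m$ being the horizontal mover out of column $m-1$. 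Since those horizontal movers increase weakly as the column index drops from $i-1$ to $1$, and every letter of $\tcr{\bar{A}}$ exceeds every letter of $A$, the exit of $\phi_m$ lies in $\tcr{\bar{A}}$ exactly when $m-1\leq j$; this is the content of the discussion preceding the statement.

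First I would track, stage by stage, the shape of the sub-$\SSRCT$ of $\tcr{\bar{A}}$-entries and that of the sub-$\SSRCT$ of $A$-entries, starting from $\beta$ and $\alpha\cskew\beta$ respectively. For $m\in\{j+2,\ldots,i\}$ the exit is unbarred, so Lemma \ref{lemma: exiting entry from A} applies: Algorithm \ref{singlecolumnadjustment} moves only $A$-entries, hence the $\tcr{\bar{A}}$-part is left unchanged while the $A$-part has its outer shape hit by $\down_{m-1}$ and its inner shape unchanged. After these factors the $\tcr{\bar{A}}$-part is still $\beta$ and the $A$-part has shape $\down_{j+1}\cdots\down_{i-1}(\alpha)\cskew\beta$. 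For $m\in\{2,\ldots,j+1\}$ the exit is barred, so Lemmas \ref{lemma: innershapechange} and \ref{lemma: exiting entry from Abar} apply: the $\tcr{\bar{A}}$-part has its shape hit by $\down_{m-1}$, and the $A$-part has both its outer and its inner shape hit by $\down_{m-1}$. Composing over $m=j+1$ down to $2$ gives the $\tcr{\bar{A}}$-part shape $\down_1\cdots\down_j(\beta)=\downrow_{[j]}(\beta)$ and the $A$-part shape $\down_1\cdots\down_{i-1}(\alpha)\cskew\down_1\cdots\down_j(\beta)=\downrow_{[i-1]}(\alpha)\cskew\downrow_{[j]}(\beta)$, which are the two shape assertions.

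The last point is the independence of $\bar\tau$, and this is where the main obstacle lies. Each of Lemmas \ref{lemma: exiting entry from A} and \ref{lemma: exiting entry from Abar} already gives that the $A$-sub-$\SSRCT$ produced by a single $\phi$ is independent of $\bar\tau$, so feeding these through the iteration yields the claim as soon as one knows that the pattern of barred versus unbarred exits, equivalently the integer $j$, does not depend on $\bar\tau$. A priori $j$ is read off from $T=\rho(\tau)$, which does depend on $\bar\tau$. To settle this I would use that the barred entries of $T$ are precisely $\bar T=\rho(\ctaup)$, occupying the bottom (largest) cells of the columns of $T$ and forming the fixed shape $\widetilde{\beta}$ regardless of the filling $\bar\tau$, while the unbarred entries form $T_o=\rho_\beta(\ctau_o)$ of shape $\widetilde{\alpha}/\widetilde{\beta}$. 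Because any barred letter beats any unbarred letter, whether the horizontal mover out of a given column lands in the barred region is governed only by these shapes and by the $A$-versus-$\tcr{\bar{A}}$ comparisons, not by the particular barred values; hence the crossing index $j$ is the same for every $\bar\tau$. Combined with the per-step independence this shows the $A$-sub-$\SSRCT$ of $\gamma$ is independent of $\bar\tau$, completing the proof.
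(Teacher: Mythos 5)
Your proof is correct and takes essentially the same route as the paper: factor $\Phi_i$ into the single slides $\phi_m$, identify their exits with the horizontal movers of $\jdt_i$ applied to $T=\rho(\tau)$ via Theorem \ref{commutativediagramforPhi}, use the weak monotonicity of those movers to split the factors into an unbarred block ($j+2\leq m\leq i$) and a barred block ($2\leq m\leq j+1$), and compose Lemmas \ref{lemma: innershapechange}, \ref{lemma: exiting entry from A} and \ref{lemma: exiting entry from Abar} to obtain both shape statements. The one place you go beyond the paper --- explicitly checking that the crossing index $j$ does not depend on $\bar{\tau}$, which the paper leaves implicit --- is handled soundly, since every comparison the slide makes is either internal to $A$ (determined by $\tau_o$), an $A$-versus-$\bar{A}$ comparison (always decided the same way), or barred-versus-barred, and comparisons of the last kind arise only once the sliding path has entered the barred region, where every subsequent mover is barred regardless of the particular filling $\bar{\tau}$.
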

This proposition along with our description of the procedure $\mu$ prior to Theorem \ref{backwardjdtslideonssrct}, implies the following theorem.
\begin{Theorem}\label{theorem: backward slides for skew shape}
Let $\gamma$ be the $\SSRCT$ $\mu_{i}(\ctau)$ where $i\geq 1$. Then the $\SSRCT$ formed by considering the entries in $\gamma$ that belong to $\tcr{\bar{A}}$ has shape $\up_{j+1}(\beta)\cskew(1)$. The $\SSRCT$ formed by considering the entries in $\gamma$ that belong to $A$ has shape $\up_{i}(\alpha)\cskew\up_{j+1}(\beta)$, and is independent of the choice of $\bar{\tau}$.
\end{Theorem}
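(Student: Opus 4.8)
The plan is to deduce the theorem directly from Proposition \ref{innershapechange2} together with the explicit description of how $\mu_i$ augments $\Phi_i(\ctau)$, so the whole argument reduces to bookkeeping of where the $A$-entries and $\tcr{\bar{A}}$-entries land once the extra row is attached. Recall that $\mu_i(\ctau)$ is built from $\Phi_i(\ctau)$ by appending a new bottommost row, realizing the shape $\up_i(\alpha)\cskew(1)$ of Theorem \ref{shapemu}, whose inner box $(1)$ is the leftmost cell of that row, and then filling the remaining $i-1$ cells with the multiset $H_i(T)$ of horizontally moving entries written in decreasing order from left to right. Here $T=\rho(\ctau)$.

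First I would record how $H_i(T)$ splits between the two alphabets. By the discussion preceding Proposition \ref{innershapechange2}, the entries that move horizontally out of columns $1,\ldots,j$ while computing $\jdt_i(T)$ are precisely the elements of $H_i(T)$ lying in $\tcr{\bar{A}}$, and there are exactly $j$ of them; the remaining $i-1-j$ exiting entries lie in $A$. Since every letter of $\tcr{\bar{A}}$ exceeds every letter of $A$, writing $H_i(T)$ in decreasing order puts the $j$ barred entries in the leftmost filled cells and the $i-1-j$ unbarred entries in the rightmost cells. Thus the appended row, read left to right, consists of one inner cell, then $j$ cells carrying $\tcr{\bar{A}}$-entries, then $i-1-j$ cells carrying $A$-entries.

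Next I would assemble the two subtableaux. For the barred part, Proposition \ref{innershapechange2} says the $\tcr{\bar{A}}$-entries of $\Phi_i(\ctau)$ occupy the straight shape $\downrow_{[j]}(\beta)$; attaching the new row contributes the inner box $(1)$ together with the $j$ barred cells immediately to its right, i.e. a fresh bottom row of length $j+1$. Hence the barred outer shape is $a_{j+1}\downrow_{[j]}(\beta)=\up_{j+1}(\beta)$ with inner shape $(1)$, giving $\up_{j+1}(\beta)\cskew(1)$. For the unbarred part, the same proposition gives that the $A$-entries of $\Phi_i(\ctau)$ occupy $\downrow_{[i-1]}(\alpha)\cskew\downrow_{[j]}(\beta)$; attaching the row adds a full outer row of length $i$ below $\downrow_{[i-1]}(\alpha)$ and, from the $A$-viewpoint, an inner portion of length $j+1$ (the inner box plus the $j$ barred cells) below $\downrow_{[j]}(\beta)$. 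Using $\up_i=a_i\downrow_{[i-1]}$ and $\up_{j+1}=a_{j+1}\downrow_{[j]}$, the $A$-part has outer shape $\up_i(\alpha)$ and inner shape $\up_{j+1}(\beta)$, namely $\up_i(\alpha)\cskew\up_{j+1}(\beta)$. Its independence from the choice of $\ctaup$ is inherited from the corresponding assertion in Proposition \ref{innershapechange2}, once one notes that the exiting $A$-entries are themselves governed by the $A$-part alone via Lemmas \ref{lemma: exiting entry from A} and \ref{lemma: exiting entry from Abar}.

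The only genuinely delicate point is the clean left-to-right segregation of the appended row into a barred block followed by an unbarred block, which rests on the weak monotonicity of the horizontally moving entries along the slide (entries picked up from earlier columns are weakly larger) combined with the total order placing $\tcr{\bar{A}}$ above $A$. Everything after that is a direct translation of the $a$- and $\downrow$-operator definitions into the geometry of appending one row, so I would present the monotonicity and segregation step carefully and treat the shape reassembly as routine.
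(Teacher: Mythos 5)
Your proposal is correct and follows essentially the same route as the paper, whose entire proof is the one-line observation that Proposition \ref{innershapechange2} together with the description of $\mu_i$ (appending a row of length $i$ filled with $H_i(T)$ in decreasing order) implies the theorem. Your write-up simply makes explicit the bookkeeping the paper leaves implicit: the split of $H_i(T)$ into $j$ barred and $i-1-j$ unbarred entries, the left-to-right segregation forced by the total order on $A\cup\tcr{\bar{A}}$, and the identification of the resulting shapes via $\up_i=a_i\downrow_{[i-1]}$ and $\up_{j+1}=a_{j+1}\downrow_{[j]}$.
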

The theorem above proves that $\mu_{i}(\tau_o)$ is well-defined and does not depend on the choice of filling $\bar{\tau}$. From this point on, we will not require the alphabet $\tcr{\bar{A}}$.

\section{A new poset on compositions}\label{section: new poset}
In this section we will introduce a new poset structure on the set of compositions that arises from performing backward jeu de taquin slides on $\SSRCT$s. But before that we need to understand the link between maximal chains in Young's lattice $\mathcal{Y}$ and $\SRT$s. 

\subsection{Growth words and $\SRT$s}
With any maximal chain in $\mathcal{Y}$ starting at the empty partition, we can associate a sequence of positive integers as explained next. Let $$\lambda_0=\varnothing \prec \lambda_1\prec \cdots \prec \lambda_n$$ be a maximal chain. Then each partition $\lambda_{k}$ for $1\leq k\leq n$ is obtained by adding a box at an addable node of $\lambda_{k-1}$, say, in column $i_k$. The sequence $i_n\cdots i_1$ is called the \textit{column growth word} corresponding to this maximal chain. Note that the column growth word is always \textit{reverse lattice}, that is, in any suffix of the column growth word the number of $j$s weakly exceeds the number of $j+1$s for all positive integers $j$. What we have outlined is clearly a bijection between reverse lattice words of length $n$ and maximal chains in $\mathcal{Y}$ starting at $\varnothing$ and ending at a partition of $n$. We will now associate an $\SRT$ of size $n$ given a column growth word $w=i_n\cdots i_1$. This is done by considering the corresponding maximal chain and assigning $n-k+1$ to the box in $\lambda_{k}$ that is not in $\lambda_{k-1}$. This $\SRT$ will be denoted by $T_w$.

Before we state our next lemma connecting the column reading word of $T_w$ to its associated reverse lattice word, we require the notion of standardization. Given a word $w=i_n\cdots i_1$ where the $i_j$s are positive integers, define the \textit{inversion set} as follows.
\begin{align*}
\inv  (w)=\{(p,q): i_p<i_q\}
\end{align*}
There exists a unique permutation in $\sigma\in\mathfrak{S}_n$ such that $\inv (\sigma)=\inv (w)$. We call $\sigma$ the \textit{standardization} of $w$, and denote it by $\std(w)$.

\begin{Lemma}\label{lemma: reading word is inverse of standardized growth word}
Let $w=i_n\cdots i_1$ be a reverse lattice word, and let $\sigma = \std(w)$. Then the column reading word of $T_{w}$ equals $\sigma^{-1}$.
\end{Lemma}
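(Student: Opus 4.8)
The plan is to reduce the statement to a single observation about which column each entry of $T_w$ occupies, after which both the column reading word and $\sigma^{-1}$ collapse to the same explicit counting function, making the identity a direct count.

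First I would pin down the correspondence between the entries of $T_w$ and the letters of $w$. Reading $w = i_n\cdots i_1$ from left to right, write $w^{(m)}=i_{n-m+1}$ for its $m$-th letter. By construction of $T_w$, the box created at the $k$-th step of the chain carries the entry $n-k+1$ and lies in column $i_k$; setting $m=n-k+1$, this says precisely that the box of $T_w$ carrying the entry $m$ lies in column $w^{(m)}$. Thus, read from the left, the $m$-th letter of $w$ records the column of the entry $m$ in $T_w$; in particular the set of entries occupying column $c$ is exactly $\{m: w^{(m)}=c\}$. (The reverse lattice hypothesis is exactly what makes the chain, hence $T_w$, well defined; beyond this the identity is purely formal and uses no further structure of $T_w$.)

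Next I would compute, for each value $v\in[n]$, its position in the column reading word of $T_w$, i.e. $\pi^{-1}(v)$, where $\pi$ denotes the reading word viewed as a permutation. Since the reading word lists the columns from left to right and reads each column in increasing order, the position of $v$ equals the number of boxes lying in columns strictly to the left of $v$'s column, plus the number of entries in $v$'s own column that are $\le v$. By the correspondence above these two counts are
$$ \#\{m\in[n]: w^{(m)} < w^{(v)}\} \quad\text{and}\quad \#\{m\le v: w^{(m)}=w^{(v)}\}, $$
respectively, so their sum is exactly the rank of the letter $w^{(v)}$ among the letters of $w$, ties being broken in favour of earlier positions.

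Finally I would match this with $\sigma=\std(w)$. By definition $\sigma$ is the permutation with $\inv(\sigma)=\inv(w)$, which forces $\sigma(v)$ to equal the rank of $w^{(v)}$ with precisely the same position-based tie-break: two equal letters are left uninverted by $w$, hence by $\sigma$, so the earlier one receives the smaller value. Therefore $\pi^{-1}(v)=\sigma(v)$ for all $v$, that is $\pi^{-1}=\sigma$, which is the assertion $\pi=\sigma^{-1}$. I expect the only delicate point to be keeping the reversed indexing straight — the growth word is written opposite to the order in which the chain is built, and the entries are $n-k+1$ rather than $k$ — together with checking that reading a column in increasing order realizes exactly the ``earlier position, smaller value'' tie-break used in standardization; once these conventions are aligned, the equality is immediate from the entry–column correspondence.
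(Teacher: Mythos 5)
Your proof is correct and takes essentially the same approach as the paper: the paper's one-line argument is precisely your entry--column correspondence (the entries of column $j$ of $T_w$ occupy the positions of the letter $j$ in $w$, read in increasing order), with the rank computation and the standardization tie-break left implicit. Your write-up simply makes those implicit steps explicit.
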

\begin{proof}
Notice that all the instances of a positive integer $j$ in $w$ are in positions given by the entries in the $j$-th column of $T_w$ read in increasing order. This implies the claim.
\end{proof}
\begin{Example}
Consider the reverse lattice word $w=\tc{mygreen}{3}4\tcr{11}\tcb{2}\tc{mygreen}{3}\tcr{1}\tcb{2}\tcr{1}$. Then we have that $\sigma=\std(w)=\tc{mygreen}{7}9\tcr{1}\tcr{2}\tcb{5}\tc{mygreen}{8}\tcr{3}\tcb{6}\tcr{4}$ in single line notation. Notice that $T_w$ is the $\SSRT$ below.
\begin{eqnarray*}
\ytableausetup{mathmode, boxsize=1.3em}
\begin{ytableau}
\tcr{3}\\ \tcr{4}\\
\tcr{7}&\tcb{5} & \tc{mygreen}{1}\\
\tcr{9} & \tcb{8} &\tc{mygreen}{6} & 2
\end{ytableau}
\end{eqnarray*}
The column reading word of $T_w$ is the following permutation.
\begin{eqnarray*}
\tcr{3479}\tcb{58}\tc{mygreen}{16}2
\end{eqnarray*}
It can be checked that the permutation above is precisely $\sigma^{-1}$.
\end{Example}
Now recall that if we perform our variant of the Robinson-Schensted algorithm on the column reading word of a tableau $T$, we recovers $T$ as the insertion tableau. This implies the following corollary to the previous lemma, establishing $T_w$ as a insertion tableau for permutation associated naturally to it.
\begin{Corollary}\label{corollary: insertion tableau and reading word}
Let $w=i_n\cdots i_1$ be a reverse lattice word, and let $\sigma=\std(w)$. Then $T_w=P(\sigma^{-1})$.
\end{Corollary}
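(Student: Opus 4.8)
The plan is to combine Lemma \ref{lemma: reading word is inverse of standardized growth word} with the structural property of the insertion algorithm recalled immediately above the corollary, namely that feeding the column reading word of any $\SRT$ $T$ back into the variant of the Robinson--Schensted algorithm returns $T$ itself as the insertion tableau. In symbols, $P(w_T)=T$ for every $\SRT$ $T$, where $w_T$ denotes the column reading word of $T$.

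First I would apply Lemma \ref{lemma: reading word is inverse of standardized growth word} to the tableau $T_w$ to record that its column reading word $w_{T_w}$ equals $\sigma^{-1}$, where $\sigma=\std(w)$. Next I would invoke the recovery property $P(w_T)=T$ in the special case $T=T_w$, which reads $P(w_{T_w})=T_w$. Substituting $w_{T_w}=\sigma^{-1}$ into this identity yields $P(\sigma^{-1})=T_w$, which is exactly the assertion. This is essentially a one-line deduction once the two ingredients are in place.

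The only point that deserves care, and the nearest thing to an obstacle, is the legitimacy of the recovery property $P(w_T)=T$ for the particular variant of Robinson--Schensted used here (insertion into reverse tableaux rather than ordinary tableaux), since it is stated but not reproved in the excerpt. I would justify it by the standard observation that inserting the entries of the $j$-th column in increasing order onto the insertion tableau already assembled from columns $1,\ldots,j-1$ deposits precisely those entries into the $j$-th column in their original positions, so that no bump ever disturbs an earlier column; iterating over the columns from left to right then reconstructs $T$ verbatim. As this property is explicitly assumed in the surrounding text, the corollary follows immediately, and I anticipate no genuine difficulty beyond this bookkeeping.
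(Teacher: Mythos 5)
Your proposal is correct and follows exactly the paper's argument: the paper likewise deduces the corollary by combining Lemma \ref{lemma: reading word is inverse of standardized growth word} with the recalled fact that inserting the column reading word of an \SRT{} recovers that tableau as the insertion tableau. Your additional sketch justifying the recovery property (columns insert without bumping into earlier columns) is a fine piece of bookkeeping that the paper simply takes for granted.
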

This interepretation of $T_w$ will be of importance to us in the next subsection.

\subsection{A poset on compositions}
We will now define a new poset on the set of compositions, denoted by $\mathcal{R}_{c}$, using its cover relation $\lessdot_{r}$ defined by setting $\alpha \lessdot _{r} \beta$ if and only if $\beta=\up_i(\alpha)$ for some $i\geq 1$. The order relation $<_{r}$ in $\mathcal{R}_{c}$ is obtained by taking the transitive closure of the cover relation $\lessdot_{r}$.

\begin{Example}\label{example: cover relation right Pieri poset}
Let $\alpha= (3,1,4,2,1)$ and $\beta=u_{4}(\alpha)=(2,1,4,1,4)$. Then $\alpha \lessdot_{r} \beta$ in $\mathcal{R}_c$.
\end{Example}
The next theorem is about counting maximal chains in $\mathcal{R}_{c}$, and the reader should compare it with the analogous result in the case of Young's lattice $\mathcal{Y}$ \cite[Proposition 7.10.3]{stanley-ec2} and, more recently, in the case of $\mathcal{L}_c$ \cite[Proposition 2.11]{BLvW}.
\begin{Theorem}\label{theorem: counting maximal chains in right pieri poset}
The number of maximal chains from $\varnothing$ to a composition $\alpha$ in $\mathcal{R}_{c}$ is equal to the number of $\SRCT$s of shape $\alpha$. 
\end{Theorem}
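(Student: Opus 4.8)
The plan is to produce an explicit bijection between the maximal chains from $\varnothing$ to $\alpha$ in $\mathcal{R}_{c}$ and the set $\SRCT(\alpha)$; the enumeration is then immediate. First I would observe that such a maximal chain $\varnothing=\alpha^{(0)}\lessdot_{r}\cdots\lessdot_{r}\alpha^{(n)}=\alpha$ is exactly the data of a sequence $(i_1,\ldots,i_n)$ with $\alpha^{(k)}=\up_{i_k}(\alpha^{(k-1)})$ and $n=|\alpha|$. Applying $\widetilde{\,\cdot\,}$ and invoking Remark \ref{remark: up operator on partition}, this yields a saturated chain $\varnothing\prec\widetilde{\alpha^{(1)}}\prec\cdots\prec\widetilde{\alpha^{(n)}}=\widetilde{\alpha}$ in Young's lattice whose column growth word is the reverse lattice word $w=i_n\cdots i_1$. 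Since the passage from maximal chains in $\mathcal{Y}$ to column growth words is a bijection and each $w$ determines the $\SRT$ $T_w$, the maximal chains $\varnothing\to\alpha$ correspond bijectively to the reverse lattice words $w$ of length $n$ satisfying $\up_{i_n}\cdots\up_{i_1}(\varnothing)=\alpha$.

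Next I would assemble the target bijection. Both $w\mapsto T_w$ (reverse lattice words of length $n$ onto $\SRT$s of size $n$) and $\rho^{-1}$ (onto $\SRCT$s) are bijections, and evacuation $e$ (Algorithm \ref{algorithm:evacuation}) is a shape-preserving involution on $\SRT$s of a fixed shape. I would therefore send the chain with word $w$ to the $\SRCT$ $\rho^{-1}(e(T_w))$. As a composite of bijections this maps reverse lattice words of length $n$ bijectively onto $\bigsqcup_{\widetilde{\beta}=\widetilde{\alpha}}\SRCT(\beta)$, so the theorem reduces to the single shape identity $\sha\!\left(\rho^{-1}(e(T_w))\right)=\up_{i_n}\cdots\up_{i_1}(\varnothing)$. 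The insertion of $e$ is genuinely necessary: a direct check already on $\widetilde{\alpha}=(2,1)$ shows that $\sha(\rho^{-1}(T_w))$ is the \emph{reverse} of the chain's endpoint, whereas $e$ interchanges the two relevant $\SRT$s and repairs this. Combining Corollary \ref{corollary: insertion tableau and reading word} with Lemma \ref{lemma: basic properties of RSK} gives the clean reformulation $e(T_w)=Q(\std(w))$, the recording tableau of the standardized growth word, which is the form I would actually manipulate.

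The heart of the matter is the shape identity above, which I would prove by induction on $n$ by peeling off the final cover $\alpha=\up_{i_n}(\alpha^{(n-1)})$. At the $\SRT$ level this deletes the corner carrying the smallest entry, which lies in column $i_n$ of $\widetilde{\alpha}$, and the content is to show that, after evacuation and passage through $\rho^{-1}$, this corner deletion is realized on the composition shape by $\up_{i_n}$. Here I would use that a backward jeu de taquin slide $\mu_i$ realizes $\up_i$ on $\SSRCT$ shapes (Theorems \ref{backwardjdtslideonssrct} and \ref{shapemu}), together with the compatibility of evacuation and rectification with jeu de taquin, to transport the column-$i_n$ box addition in $\mathcal{Y}$ to the append-and-remove description $\up_i=a_i\downrow_{[i-1]}$. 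The main obstacle is precisely this transport: $\up_i$ does not add a single box but appends a new bottom part of length $i$ after the cascade of removals $\downrow_{[i-1]}$, while the growth-word picture only ever adds one box. Matching the two forces me to track, through $\rho^{-1}$ and $e$, how the bottom rows of the reverse composition diagram are reorganized — exactly the bookkeeping controlled by the lemmas of Section \ref{section: backward jdt slides straight shape}, notably Lemma \ref{cruciallemma} — and to verify that the order in which $\downrow_{[i-1]}$ strips boxes agrees with the order in which the slide visits those rows. Once this lemma is established the theorem follows at once. (If one is content to invoke the right Pieri rule of Theorem \ref{theorem: right pieri rules}, an alternative short-circuit is purely algebraic: the $n$-fold product expands as $\ncs_{(1)}^{n}=\nch_{(1^n)}=\sum_{\beta\vDash n}\rib_{\beta}$, whose $\ncs_\alpha$-coefficient is $\sum_{\beta}d_{\alpha\beta}=\#\SRCT(\alpha)$, while iterating $\ncs_{\gamma}\cdot\ncs_{(1)}=\sum_{i}\ncs_{\up_i(\gamma)}$ shows that same coefficient counts the maximal chains; I nonetheless prefer the self-contained bijection since it does not presuppose the Pieri rule.)
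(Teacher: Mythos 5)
Your overall route coincides with the paper's: maximal chains from $\varnothing$ to $\alpha$ in $\mathcal{R}_c$ are identified with reverse lattice words $w=i_n\cdots i_1$ satisfying $\up_{i_n}\cdots\up_{i_1}(\varnothing)=\alpha$, and the map you propose, $w\mapsto \rho^{-1}(e(T_w))=\rho^{-1}(Q(\std(w)))$, is exactly the paper's map (there it is defined as $\mu_{i_n}\cdots\mu_{i_1}(\varnothing)$, and Theorem \ref{theorem: jdt sequences and recording tableau} identifies this with $\rho^{-1}(Q(\std(w)))$). The two write-ups trade which half is free: the paper's definition through the $\mu$'s makes the shape statement automatic via Theorem \ref{shapemu} and leaves bijectivity to be proved by inverting the slides through evacuation, while your composite-of-bijections definition makes bijectivity automatic and leaves the shape statement to be proved. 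The genuine gap is that you never prove that shape statement, $\sha\bigl(\rho^{-1}(e(T_w))\bigr)=\up_{i_n}\cdots\up_{i_1}(\varnothing)$: you correctly call it the heart of the matter, sketch an induction, and end with ``once this lemma is established the theorem follows at once.'' That lemma is precisely the content of Theorem \ref{theorem: jdt sequences and recording tableau} combined with Theorem \ref{shapemu}; since Theorem \ref{theorem: jdt sequences and recording tableau} is the machinery the paper develops expressly for this proof, it cannot be quoted as known, and your proposal supplies no substitute argument for it.

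Moreover, the sketch aims at the wrong difficulty. The point you say must be verified --- that the order in which $\downrow_{[i-1]}$ strips boxes matches the order in which the slide visits rows --- is already settled by Lemma \ref{shapephi}, Corollary \ref{shapePhi} and Theorem \ref{shapemu}, which you cite. The actual obstacle in your induction is relating $e(T_w)$ to $e(T_{w'})$ for $w'=i_{n-1}\cdots i_1$: peeling the final cover deletes the \emph{minimum} entry of $T_w$, i.e.\ passes to a suffix of the insertion word, and recording tableaux do not restrict along suffixes. Concretely, for $w=341123121$ of Example \ref{example: shape given column growth word}, the last cover removes a box in column $3$, but the entry $1$ of $Q(\std(w))$ sits in column $4$, so no corner deletion performed on $Q(\std(w))$ models the peeling. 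The missing fact is the evacuation duality implicit in Algorithm \ref{algorithm:evacuation}: for a straight-shape $\SRT$ $U$ of size $n$, the tableau $e(U)$ with its entry $1$ deleted and all entries decreased by $1$ equals $e\bigl(\jdt^{f}_{1}(V)\bigr)$, where $V$ is $U$ with its maximum (at position $(1,1)$) removed. Applying this with $U=e(T_w)$, whose evacuation is $T_w$ by involutivity, gives $e(T_{w'})=\jdt^{f}_{1}\bigl(V'\bigr)$ with $V'$ equal to $e(T_w)$ minus its maximum; inverting the forward slide, $e(T_w)=\jdt_{i_n}\bigl(e(T_{w'})\bigr)$ with the vacated corner refilled by $n$, the column $i_n$ being forced by comparing $\sha(T_w)=\widetilde{\alpha}$ with $\sha(T_{w'})$. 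Only at this point do Theorems \ref{backwardjdtslideonssrct} and \ref{shapemu} yield $\sha\bigl(\rho^{-1}(e(T_w))\bigr)=\up_{i_n}\bigl(\sha(\rho^{-1}(e(T_{w'})))\bigr)$ and close the induction --- and this is, reorganized, the paper's own proof of Theorem \ref{theorem: jdt sequences and recording tableau}. Finally, your algebraic fallback via $\ncs_{(1)}^{n}=\sum_{\beta\vDash n}\rib_{\beta}$ is sound as algebra, but Theorem \ref{theorem: right pieri rules} is proved only in Section \ref{section: right pieri rule}, using this same slide machinery, so within this paper it cannot serve as an independent proof of the present theorem; you were right to set it aside.
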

To prove the above theorem, we will use our variant of the Robinson-Schensted algorithm defined in Subsection \ref{subsection: vrsk} and answer a more general question. We will start by proving a result that allows us to compute the resulting inner shape after a sequence of backward jeu de taquin slides has been applied to an $\SSRCT$ of straight shape.

Let $\tau$ be an $\SSRCT$ of shape $\alpha$. Consider the computation of $\mu_{i_n}\cdots\mu_{i_1}(\tau)$, where we assume that the sequence of backward jeu de taquin slides we are executing is valid at all steps. This means that for all $1\leq k\leq n-1$, the outer shape $\beta$ underlying the $\SSRCT$ $\mu_{i_k}\cdots \mu_{i_1}(\tau)$ is such that $\widetilde{\beta}$ has an addable node in column $i_{k+1}$. Furthermore, we also assume that $\widetilde{\alpha}$ has an addable node in column $i_1$, so that we can compute $\mu_{i_1}(\tau)$ to start with.

We will deviate a little from our earlier description of backward jeu de taquin slides for an $\SSRCT$ of straight shape. Let $max(\tau)$ denote the maximum entry in the $\SSRCT$ $\tau$ (if $\tau=\varnothing$ then this is $0$). We will assume that in computing $\mu_{i}(\tau)$, instead of getting a $\SSRCT$ of skew reverse composition shape, with the vacant lower left corner filled by a bullet $\bullet$, we insert the number $max(\tau)+1$ in the corner left vacant. In this way we ensure that $\mu_i(\tau)$ is also an $\SSRCT$ of straight shape. We will do backward jeu de taquin slides (the procedure $\jdt$) on $\SSRT$s of straight shape with a similar rule.
\begin{Example}
Consider the $\SRCT$ $\tau=$
\ytableausetup{mathmode,boxsize=1em}
\begin{ytableau}
5&2\\8&7&4&1\\9&6&3\\11&10
\end{ytableau}
of shape $(2,4,3,2)$. We will compute $\mu_{4}\mu_{3}\mu_{4}(\tau)$ under the set of rules outlined prior to the example. Firstly, $max(\tau)=11$. We obtain the following $\SSRCT$s sequentially (where the entries that come in the lower left corner are highlighted).
$$\ytableausetup{mathmode,boxsize=1.3em}
\begin{ytableau}
5&2\\8&7&3&1\\9&6\\\tcr{12} & 11 &  10 & 4
\end{ytableau}\longrightarrow
\begin{ytableau}
5&2\\8&6&3&1\\ \tcr{12} &11 &10 &4\\ \tcr{13} & 9 &7
\end{ytableau}\longrightarrow
\begin{ytableau}
5&2\\8&6&3&1\\\tcr{12} & 9 & 7 & 4\\\tcr{14} &\tcr{13} & 11 & 10
\end{ytableau}$$
\end{Example}
We will next state a theorem that allows us to compute the final $\SRCT$ (under our new setup) obtained when a sequence of slides is applied to an empty $\SRCT$ $\varnothing$.
\begin{Theorem}\label{theorem: jdt sequences and recording tableau}
Let $w=i_n\cdots i_1$ be a reverse lattice word, and let $\sigma=\std(w)$. Then 
\begin{align*}
\mu_{i_n}\cdots \mu_{i_1}(\varnothing)=\rho^{-1}(Q(\sigma)).
\end{align*}
\end{Theorem}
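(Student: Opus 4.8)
The plan is to transport the entire computation from $\SSRCT$s to $\SSRT$s using the commutative square of Theorem \ref{backwardjdtslideonssrct}, and then to identify the resulting $\SSRT$ with $Q(\sigma)$ by way of evacuation. Since in the present setup every slide fills the vacated inner corner with the current maximum, both $\mu_i$ and $\jdt_i$ now preserve straight shapes, so I would first record that Theorem \ref{backwardjdtslideonssrct} persists in the form $\rho^{-1}\circ \jdt_i = \mu_i\circ\rho^{-1}$: the only new ingredient is that the filled entry is the global maximum, hence occupies the bottom box of the first column in both the $\SSRT$ and its image, and $\rho^{-1}$ acts as the identity on the first column. As $\rho^{-1}(\varnothing)=\varnothing$, iterating the square gives
\[
\mu_{i_n}\cdots\mu_{i_1}(\varnothing)=\rho^{-1}\bigl(\jdt_{i_n}\cdots\jdt_{i_1}(\varnothing)\bigr),
\]
so it suffices to prove $\jdt_{i_n}\cdots\jdt_{i_1}(\varnothing)=Q(\sigma)$.

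Write $S_k=\jdt_{i_k}\cdots\jdt_{i_1}(\varnothing)$. The next step is to describe $S_k$ combinatorially. Because a backward slide ending at $(1,1)$ followed by inserting $\max+1$ there adds a single box in column $i_k$ and places the new global maximum at the corner, an easy induction shows that $S_k$ is a standard reverse tableau on entries $1,\dots,k$ whose shape is exactly the $k$-th term $\lambda_k$ of the chain $\varnothing=\lambda_0\prec\lambda_1\prec\cdots$ attached to the column growth word $w=i_n\cdots i_1$, and with $S_k(1,1)=k$. In particular $S_n$ and $T_w$ share the shape $\lambda_n$.

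The heart of the argument is the identity $e(S_n)=T_w$, which I would obtain by running the evacuation algorithm (Algorithm \ref{algorithm:evacuation}) on $S_n$ and checking that it simply runs the build-up backwards. At the first stage the corner entry is the maximum $n$; removing it and forward-rectifying is, by the fact that forward and backward jeu de taquin slides are mutually inverse, exactly the inverse of the slide $\jdt_{i_n}$ that produced $S_n$ from $S_{n-1}$. Hence the freed removable node is $\lambda_n\setminus\lambda_{n-1}$ (the box in column $i_n$), the recorded value is $n-n+1=1$, and the current tableau becomes $S_{n-1}$. Iterating, at stage $k$ the current tableau is $S_{n-k+1}$, the freed node is $\lambda_{n-k+1}\setminus\lambda_{n-k}$, and the recorded entry is $n-(n-k+1)+1=k$. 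Thus $e(S_n)$ carries the value $k$ in the box $\lambda_{n-k+1}\setminus\lambda_{n-k}$; comparing with the definition of $T_w$, whose box $\lambda_m\setminus\lambda_{m-1}$ holds $n-m+1$, and setting $m=n-k+1$ shows the two fillings agree, so $e(S_n)=T_w$ and therefore $S_n=e(T_w)$ since $e$ is an involution.

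Finally I would invoke the Robinson--Schensted facts to rewrite $e(T_w)$: Corollary \ref{corollary: insertion tableau and reading word} gives $T_w=P(\sigma^{-1})$, and Lemma \ref{lemma: basic properties of RSK} applied to $\sigma$ gives $P(\sigma^{-1})=e(Q(\sigma))$, so that $T_w=e(Q(\sigma))$ and hence $Q(\sigma)=e(T_w)=S_n$. Combined with the first display this yields $\mu_{i_n}\cdots\mu_{i_1}(\varnothing)=\rho^{-1}(Q(\sigma))$. I expect the main obstacle to be the central claim that evacuation undoes the build-up: one must argue carefully that at each stage the current tableau is genuinely the earlier $S_m$ (so that its corner really holds $m$ and the inverse-slide identity applies), rather than merely having the correct shape; this inductive bookkeeping, together with the precise use of the forward/backward inverse property, is where the care is needed. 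A secondary point to nail down is the compatibility of the corner-filling convention with $\rho^{-1}$ used in the reduction.
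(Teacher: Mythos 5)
Your proposal is correct and takes essentially the same route as the paper's proof: transport the computation to $\SSRT$s via Theorem \ref{backwardjdtslideonssrct}, establish the key identity $e(T)=T_w$ (``evacuation undoes backward jeu de taquin slides''), and then combine Corollary \ref{corollary: insertion tableau and reading word}, Lemma \ref{lemma: basic properties of RSK}, and the involutivity of $e$ to conclude $T=Q(\sigma)$ and hence $\tau=\rho^{-1}(Q(\sigma))$. The only difference is one of detail: you spell out the stage-by-stage inductive bookkeeping behind $e(S_n)=T_w$ and the compatibility of the corner-filling convention with $\rho^{-1}$, both of which the paper asserts in a sentence.
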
 
\begin{proof}
Let $\mu_{i_n}\cdots \mu_{i_1}(\varnothing)=\tau$. Clearly, $\tau$ is an $\SRCT$. Now consider the corresponding sequence of slides starting from the empty $\SRT$, also denoted $\varnothing$, and suppose that the final $\SRT$ obtained is $T$. Thus, $\jdt_{i_n}\cdots \jdt_{i_1}(\varnothing)=T$. From Theorem \ref{backwardjdtslideonssrct}, it follows that $\rho(\tau)=T$ in this new setting. 

Observe that in obtaining $T$, we encounter a sequence of $\SRT$s for $1\leq k \leq n$ as follows.
\begin{align*}T_k=\jdt_{i_k}\cdots \jdt_{i_1}(\varnothing)\end{align*}
If we let $\lambda_{k}$ denote $\sha(T_k)\vdash k$, we get a maximal chain in $\mathcal{Y}$.
\begin{align*}\varnothing \prec \lambda_1\prec \cdots \prec \lambda_n
\end{align*} 
Thus, we have that $w=i_n\cdots i_1$ is the column growth word corresponding to the maximal chain above. This combined with the reversibility of jeu de taquin for $\SSRT$s gives us the key fact that $T_{w}=e(T)$ (essentially as evacuation undoes backward jeu de taquin slides).

From Corollary \ref{corollary: insertion tableau and reading word} we know that $T_{w}=P(\sigma^{-1})$, and by Lemma \ref{lemma: basic properties of RSK}, we know that $P(\sigma^{-1})=e(Q(\sigma))$. Combining the previous two facts, we obtain the following.
\begin{align*}
e(T)= e(Q(\sigma))
\end{align*} 
Since evacuation is an involution as is shown in the appendix, we get that $T=Q(\sigma)$. This implies that the $\SRCT$ given by $\mu_{i_n}\cdots \mu_{i_1}(\varnothing)=\tau=\rho^{-1}(T)$, equals $\rho^{-1}(Q(\sigma))$.
\end{proof}

We claim that the above theorem also supplies us with a bijection between the set of maximal chains from $\varnothing$ to $\alpha$ in $\mathcal{R}_c$ and the set of $\SRCT$s of shape $\alpha$.
\begin{proof}(of Theorem \ref{theorem: counting maximal chains in right pieri poset})
Consider a maximal chain in $\mathcal{R}_c$ as shown below.
\begin{align*}
\varnothing <_r \alpha_1 <_r\cdots <_r\alpha_n=\alpha
\end{align*}
Just as in the case of $\mathcal{Y}$, the maximal chain above gives us a reverse lattice word $w=i_n\cdots i_1$ where 
$\alpha_k= u_{i_k}(\alpha_{k-1})$ for $1\leq k\leq n$. Associate the $\SRCT$ $\mu_{i_n}\cdots \mu_{i_1}(\varnothing)$ with this maximal chain. Note that the shape of this $\SRCT$, by Theorem \ref{shapemu}, is $u_{i_n}\cdots u_{i_1}(\varnothing)$ where $\varnothing$ denotes the empty composition. But this is clearly $\alpha$. To establish that the map associating the $\SRCT$ $\mu_{i_n}\cdots \mu_{i_1}(\varnothing)$ to the reverse lattice word $i_n\cdots i_1$ obtained from the maximal chain is indeed a bijection, we will use the invertibility of jeu de taquin slides for $\SSRT$s.

Consider an $\SRCT$ $\tau$ of shape $\alpha$, and let $T=\rho(\tau)$. We will analyze the execution of the evacuation action on $T$. At every step a forward jeu de taquin slide is initiated from the lower left corner, and this slide finishes in an addable node to the shape underlying the rectified tableau. Noting down the columns of these addable nodes gives us a sequence of $n$ positive integers $i_n$, $i_{n-1}$,\ldots, $i_1$. This sequence is \textbf{uniquely determined by} $T$ (by the invertibility of jeu de taquin slides on $\SSRT$s) and the word $w=i_n\cdots i_1$ is reverse lattice. Now since backward jeu de taquin slides undo the forward jeu de taquin slides, we get that 
\begin{align*}
T=\jdt_{i_n}\cdots \jdt_{i_1}(\varnothing),
\end{align*}
and hence that
\begin{align*}
\tau=\mu_{i_n}\cdots \mu_{i_1}(\varnothing).
\end{align*}
Since $\tau$ had shape $\alpha$ to start with, we get that $u_{i_n}\cdots u_{i_1}(\varnothing)=\alpha$ as well (here again $\varnothing$ refers to the empty composition).
Now we can associate the following (unique) maximal chain from $\varnothing$ to $\alpha$ in $\mathcal{R}_c$ to $\tau$.
\begin{align*}
\varnothing <_r u_{i_1}(\varnothing) <_r u_{i_2}u_{i_1}(\varnothing) <_r \cdots <_r u_{i_n}\cdots u_{i_1}(\varnothing)=\alpha.
\end{align*}
This finishes the proof.
\end{proof}
We will give an example illustrating the ideas above.
\begin{Example}\label{example: shape given column growth word}
To compute $\tau=\mu_3\mu_4\mu_1\mu_1\mu_2\mu_3\mu_1\mu_2\mu_1 (\varnothing)$, note that the column growth word is $341123121$ and its standardization (which is a permutation in $\mathfrak{S}_{9}$) in single line notation is $\sigma=791258364$. On applying the Robinson-Schensted variant, $\sigma$ maps to the following insertion tableau (left) and recording tableau (right).
\begin{align*}
P(\sigma)=\ytableausetup{mathmode,boxsize=1.2em}
\begin{ytableau}
1\\2\\7&5&3\\9&8&6&4
\end{ytableau}
\hspace{10mm}
Q(\sigma)=\ytableausetup{mathmode,boxsize=1.2em}
\begin{ytableau}
4\\5\\8&6&2\\9&7&3&1
\end{ytableau}
\end{align*}
Then, by Theorem \ref{theorem: jdt sequences and recording tableau}, we have that $\tau=\rho^{-1}(Q(\sigma))$.
\begin{eqnarray*}
\tau=\ytableausetup{mathmode,boxsize=1.2em}
\begin{ytableau}
4\\5\\8&7&3&1\\9&6&2
\end{ytableau}
\end{eqnarray*}
It is straightforward to check that $\jdt_3\jdt_4\jdt_1\jdt_1\jdt_2\jdt_3\jdt_1\jdt_2\jdt_1 (\varnothing)$ gives the following sequence of $\SRT$s (the bullets indicating where the next slide begins).
\begin{align*}
\ytableausetup{mathmode,boxsize=1.2em}
&\begin{ytableau}
1 & \none[\bullet]
\end{ytableau}
\rightarrow
\begin{ytableau}
\none[\bullet]\\
2 & 1
\end{ytableau}
\rightarrow
\begin{ytableau}
2\\
3 & 1 &\none[\bullet]
\end{ytableau}
\rightarrow
\begin{ytableau}
2 & \none[\bullet]\\
4 & 3&1
\end{ytableau}
\rightarrow
\begin{ytableau}
\none[\bullet]\\
4&2\\
5&3&1
\end{ytableau}
\rightarrow
\begin{ytableau}
\none[\bullet]\\
4\\
5&2\\
6&3&1
\end{ytableau}\rightarrow
\\ &\rightarrow
\begin{ytableau}
4\\
5\\
6&2\\
7&3&1 &\none[\bullet]
\end{ytableau}
\rightarrow
\begin{ytableau}
4\\
5\\
6&2 &\none[\bullet]\\
8&7&3&1
\end{ytableau}
\rightarrow
\ytableausetup{mathmode,boxsize=1.2em}
\begin{ytableau}
4\\5\\8&6&2\\9&7&3&1
\end{ytableau}
\end{align*}
Note that the final $\SRT$ obtained above is $\rho(\tau)$.
\end{Example}
In the next subsection, we will use Theorem \ref{theorem: jdt sequences and recording tableau} and the proof strategy for Theorem \ref{theorem: counting maximal chains in right pieri poset} to consider the effect of a sequence of slides starting from a nonempty $\SRCT$.
\subsection{ Inner shape after a sequence of slides}
Let $\tau_1$ be an $\SRCT$ of shape $\alpha\vDash n$, and let $T_1=\rho(\tau_1)$ be of shape $\lambda\vdash n$. Considering the execution of the evacuation action on $T_1$ as in the proof of Theorem \ref{theorem: counting maximal chains in right pieri poset}, we get a sequence of $n$ positive integers $i_n$, $i_{n-1}$,\ldots, $i_1$. This sequence is uniquely determined by $T_1$ and the word $w=i_n\cdots i_1$ is reverse lattice.
\begin{Remark}
Note that if we started with the final $\SRT$ in Example \ref{example: shape given column growth word}, and performed the procedure discussed prior to this remark, we will be obtain the same sequence of $\SRT$s in reverse and the columns to which the bullets belong are precisely the integers $i_n$ down to $i_1$.
\end{Remark}
It follows that $T_1=\jdt_{i_n}\cdots \jdt_{i_1}(\varnothing)$
which in turn implies that $\tau_1=\mu_{i_n}\cdots \mu_{i_1}(\varnothing)$.
Consider now the $\SRCT$ $\tau_2$ defined below.
\begin{align*}
\tau_2=\mu_{k_m}\cdots \mu_{k_1}(\tau_1)
\end{align*}
Let $\rho(\tau_2)$ be $T_2$. Clearly, we have that $T_2=\jdt_{k_m}\cdots\jdt_{k_1}(T_1)$. Denote by $u$ the word $k_m\cdots k_1$.

Let $v=u\cdot w$, where $\cdot$ denotes concatenation of words, and let $\pi=\std(v)$, $\sigma=\std(w)$ and $\omega=\std(u)$. Now, note that Theorem \ref{theorem: jdt sequences and recording tableau} implies that 
\begin{align*}
\tau_2=\rho^{-1}(Q(\pi)).
\end{align*}
Observe additionally that the integers $m+n$, $m+n-1,\ldots$, $n+1$ are the new entries added when computing $\mu_{k_m}\cdots \mu_{k_1}(\tau_1)$, and they form an $\SSRCT$ of size $m$ (with all entries distinct) inside $\tau_2$. But this $\SSRCT$ is obtained by performing our insertion algorithm on the prefix of length $m$ in $\pi$, and then applying $\rho^{-1}$ to the partial $Q$-tableau obtained at this stage. If one subtracts $n$ from all entries in this partial $Q$-tableau, one gets an $\SRT$ that is precisely the $Q$-tableau obtained after performing the insertion process on $\omega=\std(u)$ (recall that standardization does not alter the $Q$-tableau).

From this point on, we switch perspectives again, and on performing backward jeu de taquin slides on $\SSRCT$s, instead of filling the vacant box in the lower left corner with larger entries, we will fill them with bullets. The preceding discussion implies the following theorem.
\begin{Theorem}\label{lemma: predicting inner skew shape after slides}
Let $\tau_1$ be an $\SRCT$ of straight shape $\alpha$, and $\tau_2$ be the $\SRCT$ $\mu_{j_m}\cdots \mu_{j_1}(\ctau_1)$. Let $w=j_m\cdots j_1$ and $\omega=\std(w)\in\mathfrak{S}_m$. Let $\beta=u_{j_m}\cdots u_{j_1}(\alpha)$ and $\gamma = \sha(\rho^{-1}(Q(\omega)))$. Then $\tau_2$ is an $\SRCT$ of shape $\beta \cskew \gamma$.
\end{Theorem}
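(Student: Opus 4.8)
The plan is to organize the three facts established in the discussion preceding the statement into a determination of the outer shape, the inner shape, and the validity of $\tau_2$ as a skew $\SRCT$; the inner shape is where the real content lies. Following that discussion, write $\tau_1=\mu_{i_n}\cdots\mu_{i_1}(\varnothing)$ for the expression of $\tau_1$ coming from the evacuation of $\rho(\tau_1)$, so that $i_n\cdots i_1$ is a reverse lattice word of length $n$. Then $\tau_2=\mu_{j_m}\cdots\mu_{j_1}\mu_{i_n}\cdots\mu_{i_1}(\varnothing)$, whose associated word is the concatenation $v=w\cdot(i_n\cdots i_1)$ with $w=j_m\cdots j_1$ as its prefix; this $v$ is again reverse lattice because the whole sequence of slides builds a genuine chain up from $\varnothing$, so Theorem \ref{theorem: jdt sequences and recording tableau} gives $\tau_2=\rho^{-1}(Q(\pi))$ with $\pi=\std(v)$. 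For the outer shape I would iterate Theorem \ref{shapemu} in the large-entry bookkeeping, where every intermediate tableau is of straight shape: each slide $\mu_{j_k}$ sends the current straight shape $\delta$ to $u_{j_k}(\delta)$, so the outer shape of $\tau_2$ is $u_{j_m}\cdots u_{j_1}(\alpha)=\beta$ (equivalently this follows from the outer-shape part of Theorem \ref{theorem: backward slides for skew shape}).

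The heart of the matter is the inner shape. In the large-entry convention the $m$ corner entries created during the slides are exactly $n+1,\ldots,n+m$, and these are precisely the cells that become bullets when one passes back to the skew picture; hence $\gamma$ is the shape of the sub-tableau $R$ of $\tau_2=\rho^{-1}(Q(\pi))$ formed by the entries exceeding $n$. Since the variant RS recording entry inserted at step $i$ is $(m+n)-i+1$, the entries $n+1,\ldots,n+m$ of $Q(\pi)$ are exactly those recorded during the first $m$ insertions, so they constitute the partial recording tableau $Q_m$ obtained after inserting the length-$m$ prefix of $\pi$; and because $\rho^{-1}$ acts columnwise using only the relative order of entries, one has $R=\rho^{-1}(Q_m)$ and therefore $\sha(R)=\sha(\rho^{-1}(Q_m))$.

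It then remains to compare $Q_m$ with $Q(\omega)$, $\omega=\std(w)$. Here I would argue in three small steps: (i) the length-$m$ prefix of $v=w\cdot(i_n\cdots i_1)$ is the factor $w$, so the prefix $\pi(1)\cdots\pi(m)$ of $\pi=\std(v)$ is order-isomorphic to $w$ and hence has standardization $\omega$; (ii) the insertion paths and the successive shapes produced by the first $m$ insertions depend only on this relative order, so the box added at step $i$ is the same for the prefix of $\pi$ as for $\omega$; and (iii) the recording value placed in that box differs by the constant $(m+n-i+1)-(m-i+1)=n$. Thus $Q_m=Q(\omega)+n$ entrywise, and since $\rho^{-1}$ is unaffected by a global shift of all entries, $\sha(\rho^{-1}(Q_m))=\sha(\rho^{-1}(Q(\omega)))=\gamma$. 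Finally, $\tau_2$ is a bona fide skew $\SRCT$: it carries the $n$ distinct entries $1,\ldots,n$ (the corner-fillers $>n$ having been replaced by bullets), and the validity of its skew shape as an $\SSRCT$ is exactly what the analysis of Section \ref{section: slides for skew shape}, and in particular Theorem \ref{theorem: backward slides for skew shape}, guarantees. Assembling the outer shape $\beta$ and inner shape $\gamma$ shows $\tau_2$ has shape $\beta\cskew\gamma$.

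I expect the inner-shape identification to be the main obstacle. One must check carefully that the passage between the ``fill-with-large-entries'' and ``fill-with-bullets'' conventions genuinely matches the entries exceeding $n$ with the inner cells, and that restricting the variant RS recording tableau to a prefix---combined with the constant shift of recording values and the fact that both RS insertion and $\rho^{-1}$ see only relative order---faithfully reproduces $Q(\omega)$. The outer-shape computation and the $\SSRCT$ validity are then routine applications of results already proved.
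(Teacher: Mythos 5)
Your proposal is correct and follows essentially the same route as the paper's own argument: express $\tau_1$ as $\mu_{i_n}\cdots\mu_{i_1}(\varnothing)$ via evacuation of $\rho(\tau_1)$, concatenate the slide word with this word, apply Theorem \ref{theorem: jdt sequences and recording tableau} to write $\tau_2=\rho^{-1}(Q(\pi))$, and identify the entries exceeding $n$ with the partial recording tableau of the length-$m$ prefix of $\pi$, which after subtracting $n$ is exactly $Q(\omega)$. The extra details you supply (the reverse-lattice check for the concatenated word, the insensitivity of insertion and of $\rho^{-1}$ to a global shift of entries) are points the paper leaves implicit, so nothing in your write-up deviates from or goes beyond the published proof in substance.
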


The following corollary is obtained from the theorem above.
\begin{Corollary}\label{corollary: slides to get row/column shape}
Let $\tau_1$ be an $\SRCT$ of straight shape $\alpha$, and $\tau_2$ be the $\SRCT$ $\mu_{j_m}\cdots \mu_{j_1}(\ctau_1)$. Let $w=j_m\cdots j_1$ and $\omega=\std(w)\in\mathfrak{S}_m$. Let $\beta=u_{j_m}\cdots u_{j_1}(\alpha)$. Then $\tau_2$ is an $\SRCT$ of shape $\beta \cskew (n)$ if and only if $j_1<\cdots < j_m$, and it is an $\SRCT$ of shape $\beta\cskew (1^n)$ if and only if $j_1\geq \cdots \geq  j_m$.
\end{Corollary}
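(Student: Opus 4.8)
The plan is to derive the corollary directly from Theorem~\ref{lemma: predicting inner skew shape after slides}, which already gives that $\tau_2$ has shape $\beta\cskew\gamma$ with inner composition $\gamma=\sha(\rho^{-1}(Q(\omega)))$ and $\omega=\std(w)$. So the only thing left is to determine precisely when $\gamma$ is the single-row composition $(n)$ and when it is the single-column composition $(1^n)$; the argument amounts to passing between three equivalent pieces of data, namely the shape of $\gamma$, the shape of the variant-RS tableau $Q(\omega)$, and the monotonicity of the word $w=j_m\cdots j_1$.

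First I would use that $\rho^{-1}$ alters shape only by rearranging parts: an $\SSRT$ of partition shape $\lambda$ is sent to an $\SSRCT$ whose composition shape $\gamma$ satisfies $\widetilde{\gamma}=\lambda$. Since $\sha(Q(\omega))=\sha(P(\omega))$, this gives $\widetilde{\gamma}=\sha(P(\omega))$. Because the single-row partition $(n)$ is the unique composition sorting to itself, and likewise for the single-column partition $(1^n)$, we obtain that $\gamma=(n)$ iff $P(\omega)$ has a single row and $\gamma=(1^n)$ iff $P(\omega)$ has a single column.

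Next I would invoke the Schensted-type behaviour of the variant of the Robinson--Schensted correspondence from Algorithm~\ref{algorithm: variant RS algorithm}. Since this variant inserts into reverse tableaux, the roles of increasing and decreasing subsequences are interchanged relative to the classical statement: tracking the bumping shows that $P(\omega)$ has a single row exactly when $\omega$ is the decreasing permutation (no letter is ever bumped out of the first row) and a single column exactly when $\omega$ is the increasing permutation (every insertion bumps down by one row). I expect this to be the main obstacle, as it is the only point where the internal mechanics of the variant enter; I would either establish it by the bumping bookkeeping just indicated or appeal to the treatment of the variant in the appendix.

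Finally I would transfer the condition on $\omega=\std(w)$ back to the word $w=j_m\cdots j_1$. Standardization preserves the inversion set, so $\omega$ is decreasing iff $w$ is strictly decreasing read from left to right, that is $j_m>\cdots>j_1$, equivalently $j_1<\cdots<j_m$; and $\omega$ is increasing iff $w$ is weakly increasing read from left to right (with ties standardizing to an increasing run), that is $j_m\leq\cdots\leq j_1$, equivalently $j_1\geq\cdots\geq j_m$. Chaining these equivalences with the two reductions above yields exactly the stated descriptions of the inner shape of $\tau_2$.
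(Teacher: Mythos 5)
Your proposal is correct and takes essentially the same route as the paper: the paper derives the corollary directly from Theorem~\ref{lemma: predicting inner skew shape after slides}, leaving implicit exactly the details you supply (that $\gamma=(n)$ or $(1^n)$ forces the same partition shape for $P(\omega)$, the Schensted-type row/column characterization for the reverse-insertion variant, and the transfer through standardization). Your filled-in reasoning, including the swap of increasing/decreasing roles for reverse tableaux and the tie-breaking in standardization, is accurate.
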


$\SRT$ versions of the two results above also exist and they are also implied in the discussion earlier. We will recast the corollary above for $\SRT$s as another corollary.
\begin{Corollary}\label{lemma: SRT slides to get row/column shape}
 Let $T_1$ be an $\SRT$ of straight shape $\lambda$, and $T_2$ be the $\SRT$ $\jdt_{j_m}\cdots \jdt_{j_1}(T_1)$. Let $w=j_m\cdots j_1$ and $\omega=\std(w)\in\mathfrak{S}_m$. Let $\beta=u_{j_m}\cdots u_{j_1}(\alpha)$ and $\delta=\widetilde{\beta}$. Then $T_2$ is an $\SRT$ of shape $\delta /(n)$ if and only if $j_1<\cdots < j_m$, and it is an $\SRT$ of shape $\delta / (1^n)$ if and only if $j_1\geq \cdots \geq  j_m$.
\end{Corollary}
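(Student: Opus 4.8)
The plan is to deduce this statement from its $\SSRCT$ counterpart, Corollary~\ref{corollary: slides to get row/column shape}, by transporting everything across the generalized $\rho$ map. First I would fix an $\SRCT$ $\ctau_1=\rho^{-1}(T_1)$ of straight shape $\alpha$, where necessarily $\widetilde{\alpha}=\lambda$; this $\ctau_1$ exists and is unique because $\rho=\rho_{\varnothing}$ restricts to a bijection from $\SRCT$s of straight shape $\alpha$ onto $\SRT$s of straight shape $\lambda$. Setting $\ctau_2=\mu_{j_m}\cdots\mu_{j_1}(\ctau_1)$, the discussion preceding Theorem~\ref{lemma: predicting inner skew shape after slides}, which rests on the commuting squares of Theorems~\ref{backwardjdtslideonssrct} and~\ref{theorem: backward slides for skew shape} iterated along the sequence of slides, already records that $\rho(\ctau_2)=T_2$; that is, $T_2=\jdt_{j_m}\cdots\jdt_{j_1}(T_1)$ is exactly the $\SSRT$ attached to $\ctau_2$ under the generalized $\rho$ map.

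Next I would apply Corollary~\ref{corollary: slides to get row/column shape} directly to $\ctau_1$ and $\ctau_2$: with $\beta=u_{j_m}\cdots u_{j_1}(\alpha)$, the $\SRCT$ $\ctau_2$ has shape $\beta\cskew(n)$ precisely when $j_1<\cdots<j_m$, and shape $\beta\cskew(1^n)$ precisely when $j_1\geq\cdots\geq j_m$. It then remains to read off what the generalized $\rho$ map does to these two skew reverse composition shapes. By definition $\rho_{\gamma}$ sends an $\SSRCT$ of inner shape $\gamma$ to an $\SSRT$ of inner shape $\widetilde{\gamma}$ by sorting columns, and it always carries the outer shape $\beta$ to $\widetilde{\beta}=\delta$. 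Since $\widetilde{(n)}=(n)$ and $\widetilde{(1^n)}=(1^n)$, the image $T_2=\rho(\ctau_2)$ has shape $\delta/(n)$ in the first case and $\delta/(1^n)$ in the second, which gives the forward direction of both equivalences.

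For the converse directions I would invoke the rigidity of a single row and a single column under sorting: $(n)$ and $(1^n)$ are the only compositions whose sorting equals $(n)$ and $(1^n)$ respectively. Thus if $T_2=\rho_{\gamma}(\ctau_2)$ has inner shape $(n)$, then the inner shape $\gamma$ of $\ctau_2$ satisfies $\widetilde{\gamma}=(n)$, forcing $\gamma=(n)$, and likewise $\gamma=(1^n)$ when $T_2$ has inner shape $(1^n)$; the equivalences of Corollary~\ref{corollary: slides to get row/column shape} then return the asserted monotonicity conditions on the $j_k$. I expect the only delicate point to be precisely this bookkeeping of inner and outer shapes under $\rho_{\gamma}$—in particular checking that the row and column inner shapes are rigid under sorting, so that the ``if and only if'' transports intact—rather than any genuinely new combinatorial input, since all of the substantive content is already carried by Corollary~\ref{corollary: slides to get row/column shape}.
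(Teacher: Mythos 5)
Your argument is correct, and there is no circularity in it: Corollary \ref{corollary: slides to get row/column shape} is established in the paper without reference to the \SRT{} statement, so you may legitimately transport it back through the generalized $\rho$ map. But your route runs in the opposite direction to the paper's. The paper reads the \SRT{} version directly off the discussion preceding Theorem \ref{lemma: predicting inner skew shape after slides}: the entries added during the $m$ slides form, after subtracting the size of $T_1$, precisely the recording tableau $Q(\omega)$ sitting inside $T_2$, so the inner (partition) shape of $T_2$ is $\sha(Q(\omega))$, which is a single row iff $\omega$ is decreasing, i.e.\ iff $j_1<\cdots<j_m$, and a single column iff $\omega$ is increasing, i.e.\ iff $j_1\geq\cdots\geq j_m$; the paper then treats the corollary as a mere recasting (``implied in the discussion earlier''). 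Indeed, in the paper's logic the \SSRCT{}-level statements are themselves obtained from this \SRT{}-level analysis by applying $\rho^{-1}$, so your derivation makes a round trip: \SRT{} facts $\to$ \SRCT{} corollary $\to$ \SRT{} corollary. What your route buys is that the dependence becomes purely formal on the already-stated Corollary \ref{corollary: slides to get row/column shape}; what it costs is exactly the two pieces of bookkeeping you identified and supplied, namely that $\rho_{\gamma}$ carries an \SSRCT{} of shape $\beta\cskew\gamma$ to an \SSRT{} of shape $\widetilde{\beta}/\widetilde{\gamma}$ (column heights of a diagram are unchanged by sorting the parts), and that $(n)$ and $(1^n)$ are the unique compositions rearranging to the partitions $(n)$ and $(1^n)$, so the ``if and only if'' survives the transport. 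One small imprecision: $\rho$ is a bijection from the union of $\SRCT(\alpha)$ over all $\alpha$ with $\widetilde{\alpha}=\lambda$ onto $\SRT(\lambda)$, not from the \SRCT{}s of one fixed shape $\alpha$ onto $\SRT(\lambda)$; this does not affect your proof, since all you need is that $\alpha:=\sha(\rho^{-1}(T_1))$ is well defined, which also resolves the fact that $\alpha$ appears undefined in the statement itself.
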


Suppose now that $T$ is an $\SRT$ of shape $\delta / (n)$. Then the invertibility of the jeu de taquin slides for $\SRT$s along with the corollary above implies that $T$ has been obtained from an $\SRT$ $T'$ by doing a sequence of backward jeu de taquin slides as follows
\begin{align*}
T=\jdt_{i_n}\cdots\jdt_{i_1}(T'),
\end{align*}
where $i_n>\cdots >i_1$. Similarly, if $T$ is an $\SRT$ of shape $\delta / (1^n)$, then it has been obtained from an $\SRT$ $T'$ by doing a sequence of backward jeu de taquin slides as follows
\begin{align*}
T=\jdt_{i_n}\cdots\jdt_{i_1}(T'),
\end{align*}
where $i_n\leq \cdots \leq i_1$. Note now that the maps $\rho_{(n)}\!: \SSRT(/ (n)) \to \SSRCT(\cskew (n))$, and $\rho_{(1^n)}: \SSRT(/ (1^n)) \to \SSRCT(\cskew (1^n))$ are bijections. Using this and Theorem \ref{backwardjdtslideonssrct}, it is clear that we can apply the above arguments to $\SRCT$s. This insight is all we need to prove our right Pieri rule for the noncommutative Schur functions stated in Theorem \ref{theorem: right pieri rules}.

\section{A right Pieri rule for noncommutative Schur functions}\label{section: right pieri rule}
To recover a Pieri rule from Theorem \ref{theorem: noncommutative LR rule} that allows us to compute the products $\ncsa\cdot \ncs_{(n)}$ and $\ncsa\cdot\ncs_{(1^n)}$ we need to find all $\SRCT$s of shape $\gamma\cskew (n)$ (respectively $\gamma\cskew (1^n)$) that rectify to the canonical tableau of shape $\alpha$. In light of Corollary \ref{corollary: slides to get row/column shape} we have the following lemma.

\begin{Lemma}\label{lemma: slides in increasing order right pieri}
Let $\tau=\mu_{i_n}\cdots\mu_{i_1}(\tau_{\alpha})$, where $i_n>\cdots >i_1$. Then $\tau$ rectifies to $\tau_{\alpha}$.
\end{Lemma}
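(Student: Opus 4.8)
The plan is to reduce the statement for composition tableaux to the corresponding fact about ordinary $\SSRT$s, where the good behaviour of rectification under jeu de taquin slides is already available. First I would pass through Mason's map: set $T_{\alpha}=\rho(\tau_{\alpha})$, so that $T_{\alpha}$ is the $\SRT$ associated to the canonical tableau of shape $\alpha$. By Theorem \ref{backwardjdtslideonssrct} (the commuting square relating $\mu_i$ and $\jdt_i$ through $\rho^{-1}$), applying the sequence $\mu_{i_n}\cdots\mu_{i_1}$ to $\tau_{\alpha}$ corresponds exactly to applying $\jdt_{i_n}\cdots\jdt_{i_1}$ to $T_{\alpha}$; that is, $\rho(\tau)=\jdt_{i_n}\cdots\jdt_{i_1}(T_{\alpha})=:T$. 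Because $i_n>\cdots>i_1$, Corollary \ref{lemma: SRT slides to get row/column shape} guarantees that $T$ is an $\SRT$ of skew shape $\delta/(n)$ for the appropriate $\delta$, so the slides are all legitimate and the resulting outer/inner shapes are controlled.

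Next I would invoke the fundamental compatibility of rectification with backward jeu de taquin slides, recalled in Subsection \ref{subsection: vrsk}: if $T'$ is obtained from $T$ by a backward slide, then $\rect(T)=\rect(T')$. Applying this along the whole chain $\jdt_{i_n}\cdots\jdt_{i_1}$ gives $\rect(T)=\rect(T_{\alpha})$. Since $T_{\alpha}$ is already of straight shape, it is its own rectification, so $\rect(T)=T_{\alpha}$, i.e.\ $P(w_T)=T_{\alpha}$. Finally, recalling the definition of rectification for $\SRCT$s given just after Theorem \ref{theorem: noncommutative LR rule}---namely $\tau$ rectifies to $\tau_{\alpha}$ precisely when $\rho^{-1}(P(w_{\tau}))=\tau_{\alpha}$---and using that $T=\rho(\tau)$ so that $w_T=w_{\tau}$, I would conclude $\rho^{-1}(P(w_{\tau}))=\rho^{-1}(T_{\alpha})=\tau_{\alpha}$. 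This is exactly the assertion that $\tau$ rectifies to $\tau_{\alpha}$.

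The main subtlety to address carefully is not any single computation but making sure the translation between the two worlds lines up: one must check that $\rho$ carries the column reading word faithfully (so that $w_T=w_{\tau}$ and hence the insertion tableaux agree), and that the strictly decreasing condition $i_n>\cdots>i_1$ is genuinely what is needed for Corollary \ref{lemma: SRT slides to get row/column shape} to force the inner shape to be a single row $(n)$ rather than some other skew shape. Both of these are guaranteed by the results already established—Theorem \ref{backwardjdtslideonssrct} handles the slide-to-slide correspondence and the definition of $\rho$ handles the reading words—so the argument is essentially a bookkeeping assembly of known facts rather than a fresh combinatorial obstacle. I would therefore expect the proof to be short, with the only place demanding care being the explicit citation of the $\SSRT$ rectification invariance to collapse the entire slide sequence down to $\rect(T)=T_{\alpha}$.
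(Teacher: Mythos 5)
Your proposal is correct and follows essentially the same route as the paper's own proof: push everything through Mason's map via Theorem \ref{backwardjdtslideonssrct}, invoke invariance of $\rect$ under backward jeu de taquin slides to get $\rect(T)=\rect(T_{\alpha})=T_{\alpha}$, and translate back using the definition of rectification for $\SRCT$s. The extra details you supply (that $w_T=w_{\tau}$ since $\rho$ preserves column entry-sets, and the appeal to Corollary \ref{lemma: SRT slides to get row/column shape} for the shape) are harmless elaborations of steps the paper leaves implicit.
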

\begin{proof}
Let $T_{\alpha}=\rho(\tau_{\alpha})$ and $T=\rho_{(n)}(\tau)$. Then, by Theorem \ref{backwardjdtslideonssrct}, we have that $T=\jdt_{i_n}\cdots \jdt_{i_1}(T_{\alpha})$, which in turn implies that $\rect(T)=\rect(T_{\alpha})$ (as discussed before Algorithm \ref{algorithm:evacuation}). Therefore, we get that $\tau$ rectifies to $\tau_{\alpha}$.
\end{proof}

\begin{Remark}\label{remark: important remark}
The arguments after Corollary \ref{lemma: SRT slides to get row/column shape} imply that if there is an $\SRCT$ of shape $\gamma\cskew (n)$ that rectifies to $\tau_{\alpha}$, then it has to be obtained from $\tau_{\alpha}$ by doing a sequence of backward jeu de taquin slides from columns $i_1$, $i_{2},\cdots, i_n$ in that order, where $i_n>\cdots >i_1$.

Similarly, if there is an $\SRCT$ of shape $\gamma\cskew (1^n)$ that rectifies to $\tau_{\alpha}$, then it has to be obtained from $\tau_{\alpha}$ by doing a sequence of backward jeu de taquin slides from columns $i_1$, $i_{2},\cdots, i_n$ in that order, where $i_n\leq \cdots \leq i_1$.
\end{Remark}
We can now prove the right Pieri rule, stated in Theorem \ref{theorem: right pieri rules}.
\begin{Theorem}(Right Pieri rule)\label{theorem: right pieri rules actual proof}
Let $\alpha$ be a composition and $n$ a positive integer. Then
\begin{eqnarray*}
\ncsa\cdot \ncs_{(n)}=\displaystyle\sum_{\substack{\beta\vDash |\alpha|+n, u_{i_n}\cdots u_{i_1}(\alpha)=\beta\\i_n>\cdots >i_1}} \ncsb ,\\
\ncsa\cdot \ncs_{(1^n)}=\displaystyle\sum_{\substack{\beta\vDash |\alpha|+n, u_{i_n}\cdots u_{i_1}(\alpha)=\beta\\i_n\leq \cdots \leq i_1}} \ncsb .
\end{eqnarray*}
\end{Theorem}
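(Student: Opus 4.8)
The plan is to read off both identities directly from the noncommutative Littlewood--Richardson rule (Theorem~\ref{theorem: noncommutative LR rule}) by specializing $\beta$ to $(n)$ and to $(1^n)$, and then to reinterpret the resulting coefficients $C^{\gamma}_{\alpha,(n)}$ and $C^{\gamma}_{\alpha,(1^n)}$ through the backward jeu de taquin machinery developed above. Recall that $C^{\gamma}_{\alpha,(n)}$ counts the $\SRCT$s of shape $\gamma\cskew(n)$ that rectify to the canonical tableau $\tau_{\alpha}$, so the whole argument reduces to identifying these tableaux.

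First I would set up a bijection between $\SRCT$s of shape $\gamma\cskew(n)$ that rectify to $\tau_{\alpha}$ and strictly decreasing sequences $i_n>\cdots>i_1$ with $u_{i_n}\cdots u_{i_1}(\alpha)=\gamma$. For the forward direction, Lemma~\ref{lemma: slides in increasing order right pieri} together with Corollary~\ref{corollary: slides to get row/column shape} shows that for any such sequence the tableau $\mu_{i_n}\cdots\mu_{i_1}(\tau_{\alpha})$ is an $\SRCT$ of shape $u_{i_n}\cdots u_{i_1}(\alpha)\cskew(n)$ that rectifies to $\tau_{\alpha}$; this produces a well-defined map from sequences to tableaux. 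For the reverse direction, Remark~\ref{remark: important remark} guarantees that every $\SRCT$ of shape $\gamma\cskew(n)$ rectifying to $\tau_{\alpha}$ is of the form $\mu_{i_n}\cdots\mu_{i_1}(\tau_{\alpha})$ for some strictly decreasing $i_n>\cdots>i_1$, and the invertibility of the backward jeu de taquin slides on $\SSRT$s (applied to $\rho(\tau)$ via the evacuation procedure, exactly as in the proof of Theorem~\ref{theorem: counting maximal chains in right pieri poset}) shows that the sequence is uniquely recovered from the tableau. Hence the two maps are mutually inverse, and $C^{\gamma}_{\alpha,(n)}$ equals the number of strictly decreasing sequences with $u_{i_n}\cdots u_{i_1}(\alpha)=\gamma$.

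Substituting this count into the Littlewood--Richardson expansion $\ncsa\cdot\ncs_{(n)}=\sum_{\gamma}C^{\gamma}_{\alpha,(n)}\ncsg$ immediately yields the first identity, since summing $\ncsg$ over all $\gamma$ with multiplicity equal to the number of qualifying sequences is the same as summing $\ncsb$ over all pairs consisting of a composition $\beta$ and a strictly decreasing sequence realizing it. The second identity is entirely parallel: one specializes $\beta=(1^n)$ and invokes the $\cskew(1^n)$ half of Corollary~\ref{corollary: slides to get row/column shape} and the second paragraph of Remark~\ref{remark: important remark}, where the strictly decreasing condition is replaced by the weakly decreasing condition $i_n\leq\cdots\leq i_1$.

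Finally, to justify that these expansions are genuinely multiplicity-free I would show that each composition $\beta$ admits at most one qualifying sequence. By Remark~\ref{remark: up operator on partition} each application of $u_i$ adds a box in column $i$ of the underlying partition, so a strictly decreasing sequence builds the horizontal strip $\widetilde{\beta}/\widetilde{\alpha}$ while a weakly decreasing sequence builds the vertical strip $\widetilde{\beta}/\widetilde{\alpha}$; in either case the multiset of occupied columns is determined by $\widetilde{\beta}$ and $\widetilde{\alpha}$, and the monotonicity constraint forces a unique ordering of that multiset, so $C^{\gamma}_{\alpha,(n)},C^{\gamma}_{\alpha,(1^n)}\in\{0,1\}$. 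I expect the main obstacle to be purely bookkeeping: carefully matching the order reversal between the slide indices in $\mu_{i_n}\cdots\mu_{i_1}$ and the column reading of the evacuation sequence, and confirming that uniqueness of the recovered sequence in the reverse direction is precisely the uniqueness furnished by invertibility of jeu de taquin. The genuinely substantive work has already been carried out in Theorem~\ref{backwardjdtslideonssrct}, Corollary~\ref{corollary: slides to get row/column shape}, and Remark~\ref{remark: important remark}, so this final argument is essentially an assembly of those pieces.
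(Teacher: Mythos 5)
Your proposal is correct and takes essentially the same route as the paper's own proof: both specialize the noncommutative Littlewood--Richardson rule (Theorem~\ref{theorem: noncommutative LR rule}) and identify the coefficients $C_{\alpha(n)}^{\gamma}$ and $C_{\alpha(1^n)}^{\gamma}$ with strictly, respectively weakly, decreasing slide sequences by combining Lemma~\ref{lemma: slides in increasing order right pieri}, Corollary~\ref{corollary: slides to get row/column shape}, Remark~\ref{remark: important remark}, the invertibility of jeu de taquin, and Remark~\ref{remark: up operator on partition} for multiplicity-freeness. The only difference is presentational: you package these facts as an explicit bijection between sequences and tableaux, whereas the paper argues injectivity (distinct sequences give distinct tableaux with distinct outer shapes) and surjectivity separately.
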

\begin{proof}
We will only give the proof for $\ncsa\cdot \ncs_{(n)}$ as the proof of $\ncsa\cdot \ncs_{(1^n)}$ is similar. Notice first that if we have two distinct sequences $i_n>\cdots >i_1$ and $j_n>\cdots >j_1$ such that 
\begin{align*}
\tau_1=\mu_{i_n}\cdots \mu_{i_1}(\tau_{\alpha}),\\
\tau_2=\mu_{j_n}\cdots \mu_{j_1}(\tau_{\alpha}),
\end{align*}
then $\tau_1$ and $\tau_2$ are distinct $\SRCT$s. This follows from the reversibility of backward jeu de taquin slides for $\SRT$s, once we consider the images of $\tau_1$ and $\tau_2$ under the generalized $\rho$ map of Subsection \ref{neededlater}. By Corollary \ref{corollary: slides to get row/column shape}, we get that 
\begin{align*}
\sha(\tau_1)=\gamma_1\cskew (n) \text{ where } \gamma_1=u_{i_n}\cdots u_{i_1}(\alpha),\\
\sha(\tau_2)=\gamma_2\cskew (n)\text{ where } \gamma_2=u_{j_n}\cdots u_{j_1}(\alpha).
\end{align*}
We claim that $\gamma_1\neq \gamma_2$. But in view of Remark \ref{remark: up operator on partition} and given that the sequences $i_n>\cdots >i_1$ and $j_n>\cdots >j_1$ are distinct, this immediately follows. Using Remark \ref{remark: important remark}, we get that all summands $\ncsg$ in $\ncsa\cdot\ncs_{(n)}$ are obtained by picking a sequence $i_n>\cdots >i_1$ such that $\mu_{i_n}\cdots \mu_{i_1}(\tau_{\alpha})$ is an $\SRCT$, and then letting $\gamma$ be the outer shape of this $\SRCT$. This gives us a multiplicity free expansion in the noncommutative Schur basis for the product $\ncsa\cdot\ncs_{(n)}$ as stated in the claim.
\end{proof}

We conclude by proving a generalization of Theorem \ref{theorem: counting maximal chains in right pieri poset}. Instead of counting maximal chains starting from $\varnothing$ and ending at some composition $\beta$, we consider maximal chains starting from a composition $\alpha$ and ending at a composition $\beta$ where $\alpha < _{r} \beta$.
\begin{Theorem}\label{theorem: generalized counting maximal chains in right pieri poset}
Let $f_{\alpha,\beta}$ denote the number of maximal chains from $\alpha$ to $\beta$ in $\mathcal{R}_c$. Then
\begin{eqnarray*}
f_{\alpha,\beta}=\displaystyle\sum_{\gamma \vDash |\beta|-|\alpha|}C_{\alpha\gamma}^{\beta}f_{\varnothing,\gamma}.
\end{eqnarray*}
\end{Theorem}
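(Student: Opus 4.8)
The plan is to run a counting argument built around the canonical tableau $\tau_{\alpha}$, grouping the skew tableaux produced by sequences of backward slides according to their inner shape. Fix the canonical tableau $\tau_{\alpha}$ of shape $\alpha$, set $T_{\alpha}=\rho(\tau_{\alpha})$, and write $m=|\beta|-|\alpha|$. A maximal chain from $\alpha$ to $\beta$ in $\mathcal{R}_{c}$ is exactly a sequence $(i_1,\ldots,i_m)$ of positive integers with $\up_{i_m}\cdots\up_{i_1}(\alpha)=\beta$, every intermediate application being valid. To such a chain I associate the $\SRCT$ $\tau_2=\mu_{i_m}\cdots\mu_{i_1}(\tau_{\alpha})$. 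By Theorem \ref{lemma: predicting inner skew shape after slides}, $\tau_2$ has shape $\beta\cskew\gamma$ with $\gamma=\sha(\rho^{-1}(Q(\std(i_m\cdots i_1))))$, so in particular $|\gamma|=m$. Moreover $\tau_2$ rectifies to $\tau_{\alpha}$: applying Theorem \ref{backwardjdtslideonssrct} repeatedly gives $\rho_{\gamma}(\tau_2)=\jdt_{i_m}\cdots\jdt_{i_1}(T_{\alpha})$, and since rectification of $\SRT$s is invariant under backward jeu de taquin slides we get $\rect(\rho_{\gamma}(\tau_2))=\rect(T_{\alpha})=T_{\alpha}$; as $\rho$ preserves column reading words, this is precisely the statement that $\tau_2$ rectifies to $\tau_{\alpha}$.

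The crux is to count, for a fixed $\SRCT$ $\tau_2$ of shape $\beta\cskew\gamma$ that rectifies to $\tau_{\alpha}$, how many chains map to it, i.e. the number of sequences $(i_1,\ldots,i_m)$ with $\mu_{i_m}\cdots\mu_{i_1}(\tau_{\alpha})=\tau_2$. Passing through $\rho$ again, this equals the number of sequences with $\jdt_{i_m}\cdots\jdt_{i_1}(T_{\alpha})=T_2$, where $T_2=\rho_{\gamma}(\tau_2)$ is an $\SRT$ of shape $\widetilde{\beta}/\widetilde{\gamma}$ whose rectification is $T_{\alpha}$. Here I would invoke reversibility of jeu de taquin: each such backward sequence is the reverse of a sequence of forward slides that peels the inner shape $\widetilde{\gamma}$ of $T_2$ box by box down to the straight shape $T_{\alpha}$; and because rectification does not depend on the order of slides, every admissible peeling order terminates at $T_{\alpha}$, so every peeling order occurs. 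A peeling order is a maximal chain from $\varnothing$ to $\widetilde{\gamma}$ in Young's lattice $\mathcal{Y}$ read in reverse, and such chains are in bijection with $\SRT$s of shape $\widetilde{\gamma}$ via the column growth word correspondence $w\mapsto T_w$. Finally $\rho$ restricts to a bijection between $\SRCT$s of shape $\gamma$ and $\SRT$s of shape $\widetilde{\gamma}$, and the number of the former is $f_{\varnothing,\gamma}$ by Theorem \ref{theorem: counting maximal chains in right pieri poset}. Hence exactly $f_{\varnothing,\gamma}$ chains map to each such $\tau_2$.

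With these two steps, the identity assembles cleanly. Partition the set of maximal chains from $\alpha$ to $\beta$ according to the $\SRCT$ $\tau_2$ each chain produces; every class corresponds to an $\SRCT$ of some shape $\beta\cskew\gamma$ rectifying to $\tau_{\alpha}$ and, by the previous paragraph, has size $f_{\varnothing,\gamma}$. By Theorem \ref{theorem: noncommutative LR rule} the number of such $\tau_2$ for a fixed $\gamma$ is $C_{\alpha\gamma}^{\beta}$, and $f_{\varnothing,\gamma}>0$ guarantees each such $\tau_2$ is actually attained. Summing over $\gamma\vDash|\beta|-|\alpha|$ gives $f_{\alpha,\beta}=\sum_{\gamma}C_{\alpha\gamma}^{\beta}f_{\varnothing,\gamma}$, as claimed.

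I expect the main obstacle to be the fixed-$\tau_2$ count in the second paragraph. The delicate points are to argue rigorously via reversibility that backward-slide sequences producing $T_2$ are in bijection with orderings of the boxes of $\widetilde{\gamma}$ all of whose prefixes are partitions (equivalently $\SRT$s of shape $\widetilde{\gamma}$), and to invoke order-independence of rectification so that every such ordering is legitimate and lands on $T_{\alpha}$. The remaining ingredients—the shape and rectification statements for $\tau_2$, the $\rho$-bijection $\SRCT(\gamma)\leftrightarrow\SRT(\widetilde{\gamma})$, and the enumeration $f_{\varnothing,\gamma}$—are either immediate or already available from the cited results.
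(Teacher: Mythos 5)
Your overall skeleton matches the paper's: fiber the maximal chains from $\alpha$ to $\beta$ over the skew $\SRCT$ $\tau_2$ they produce, count the possible $\tau_2$ of shape $\beta\cskew\gamma$ rectifying to $\tau_{\alpha}$ by $C_{\alpha\gamma}^{\beta}$ via Theorem \ref{theorem: noncommutative LR rule}, and show each fiber has exactly $f_{\varnothing,\gamma}$ elements. The asserted fiber size is correct, but your justification of it rests on two false statements that happen to cancel. First, the reduction ``the number of sequences with $\mu_{i_m}\cdots\mu_{i_1}(\tau_{\alpha})=\tau_2$ equals the number of sequences with $\jdt_{i_m}\cdots\jdt_{i_1}(T_{\alpha})=T_2$'' holds only as an inclusion. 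Distinct peeling orders of the same $T_2$ generally produce \emph{distinct} $\SRCT$s: by Theorem \ref{lemma: predicting inner skew shape after slides}, the chain with word $w$ produces an $\SRCT$ whose inner composition shape is $\sha(\rho^{-1}(Q(\std(w))))$, and this varies with $w$ (the outer composition $u_{i_m}\cdots u_{i_1}(\alpha)$ varies correspondingly); all of these $\SRCT$s map to the same $T_2$, but under \emph{different} maps $\rho_{\gamma'}$. The number of $\jdt$-sequences landing on $T_2$ is the number of $\SRT$s of shape $\widetilde{\gamma}$, namely $\sum_{\widetilde{\delta}=\widetilde{\gamma}}f_{\varnothing,\delta}$, which strictly exceeds $f_{\varnothing,\gamma}$ whenever $\widetilde{\gamma}$ has more than one composition rearrangement. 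Second, and compensating, the claim that ``$\rho$ restricts to a bijection between $\SRCT$s of shape $\gamma$ and $\SRT$s of shape $\widetilde{\gamma}$'' is false: $\rho$ is a bijection from $\bigsqcup_{\widetilde{\delta}=\widetilde{\gamma}}\SRCT(\delta)$ onto $\SRT(\widetilde{\gamma})$. For instance $\widetilde{\gamma}=(2,1)$ has two $\SRT$s, while there is exactly one $\SRCT$ of shape $(1,2)$ and one of shape $(2,1)$.

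The gap is repairable with tools you already cite, and doing so recovers, in substance, the paper's argument. The peeling orders $w$ of $T_2$ biject with the $\SRT$s $Q=Q(\std(w))$ of shape $\widetilde{\gamma}$, and by Theorem \ref{lemma: predicting inner skew shape after slides} the chain with word $w$ lands on $\tau_2$ if and only if $\sha(\rho^{-1}(Q))=\gamma$: in that case the produced $\SRCT$ equals $\rho_{\gamma}^{-1}(T_2)=\tau_2$ by injectivity of $\rho_{\gamma}$, which in particular forces its outer composition to be $\beta$; otherwise the produced $\SRCT$ has a different shape and is not $\tau_2$. Since $\rho^{-1}$ restricted to $\SRT(\widetilde{\gamma})$ is a bijection onto $\bigsqcup_{\widetilde{\delta}=\widetilde{\gamma}}\SRCT(\delta)$, the number of admissible $Q$ is the number of $\SRCT$s of shape $\gamma$, which is $f_{\varnothing,\gamma}$ by Theorem \ref{theorem: counting maximal chains in right pieri poset}, and it is nonzero because the canonical tableau $\tau_{\gamma}$ always exists, so every $\tau_2$ is attained. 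This restricted count is exactly what the paper's proof encodes when it parameterizes the fiber by fillings of the inner shape $\gamma$ with the large entries (the augmented tableau $\tau_{aug}$), i.e.\ by $\SRCT$s of shape $\gamma$ rather than by all peeling orders of $T_2$.
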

\begin{proof}
Let $|\alpha|=m$ and $|\beta|=n+m$.
If $\alpha<_{r}\beta$, then there is at least one sequence of positive integers $i_n,\cdots,i_1$ such that $u_{i_n}\cdots u_{i_1}(\alpha)=\beta$. Let $w=i_n\cdots i_1$ consider the $\SRCT$ $\tau=\mu_{i_n}\cdots\mu_{i_1}(\tau_{\alpha})$. Then $\sha(\tau)=\beta\cskew \gamma$ where $\gamma=\sha (\rho^{-1}(Q(\std(w))))$, by Theorem \ref{lemma: predicting inner skew shape after slides}. Note also that $\tau$ rectifies to $\tau_{\alpha}$, by an argument similar to the proof of Lemma \ref{lemma: slides in increasing order right pieri}.

As stated in Theorem \ref{theorem: noncommutative LR rule}, we have that
\begin{eqnarray*}
C_{\alpha\gamma}^{\beta}= \text{ number of $\SRCT$s of shape $\beta\cskew \gamma$ that rectify to $\tau_{\alpha}$}.
\end{eqnarray*}
Now consider any skew $\SRCT$ $\tau$ of shape $\beta\cskew \gamma$ that rectifies to $\tau_{\alpha}$. This can be completed to an $\SRCT$ $\tau_{aug}$ of straight shape $\beta$ where the inner shape corresponding to $\gamma$ has been filled according to an $\SSRCT$ of shape $\gamma$ with distinct entries from the set $B=\{n+m,n+m-1,\ldots,m+1\}$. Now, notice that every choice of the filling of shape $\gamma$ gives a unique maximal chain from $\alpha$ to $\beta$. To see this, perform a forward jeu de taquin slide starting from the lower left corner of $\rho(\tau_{aug})$. As long as the resulting tableau has elements from $B$, repeat the previous step. Notice that the procedure above stops when we reach $\rho(\tau_{\alpha})$. Reversing these slides starting from $\rho ({\tau_{\alpha}})$, and using the generalized $\rho$ map, defined in Subsection \ref{neededlater}, at every step gives us the unique maximal chain from $\alpha$ to $\beta$.

Using the fact that the number of $\SRCT$s of shape $\gamma$ is $f_{\varnothing,\gamma}$, as established in Theorem \ref{theorem: counting maximal chains in right pieri poset}, the claim follows.
\end{proof}
\begin{Remark}
It is worth emphasizing that $f_{\alpha,\beta}$ is not the number of $\SRCT$ of shape $\beta\cskew \alpha$ in general, although it is true in the case where $\alpha$ is $\varnothing$ by Theorem \ref{theorem: counting maximal chains in right pieri poset}. The above theorem can be considered as a noncommutative generalization of extracting coefficients in the classical identity $s_{\lambda/\mu}=\displaystyle\sum_{\nu\vdash |\lambda|-|\mu|}c_{\mu\nu}^{\lambda}s_{\nu}$.
\end{Remark}

\appendix
\section*{Appendix}
We will outline the relation between our variant of the Robinson-Schensted algorithm and the classical Robinson-Schensted algorithm. The latter assigns to a permutation $\sigma\in \mathfrak{S}_n$ a pair of standard Young tableaux of the same shape and size equalling $n$. We begin by defining standard Young tableaux.

Given a partition $\lambda$, a \emph{standard Young tableau (SYT)} $T$ of \emph{shape} $\lambda$ is a filling of the boxes of $\lambda$ with distinct positive integers from 1 to $\vert \lambda \vert$, satisfying the condition that the entries in $T$ are strictly increasing along each row read from left to right and strictly increasing along each column read from bottom to top.

The classical Robinson-Schensted algorithm and evacuation action for SYTs (which we will denote by $\evac$) are discussed in detail in \cite{sagan, stanley-ec2} and we will not present them here. To link the classical algorithms for SYTs with the variant for $\SRT$s outlined in Subsection \ref{subsection: vrsk}, we will need the notion of the complement of a permutation, and the complement of an SYT/$\SRT$.

Given a permutation $\sigma = \sigma(1)\cdots \sigma(n)$ in single line notation, the \textit{complement} of $\sigma$, denoted by $\sigma^{c}$ is obtained by replacing $\sigma(i)$ by $n+1-\sigma(i)$ for $1\leq i\leq n$. Similarly, given an SYT ($\SRT$) $T$ of shape $\lambda\vdash n$, the \textit{complement} of $T$ is the $\SRT$ (SYT), denoted by $T^{c}$, obtained by replacing an entry $k$ in $T$ by $n+1-k$. Finally, define $T^t$ to be the transpose of any tableau $T$.  

Observe that the complement and transpose operators on SYTs/$\SRT$s commute. Furthermore, both the $\evac$ operator on $SYTs$ and the $e$ operator on $\SRT$s commute with the transpose operator. Finally, note that
\begin{align*}
e(T)=(\evac(T^c))^c \text{ if $T$ is an $\SRT$,}\\
\evac(T)=(e(T^c))^c \text{ if $T$ is an SYT.}
\end{align*}
Since $\evac$ is an involution by \cite[Proposition A.1.2.9]{stanley-ec2}, we can see that $e$ is also an involution.

Now we are ready to make explicit the relation between the classical algorithm for SYTs and the one for $\SRT$s. Let $\pi\in \mathfrak{S}_n$. We will denote the insertion and recording tableaux obtained by using the classical Robinson-Schensted correspondence by $P(\pi)$ and $Q(\pi)$ respectively, and denote those obtained by using our variant by $P_{v}(\pi)$ and $Q_{v}(\pi)$ respectively. 

Observe that 
\begin{align*}
(P_{v}(\pi), Q_{v}(\pi)) = ((P(\pi^{c}))^{c}, (Q(\pi^{c}))^{c}).
\end{align*}
Further using \cite[Theorem A.1.2.10 and Corollary A.1.2.11]{stanley-ec2} and the fact that $\evac$ is an involution, we get that 
\begin{align*}
(P(\pi^{c}), Q(\pi^{c}) )=(\evac(P(\pi)^t),Q(\pi)^t).
\end{align*}
The two equalities above together imply that
\begin{align*}
(P_{v}(\pi), Q_{v}(\pi))=((\evac(P(\pi)^t))^c,(Q(\pi)^t)^c).
\end{align*}

Using \cite[Corollary A.1.2.11]{stanley-ec2} and the fact that $P(\pi^{-1})=Q(\pi)$ we get that
\begin{align*}
P_{v}(\pi^{-1})&=(\evac(P(\pi^{-1})^t))^c\\&=(\evac(Q(\pi)^t))^c.
\end{align*}
Since $(Q_{v}(\pi))^c=Q(\pi)^t$, we get that
\begin{align*}
P_v(\pi^{-1})&=(\evac(Q_v(\pi)^c))^c\\&=e(Q_v(\pi)).
\end{align*}

Again, using \cite[Corollary A.1.2.11]{stanley-ec2} and the fact that $Q(\pi^{-1})=P(\pi)$ we get that
\begin{align*}
Q_v(\pi^{-1}) &= (Q(\pi^{-1})^t)^c\\&=(P(\pi)^t)^c.
\end{align*}
Now, since $P(\pi)^t=\evac(P_v(\pi)^c)$, we get that
\begin{align*}
Q_v(\pi^{-1}) &= (\evac(P_v(\pi)^c))^c\\&=e(P_v(\pi)).
\end{align*}
This establishes Lemma \ref{lemma: basic properties of RSK}.


\begin{thebibliography}{99}
\bibitem{Bechard}
{\sc M.~Bechard},
{\em {Rectification of Composition Tableaux}}, arxiv preprint, \url{http://arxiv.org/abs/1201.4502}.

\bibitem{BLvW}
{\sc C.~Bessenrodt, K.~ Luoto, and S.~van Willigenburg}, {\em {Skew quasisymmetric Schur functions and noncommutative Schur functions}}, Adv. Math., 226 (2011), pp.~4492--4532.


\bibitem{fulton-1}
{\sc W.~Fulton}, {\em {Young Tableaux}}, L.M.S Student Texts, Vol. 35, Cambridge University Press, Cambridge, 1997.

\bibitem{GKLLRT}
{\sc I.~Gelfand, D.~Krob, A.~Lascoux, B.~Leclerc, V.~Retakh, and J.-Y. Thibon},
  {\em {Noncommutative symmetric functions}}, Adv. Math., 112 (1995),
  pp.~218--348.

\bibitem{HHL}
{\sc J.~Haglund, M.~Haiman, and N.~Loehr}, {\em {A combinatorial formula for
  Macdonald polynomials}}, J. Amer. Math. Soc., 18 (2005), pp.~735--761.

\bibitem{HLMvW}
{\sc J.~Haglund, K.~Luoto, S.~Mason, and S.~van Willigenburg}, {\em
  {Quasisymmetric {S}chur functions}}, J. Combin. Theory Ser. A, 118 (2011), pp.~463--490. 
 
 \bibitem{Haiman} 
 {\sc M.~Haiman},
 {\em {On mixed insertion, symmetry, and shifted {Y}oung tableaux}}, Journal of Combinatorial Theory. Series A, 50 (1989), pp.~196--225.

\bibitem{LascouxSchutzenberger}
{\sc A.~Lascoux and M.-P.~Sch{\"u}tzenberger}
{\em {Le mono\"\i de plaxique, Noncommutative structures in algebra and geometric
              combinatorics ({N}aples, 1978)}}, Quad. ``Ricerca Sci.'' Vol. 109, CNR, Rome, 1981, pp.~129--156.
              
\bibitem{vanLeeuwen}
{\sc M.~van Leeuwen},
 {\em{An analogue of jeu de taquin for {L}ittelmann's crystal paths}}, S\'em. Lothar. Combin., 41 (1998).
  
\bibitem{LMvW}
{\sc K.~Luoto, S.~Mykytiuk, and S.~van Willigenburg}, {\em { An introduction to quasisymmetric Schur functions. Hopf algebras, quasisymmetric functions, and Young composition tableaux.}}, Springer Briefs in Mathematics, Springer, New York, 2013.

\bibitem{macdonald-1}
{\sc I.~Macdonald}, {\em {Symmetric functions and Hall polynomials. 2nd ed.}},
  {Oxford University Press}, 1998.

\bibitem{Mason-rho}
{\sc S.~Mason},
{\em {A decomposition of {S}chur functions and an analogue of the
              {R}obinson-{S}chensted-{K}nuth algorithm}}, S\'em. Lothar. Combin., 57 (2008).

\bibitem{PonWang}
{\sc S.~Pon and Q.~Wang},
{\em {Promotion and evacuation on standard {Y}oung tableaux of
              rectangle and staircase shape}}, Electron. J. Combin., 18 (2011).

 \bibitem{Proctor}
 {\sc R.~Proctor},
 {\em {d-Complete posets Generalize Young Diagrams for the Jeu de Taquin Property}}, arxiv preprint, \url{http://arxiv.org/abs/0905.3716}. 
  
 \bibitem{RN}
 {\sc L.~Riegler and C.~Neumann},
 {\em {Playing jeu de taquin on d-complete posets}}, arxiv preprint, \url{http://arxiv.org/abs/1401.3619}.
  
\bibitem{Sagan-2}
{\sc B.~Sagan},
{\em {Shifted tableaux, {S}chur {$Q$}-functions, and a conjecture of
              {R}. {S}tanley}}, J. Combin. Theory Ser. A, 45 (1987), pp.~62--103.
              
 \bibitem{sagan}
{\sc B.~Sagan}, {\em {The symmetric group. Representations, combinatorial
  algorithms, and symmetric functions. 2nd ed.}}, Springer, 2001.

\bibitem{Schutzenberger}
{\sc M.-P.~Sch{\"u}tzenberger},
{\em {La correspondance de {R}obinson, Combinatoire et repr\'esentation du groupe sym\'etrique
              ({A}ctes {T}able {R}onde {CNRS}, {U}niv. {L}ouis-{P}asteur
              {S}trasbourg, {S}trasbourg, 1976)}}, Lecture Notes in Math. Vol. 579, Springer, Berlin, 1977, pp. ~59--113.
 
\bibitem{sniady}
{\sc P.~{\'S}niady},
{\em {Robinson-{S}chensted-{K}nuth algorithm, jeu de taquin, and
              {K}erov-{V}ershik measures on infinite tableaux}}, SIAM J. Discrete Math., 28 (2014), pp.~598--630.
   
              
\bibitem{stanley-ec2}
{\sc R.~Stanley}, {\em {Enumerative
  Combinatorics}}, vol.~2, Cambridge University Press, 1999.

\bibitem{Stanley-1}
{\sc R.~Stanley},
{\em {Promotion and evacuation}}, Electron. J. Combin., 16 (2009).  
      
\bibitem{TY-1}
{\sc H.~Thomas and A.~Yong},
{\em {A jeu de taquin theory for increasing tableaux, with
              applications to {$K$}-theoretic {S}chubert calculus}}, Algebra \& Number Theory, 3 (2009), pp. ~121--148.
              
 \bibitem{TY-2}
 {\sc H.~Thomas and A. Yong},
 {\em {Equivariant Schubert calculus and jeu de taquin}}, arxiv preprint, \url{http://arxiv.org/abs/1207.3209}.             
\end{thebibliography}
\end{document}